\documentclass[10pt]{article}\synctex=1 
\usepackage{amsfonts}
\usepackage{amssymb}
\usepackage{amsmath}
\usepackage{amsthm}
\usepackage{geometry}
\usepackage{commath}
\usepackage{booktabs}
\usepackage{lscape}
\usepackage{color}
\usepackage{colortbl}
\geometry{letterpaper,left=1.0in,right=1.0in,top=1.2in,bottom=1.0in}
\usepackage{xr}
\bibliographystyle{ecta}
\usepackage[authoryear]{natbib}

\newcommand{\myref}[2]{\hyperref[#1]{#2}}
\numberwithin{equation}{section}

\usepackage{latexsym}
\usepackage{graphicx} 
\usepackage{enumerate}
\usepackage{setspace}
\onehalfspacing
\usepackage[toc,title,titletoc,header]{appendix}
\usepackage[bottom]{footmisc}   
\usepackage[pdfborder={0 0 0}]{hyperref}
\definecolor{fede_color}{cmyk}{0,0.9,0.33,.22}

\interfootnotelinepenalty=10000
\usepackage{lineno}
\setpagewiselinenumbers

\newtheorem{theorem}{Theorem}[section]
\newtheorem{lemma}{Lemma}[section]

\theoremstyle{definition}

\theoremstyle{remark}

\newtheorem{algorithm}{Algorithm}[section]
\newcounter{assumptionM}
\newcounter{assumptionA}
\def\theassumptionM{M.\arabic{assumptionM}}
\def\theassumptionA{A.\arabic{assumptionA}}
\setcounter{assumptionM}{0}
\setcounter{assumptionA}{0}
\newenvironment{assumptionA}[1][]{\refstepcounter{assumptionA}\medskip\noindent{\textbf{Assumption \theassumptionA. #1}}}{\medskip}

\newcommand{\sgn}{\mathrm{sign}}

\hypersetup{
  colorlinks=true,linkcolor=blue, citecolor=blue, filecolor=blue, 
  linkbordercolor={0 1 1}, citebordercolor={1 0 0},
  pdftitle={Inference in partially identified models with many moment inequalities using Lasso},
  pdfauthor={Bugni, Caner, Kock, and Lahiri},
  pdfcreator={\LaTeX\ with package \flqq hyperref\frqq},
  pdfsubject={},
}

\defcitealias{chernozhukov/chetverikov/kato:2014c}{CCK18}

\begin{document}
\sloppy

\title{
Inference in partially identified models with many moment inequalities using Lasso\thanks{We thank the editor and two anonymous referees for comments and suggestions that have greatly improved this manuscript. We have also benefited from comments and suggestions from the participants of the 2015 World Congress in Montreal, the Second International Workshop in Financial Econometrics, and seminars at University of Maryland, Yale University, and McGill University. Of course, any and all errors are our own. Bugni acknowledges support by NIH Grant 40-4153-00-0-85-399 and NSF Grant SES-1729280. Bredahl Kock acknowledges support from CREATES - Center for Research in Econometric Analysis of Time Series (DNRF78), funded by the Danish National Research Foundation. Lahiri acknowledges support from National Science Foundation under grant no.\ DMS 130068.}
}

\author{
Federico A. Bugni\\
Department of Economics\\
Duke University
\and
Mehmet Caner\\
Department of Economics\\
North Carolina State University
\and
Anders Bredahl Kock\\
Department of Economics \& Business\\
University of Oxford
\and
{Soumendra Lahiri}\\
Department of Statistics\\
North Carolina State University
}

\maketitle

\begin{abstract}
	This paper considers inference in a partially identified moment (in)equality model with many moment inequalities. We propose a novel two-step inference procedure that combines the methods proposed by \cite{chernozhukov/chetverikov/kato:2014c} (\citetalias{chernozhukov/chetverikov/kato:2014c}, hereafter) with a first step moment inequality selection based on the Lasso. Our method controls asymptotic size uniformly, both in underlying parameter and data distribution. Also, the power of our method compares favorably with that of the corresponding two-step method in \citetalias{chernozhukov/chetverikov/kato:2014c} for large parts of the parameter space, both in theory and in simulations. Finally, we show that our Lasso-based first step can be implemented by thresholding standardized sample averages, and so it is straightforward to implement.

 \vspace{0.3in}

\noindent \textit{Keywords and phrases}: 
Many moment inequalities, self-normalizing sum, multiplier bootstrap, empirical bootstrap, Lasso, inequality selection.

\noindent \textit{\medskip \noindent JEL classification}: C13, C23, C26.
\end{abstract}



\newpage
\section{Introduction}

This paper contributes to the growing literature on inference in partially identified econometric models defined by many unconditional moment (in)equalities, i.e.,\ inequalities and equalities. Consider an economic model with a parameter $\theta $ belonging to a parameter space $\Theta $, whose main prediction is that the true value of $\theta $, denoted by $ \theta _{0}$, satisfies a collection of moment (in)equalities. This model is partially identified, i.e., the restrictions of the model do not necessarily restrict $\theta _{0}$ to a single value, but rather they constrain it to belong to a certain set, called the identified set. The literature on partially identified models discusses several examples of economic models that satisfy this structure, such as selection problems, missing data, or multiplicity of equilibria (see, e.g., \cite{manski:1995} and \cite{tamer:2003}).

The first contributions in the literature of partially identified moment (in)equalities focus on the case in which there is a fixed and finite number of moment (in)equalities, both unconditionally\footnote{These include \cite{CHT07}, \cite{andrews/berry/jiabarwick:2004}, \cite{imbens/manski:2004}, \citet{galichon/henry:2006,galichon/henry:2013}, \cite{beresteanu/molinari:2008}, \cite{romano/shaikh:2008}, \cite{rosen:2008}, \cite{andrews/guggenberger:2009}, \cite{stoye:2009}, \cite{andrews/soares:2010}, \citet{bugni:2010,bugni:2015}, \cite{canay:2010}, \cite{romano/shaikh:2010}, \cite{andrews/jiabarwick:2012}, \cite{bontemps/magnac/maurin:2012}, \cite{bugni/canay/guggenberger:2012}, \cite{romano/shaikh/wolf:2014}, and \cite{pakes/porter/ho/ishii:2015}, among others.} and conditionally\footnote{These include \cite{kim:2008}, \cite{ponomareva:2010}, \citet{armstrong:2014,armstrong:2015}, \cite{chetverikov:2013}, \cite{andrews/shi:2013}, and \cite{chernozhukov/lee/rosen:2013}, among others.}. In practice, however, there are many relevant econometric models that produce a large set of moment conditions (even infinitely many). As the literature shows (e.g.\ \citet{menzel:2009,menzel:2014}), the associated inference problems cannot be properly addressed by an asymptotic framework with a fixed number of moment (in)equalities.\footnote{As pointed out by \cite{chernozhukov/chetverikov/kato:2014c}, this is true even for conditional moment (in)equality models (which typically produce an infinite number of unconditional moment (in)equalities). As they explain, the unconditional moment (in)equalities generated by conditional moment (in)equality models inherit the structure from the conditional moment conditions, which limits the underlying econometric model.} To address this issue, \cite{chernozhukov/chetverikov/kato:2014c} (hereafter referred to as \citetalias{chernozhukov/chetverikov/kato:2014c}) obtain inference results in a partially identified model with \textit{many} moment inequalities.\footnote{See also the related technical contributions in \citet{chernozhukov/chetverikov/kato:2013a,chernozhukov/chetverikov/kato:2013b,chernozhukov/chetverikov/kato:2014a,chernozhukov/chetverikov/kato:2014b}.} According to this asymptotic framework, the number of moment inequalities, denoted by $p$, is allowed to be larger than the sample size $n$. In fact, the asymptotic framework allows $p$ to be an increasing function of $n$ and even to grow at exponential rates. Furthermore, \citetalias{chernozhukov/chetverikov/kato:2014c} allow their moment inequalities to be ``unstructured'', in the sense that they do not impose restrictions on the correlation structure of the sample moment conditions.\footnote{This feature distinguishes their framework from a standard conditional moment (in)equality model. While conditional moment conditions can generate an uncountable set of unconditional moment (in)equalities, their covariance structure is greatly restricted by the conditioning structure.} For these reasons, \citetalias{chernozhukov/chetverikov/kato:2014c} represents a significant advancement relative to the previous literature on inference in moment inequalities. In this paper, we generalize their econometric framework by also allowing for the presence of many unstructured moment equalities, i.e., we consider a partially identified model with many moment (in)equalities. This generalization is relevant in practice as applications of partially identified econometric models often include both moment inequalities and equalties.

This paper builds on the inference methods proposed in \citetalias{chernozhukov/chetverikov/kato:2014c}. Their goal is to test whether a collection of $p$ moment inequalities simultaneously holds or not. Their hypothesis test compares a test statistic given by the maximum of $p$ Studentized statistics, with suitable critical values that are based on three methods: self-normalization, multiplier bootstrap, and empirical bootstrap. In addition, each one of these can be implemented with an optional first-stage moment selection procedure with the objective of detecting slack moment inequalities, thus increasing the statistical power. If the first stage is used, it can be in turn based on the same three methods: self-normalization, multiplier bootstrap, and empirical bootstrap. So, for example, one possible critical value is one based on empirical bootstrap with a first-step moment selection procedure based on self normalization. According to their simulation results in \citetalias{chernozhukov/chetverikov/kato:2014c}, using a first-step moment selection procedure can produce significant power gains.

We contribute to this literature by proposing new critical values for the hypothesis testing problem in \citetalias{chernozhukov/chetverikov/kato:2014c}. Our critical values are the result of combining the approximation methods in \citetalias{chernozhukov/chetverikov/kato:2014c} in the second step (i.e.\ self-normalization, multiplier bootstrap, or empirical bootstrap), with a novel first-step moment inequality selection procedure based on the Lasso. Besides the proposing a different first-step moment selection procedure, our inference method uses a second step that can ignore the first-step moment inequality selection, thus increasing statistical power. We refer to the resulting hypothesis test as a two-step Lasso-based inference methods.

On the theoretical front, we contribute by investigating the asymptotic properties of our two-step Lasso inference methods. We establish the following results. First, we provide conditions under which our methods are uniformly valid, both in the underlying parameter $\theta $ and the distribution of the data. According to the literature in moment (in)equalities, obtaining uniformly valid asymptotic results is important to guarantee that the asymptotic analysis provides an accurate approximation to finite sample results.\footnote{In these models, the limiting distribution of the test statistic is discontinuous in the slackness of the moment inequalities, while its finite sample distribution does not exhibit such discontinuities. In consequence, asymptotic results obtained for any fixed distribution (i.e.\ pointwise asymptotics) can be grossly misleading, and possibly producing confidence sets that undercover (even asymptotically). See \cite{imbens/manski:2004}, \cite{andrews/guggenberger:2009}, \cite{andrews/soares:2010}, and \cite{andrews/shi:2013} (Section 5.1).} Second, by virtue of results in \citetalias{chernozhukov/chetverikov/kato:2014c}, all of our proposed tests are asymptotically optimal in a minimax sense. Third, we compare the power of our methods to the corresponding one in \citetalias{chernozhukov/chetverikov/kato:2014c}, both in theory and in simulations. Since our two-step procedure and the corresponding one in \citetalias{chernozhukov/chetverikov/kato:2014c} are based on the same approximations, our power comparison is a comparison of the Lasso-based first step vis-\`a-vis the ones in \citetalias{chernozhukov/chetverikov/kato:2014c}. On the theory front, we obtain a region of underlying parameters under which the power of our method dominates that of \citetalias{chernozhukov/chetverikov/kato:2014c}. We also conduct extensive simulations to explore the practical consequences of our theoretical findings. Our simulations indicate that a Lasso-based first step is usually as powerful as the one in \citetalias{chernozhukov/chetverikov/kato:2014c}, and can sometimes be more powerful. Fourth, we show that our Lasso-based first step can be implemented by thresholding standardized sample averages (see Lemma \ref{lem:LassoClosedForm}) and so it is straightforward to implement. At the same time, this implies that our Lasso-based first step coincides with a self-normalization first step with a very specific choice of the tuning parameter. This choice of tuning parameter is an important aspect of our contribution, as it is responsible for the aforementioned power advantages.

As in any other Lasso problem, the validity of our method depends on the appropriate choice of the Lasso tuning parameter. One limitation of our approach is that the validity of our Lasso-based first step is shown for a tuning parameter choice that depends on unknown population moments (see Eq.\ \eqref{eq:LambdaConcrete}). This is not specific to our problem as it is a characteristic feature of much of the Lasso literature. To implement our Lasso-based first step in practice, we propose replacing population moments with their sample counterparts. Our simulations show that this choice delivers excellent results in simulations, in terms of both size control and power. However, we acknowledge that our current theoretical results do not take into account the sample variability of our empirical version of the tuning parameter. We consider that a rigorous handling of these issues is beyond the scope of this paper.

The Lasso was first proposed in the seminal contribution by \cite{tibshirani:1996} as a regularization technique in the linear regression model in which the number of regressors is allowed to exceed the sample size. Since then, this method has found wide use as a dimension reduction technique in large dimensional models with strong theoretical underpinnings.\footnote{For excellent reviews of this method see, e.g., \cite{belloni/chernozhukov:2011}, \cite{buhlmann/vandegeer:2011}, \cite{fan/lv/qi:2011}, and \cite{hastie/tibshirani/wainwright:2015}.} It is precisely these powerful shrinkage properties that serve as motivation to consider the Lasso as a procedure to separate out and select binding moment inequalities from the non-binding ones in a partially identified model with many moment (in)equalities.

This paper proposes using the Lasso to select moments in a partially identified moment (in)equality model. In the context of point identified problems, there is an existing literature that proposes the Lasso to address estimation and moment selection in GMM settings. In particular, \cite{caner:2009} introduce Lasso type GMM-Bridge estimators to estimate structural parameters in a general model. The problem of selection of moment in GMM is studied in \cite{liao:2013} and \cite{cheng/liao:2015}. In addition, \cite{caner/zhang:2014} and \cite{caner/han/lee:2016} provide a method to estimate parameters in a GMM model with diverging number of moments/parameters and to select valid moments among many valid or invalid moments, respectively. In addition, \cite{fan/liao/yao:2015} consider the problem of inference in high dimensional models with sparse alternatives. Finally, \cite{caner/fan:2015} propose a hybrid two-step estimation procedure based on Generalized Empirical Likelihood, where instruments are chosen in a first-stage using an adaptive Lasso procedure.

The remainder of the paper is organized as follows. Section \ref{sec:Setup} describes the inference problem and introduces our assumptions. Section \ref{sec:Lasso} introduces the Lasso as a method to distinguish binding moment inequalities from non-binding ones and Section \ref{sec:Inference} considers inference methods that use the Lasso as a first-step moment selection procedure. Section \ref{sec:Power} compares the power properties of inference methods based on the Lasso with the ones in the literature. Section \ref{sec:MonteCarlos} provides evidence of the finite sample performance using Monte Carlo simulations. Section \ref{sec:Conclusions} concludes. Proofs of the main results and several intermediate results are reported in the appendix. 

Throughout the paper, we use the following notation. For any set $S$, $|S|$ denotes its cardinality, and for any vector $x \in \mathbb{R}^{d}$, $||x||_{1}\equiv \sum_{i=1}^{d}|x_i|$.

\section{Setup}\label{sec:Setup}

For each $\theta \in \Theta \subseteq \mathbb{R}^{q}$ with $q\in \mathbb{N}$, let $X(\theta):\Omega \to \mathbb{R}^{k}$ be a $k$-dimensional random variable with distribution $P$ and mean $\mu (\theta )\equiv E[X(\theta)]\in \mathbb{R} ^{k}$. Let $\mu _{j}(\theta )$ denote the $j$th component of $\mu(\theta)$ so that $\mu (\theta )=\{\mu _{j}(\theta )\}_{j\leq k}$. 
The econometric model predicts that the true parameter value $\theta _{0}$ satisfies the following collection of $p$ moment inequalities and $v\equiv k-p$ moment equalities: 
\begin{eqnarray} 
	\mu _{j}(\theta _{0}) &\leq &0\text{ for all }j=1,\ldots ,p, \text{ and } \notag \\
	\mu _{j}(\theta _{0}) &=&0\text{ for all }j=p+1,\ldots ,k. \label{eq:MI}
\end{eqnarray} 
As in \citetalias{chernozhukov/chetverikov/kato:2014c}, we are implicitly allowing the distribution $P$ and the number of moment (in)equalities, $k = p+v$ to depend on $n$, and we assume that $2k-p >1$. In particular, we are primarily interested in the case in which $p=p_{n}\to \infty$ and $v=v_{n}\to \infty$ as $n \to \infty$, but the subscripts will be omitted to keep the notation simple. In particular, $p$ and $v$ can be much larger than the sample size and increase at rates made precise in Section \ref{sec:Assumptions}. In addition, we allow the econometric model to be partially identified, i.e., the moment (in)equalities in Eq.\ \eqref{eq:MI} do not necessarily restrict $\theta _{0}$ to a single value, but rather they constrain it to belong to the identified set, denoted by $\Theta _{I}(P)$. By definition, the identified set is as follows:
\begin{equation}
\Theta _{I}(P)~\equiv ~\left\{ \theta \in \Theta :\left\{
\begin{array}{l}
	\mu _{j}(\theta )\leq 0\text{ for all }j=1,\ldots ,p, \text{ and }  \\ 
	\mu _{j}(\theta )=0\text{ for all }j=p+1,\ldots ,k
\end{array}\right\} \right\}.  \label{eq:IdSet}
\end{equation}
Since our econometric model is characterized by the moment (in)equalities in Eq.\ \eqref{eq:MI}, all of the parameter values in the identified set are considered to be observationally equivalent to $\theta _{0}$ in our econometric framework.

Our goal is to test whether a particular parameter value $\theta \in \Theta $ is a possible candidate for the true parameter value $\theta _{0} \in \Theta_{I}(P)$, or not. In other words, we are interested in testing:
\begin{equation}
	H_{0}:\theta _{0}=\theta~~ \text{ vs. }~~H_{1}:\theta _{0}\not=\theta .
	\label{eq:HypTest}
\end{equation}
In our partially identified framework, every parameter in the identified set $\Theta _{I}(P)$ is deemed observationally equivalent to the true parameter value $\theta _{0}$. In other words, we consider $H_{0}:\theta=\theta _{0}$ to be observationally equivalent to $H_{0}:\theta \in \Theta _{I}(P)$. As a consequence, Eq.\ \eqref{eq:HypTest} can be equivalently reexpressed as follows:
\begin{align}
	H_{0}:\theta \in \Theta _{I}(P)\quad \text{vs.}\quad H_{1}:\theta \not\in \Theta _{I}(P)
	 \label{eq:HypTest2a}
\end{align}
or, alternatively,
\begin{align}
H_{0} :\left\{\begin{array}{l}
	\mu _{j}(\theta )\leq 0\text{ for all }j=1,\ldots ,p, \text{ and }  \\ 
	\mu _{j}(\theta )=0\text{ for all }j=p+1,\ldots ,k
\end{array}\right\}~ \text{vs.}~ H_{1} :\left\{\begin{array}{l}
	\mu _{j}(\theta )> 0\text{ for some }j=1,\ldots ,p, \text{ or }\\ 
	\mu _{j}(\theta )\neq0\text{ for some }j=p+1,\ldots ,k
\end{array}\right\}.
 \label{eq:HypTest2}
\end{align}
 
In this paper, we propose a procedure to implement the hypothesis test in Eq.\ \eqref{eq:HypTest} (or, equivalently, Eq.\ \eqref{eq:HypTest2a} or Eq.\ \eqref{eq:HypTest2}) with a given significance level $\alpha \in (0,1)$ based on a random sample of $ X(\theta)\sim P(\theta)$, denoted by $X^{n}(\theta) \equiv \{X_{i}(\theta)\}_{i\leq n}$. The inference procedure will reject $H_0$ whenever a test statistic $ T_{n}(\theta )$ exceeds a critical value $c_{n}(\alpha ,\theta )$, i.e.,
\begin{equation}
\phi _{n}(\alpha ,\theta )~\equiv ~ 1[T_{n}(\theta )>c_{n}(\alpha ,\theta )].
\label{eq:HT}
\end{equation}
By the duality between hypothesis tests and confidence sets, a confidence set for $\theta _{0}$ can be constructed by collecting all parameter values for which the inference procedure is not rejected, i.e.,
\begin{equation}
C_{n}(1-\alpha )~\equiv ~\{\theta \in \Theta :T_{n}(\theta ) \leq c_{n}(\alpha ,\theta )\}. \label{eq:CS}
\end{equation}
Our formal results will have the following structure. Let $\mathcal{P}$ denote a set of probability distributions. We will show that for all $P \in \mathcal{P}$ and $\theta \in \Theta$ that satisfies $H_0$,
\begin{equation}
	P\del[1]{T_{n}(\theta ) > c_{n}(\alpha ,\theta )}~~\leq ~~ \alpha + o(1). \label{eq:CSvalid2}
\end{equation}
Moreover, the convergence in Eq.\ \eqref{eq:CSvalid2} will be shown to occur uniformly over both $P \in \mathcal{P}$ and $\theta\in \Theta$ that satisfies $H_0$. This uniform size control result in Eq.\ \eqref{eq:CSvalid2} has important consequences regarding our inference problem. First, this result immediately implies that the hypothesis test procedure in Eq.\ \eqref{eq:HT} uniformly controls asymptotic size i.e., for all $\theta \in \Theta$ and under $H_0:\theta_0 = \theta$,
\begin{equation}
	\underset{n\to \infty }{\lim \sup }~\sup_{P\in \mathcal{P} }~E[\phi _{n}(\alpha ,\theta )]~~\leq ~~ \alpha. \label{eq:HTvalid}
\end{equation}
Second, the result also implies that the confidence set in Eq.\ \eqref{eq:CS} is asymptotically uniformly valid, i.e.,
\begin{equation}
	\underset{n\to \infty }{\lim \inf }~\inf_{P\in \mathcal{P} }~\inf_{\theta \in \Theta _{I}(P)}~P\del[1]{\theta\in C_{n}(1-\alpha )}~~\geq ~~1-\alpha. \label{eq:CSvalid}
\end{equation}

The rest of the section is organized as follows. Section \ref{sec:Assumptions} specifies the assumptions on the probability space $\mathcal{P}$ that are required for our analysis. All the inference methods described in this paper share the test statistic $ T_{n}(\theta )$ and differ only in the critical value $c_{n}(\alpha ,\theta )$. The common test statistic is introduced and described in Section \ref{sec:TestStat}.

\subsection{Assumptions}\label{sec:Assumptions}

This paper considers the following assumptions.

\begin{assumptionA} \label{ass:Basic}
	For every $\theta \in \Theta \in \mathbb{R}^{q}$, let ${X}^{n}(\theta) \equiv \{X_{i}(\theta)\}_{i\leq n}$ be i.i.d.\ $k$-dimensional random vectors distributed according to $P\in \mathcal{P}$. Further, let $E[X_{1j}(\theta)]\equiv \mu_{j}(\theta) $, $Var[X_{1j}(\theta)] \equiv \sigma^2_{j}(\theta) >0$, and $Z_{ij}(\theta) \equiv (X_{ij}(\theta) - \mu_{j}(\theta))/ \sigma_{j}(\theta)$, where $X_{ij}(\theta)$ denotes the $j$ component of $X_i(\theta)$.
\end{assumptionA}

\begin{assumptionA} \label{ass:Rates}
For some constants $\delta \in (0,1]$, $c\in ((1-\delta)/2,1/2)$ and $C>0$, $M_{n,2+\delta }^{2+\delta }( \ln (2k-p) ) ^{( 2+\delta) /2}\leq C n^{1/2 - c}$ with $M_{n,2+\delta } \equiv \max_{j=1,\dots ,k} \sup_{\theta \in \Theta } ( E[|Z_{1j}(\theta)|^{2+\delta }]) ^{1/( 2+\delta ) } $ and  $Z_{ij}(\theta)$ is as in Assumption \ref{ass:Basic}.
\end{assumptionA}

\begin{assumptionA} \label{ass:Rates2}
For some constants $c\in (0,1/2)$ and $C>0$, $ B_{n}^{2}(\ln (2k-p)) \leq C n^{1/2 - c}$ with $B_{n} \equiv \sup_{\theta \in \Theta}( E[\max_{j=1,\dots,k}|Z_{1j}(\theta)|^{4 }]) ^{1/4 } $ and  $Z_{ij}(\theta)$ is as in Assumption \ref{ass:Basic}.
\end{assumptionA}

\begin{assumptionA} \label{ass:Rates3}
For some constants $c\in (0,1/2)$ and $C>0$, $\max \{M_{n,3}^{3},M_{n,4}^{2},B_{n}\}^{2}(\ln ( (2k-p)n) )^{7/2}\leq Cn^{1/2-c}$, where $M_{n,3 }$, $M_{n,4}$, and $B_{n}$ are as in Assumptions \ref{ass:Rates} and \ref{ass:Rates2}.
\end{assumptionA}

We now briefly describe these assumptions and relate them to those used by \citetalias{chernozhukov/chetverikov/kato:2014c}. Assumption \ref{ass:Basic} is standard in microeconometric applications. Assumption \ref{ass:Rates} defines $M_{n,2+\delta }^{2+\delta}$ as the maximum over $\theta\in \Theta$ and $j =1,\dots,k$ of the $(2+\delta)$-absolute moments of $Z_{ij}(\theta)\equiv (X_{ij}(\theta) - \mu_{j}(\theta))/ \sigma_{j}(\theta)$. Note that $2k-p = 2v+p$, i.e., the total number of moment inequalities $p$ plus twice the number of moment equalities $v$, all of which could depend on $n$. Also, note that $M_{n,2+\delta }$ is a function of $P$ and $k=v+p$, both of which could depend on $n$. In this sense, Assumption \ref{ass:Rates} is limiting the rate of growth of $M_{n,2+\delta }$ and the number of moment (in)equalities as $n$ diverges. If we restrict to $\delta =1$, Assumption \ref{ass:Rates} is analogous to \citetalias{chernozhukov/chetverikov/kato:2014c} (Eqs.\ (22) or (27)). Assumption \ref{ass:Rates2} has a similar interpretation as Assumption \ref{ass:Rates}, except that it applies to $\max_{j=1,\dots,k}|Z_{1j}(\theta)|^{4}$ instead of $|Z_{1j}(\theta)|^{2+\delta }$. Note that Assumption \ref{ass:Rates2} is analogous to \citetalias{chernozhukov/chetverikov/kato:2014c} (Eq.\ (27)). Assumptions \ref{ass:Basic}-\ref{ass:Rates2} will be used to show that our first-step moment selection based on the Lasso will not exclude any of the true binding moment inequalities (see Lemma \ref{lem:LassoNoOverFit}). Based on this result, we then show that the two-step Lasso SN approximation provides uniform size control under the same conditions (see Theorem \ref{thm:Lasso2SSize}). Assumption \ref{ass:Rates3} is an additional condition that further restricts the rate of growth of the moments of the random variables and number of moment (in)equalities as $n\to \infty$. We show in Lemma \ref{lem:A4impliesA2andA3} that Assumption \ref{ass:Rates3} implies that Assumption \ref{ass:Rates} holds with $\delta = 1$ and Assumption \ref{ass:Rates2} holds. We also note that Assumption \ref{ass:Rates3} is analogous to \citetalias{chernozhukov/chetverikov/kato:2014c} (Eq.\ (35)). This stronger condition will be used to provide asymptotic size control for the bootstrap-based inference. In particular, under Assumptions \ref{ass:Basic} and \ref{ass:Rates3}, we show that the two-step Lasso bootstrap approximation provides uniform size control (see Theorem \ref{thm:Lasso2SBootstrap}).\footnote{Assumptions \ref{ass:Basic}-\ref{ass:Rates3} are tailored for the construction of confidence sets in Eq.\ \eqref{eq:CS} in the sense that all the relevant constants are defined uniformly for all $\theta \in \Theta$. If we were only interested in the hypothesis testing problem in Eq.\ \eqref{eq:HT} for a particular value of ${\theta}$, then the previous uniform assumptions could be replaced by their pointwise versions at the parameter being tested. The pointwise version of these assumptions would involve $\theta$-specific sequences $M_{n,2+\delta}(\theta)$ and $B_{n}(\theta)$, and would thus be weaker than their uniform counterparts. In addition, the pointwise version of our resulting two-step inference methods would use $M_{n,2+\delta}(\theta)$ in Eq.\ \eqref{eq:LambdaConcrete}, and can thus be shown to have more statistical power than their uniform counterparts.}

\subsection{Test statistic}\label{sec:TestStat}
Throughout the paper, we consider the following test statistic:
\begin{equation}
T_{n}(\theta) ~\equiv~ \max \cbr[3]{\max_{j=1,\dots ,p}\frac{\sqrt{n}\hat{\mu}_{j}(\theta)}{\hat{ \sigma}_{j}(\theta)},\max_{s=p+1,\dots
,k}\frac{\sqrt{n}\left\vert \hat{\mu} _{s}(\theta)\right\vert }{\hat{\sigma}_{s}(\theta)}},
\label{eq:TestStat}
\end{equation}
where, for $j=1,\dots ,k$,
$
\hat{\mu}_{j}(\theta) \equiv \frac{1}{n}\sum_{i=1}^{n}X_{ij}(\theta)$  and 
$\hat{\sigma}_{j}^{2}(\theta) \equiv \frac{1}{n}\sum_{i=1}^{n}\left( X_{ij}(\theta)-\hat{\mu} _{j}(\theta)\right) ^{2}
$.
Note that Eq.\ \eqref{eq:TestStat} is not properly defined if $\hat{\sigma}_{j}^{2}(\theta)=0$ for some $j=1,\dots,k$ and, in such cases, we use the convention that $x/0 \equiv \infty \times 1[x>0] -\infty \times 1[x<0] $.

The test statistic is identical to that in \citetalias{chernozhukov/chetverikov/kato:2014c} with the exception that we allow for the presence of moment equalities. By definition, large values of $T_n(\theta)$ are an indication that $H_0: \theta = \theta_0$ is likely to be violated, leading to the rejection of the hypothesis test in Eq.\ \eqref{eq:HT}. The remainder of the paper considers several procedures to construct critical values that can be associated to this test statistic.

\section{Lasso as a first-step moment selection procedure}\label{sec:Lasso}

In order to propose a critical value for our test statistic $T_{n}(\theta)$, we need to approximate its distribution under $H_0$. According to the econometric model in Eq.\ \eqref{eq:MI}, the true parameter satisfies $p$ moment inequalities and $v$ moment equalities. By definition, the moment equalities are always binding under $H_0$. On the other hand, the moment inequalities may or may not be binding under $H_0$, and a successful approximation of the asymptotic distribution depends on being able to distinguish between these two cases. Incorporating this information into the hypothesis testing problem is one of the key issues in the literature on inference in partially identified moment (in)equality models.

In their seminal contribution, \citetalias{chernozhukov/chetverikov/kato:2014c} is the first paper in the literature to conduct inference in a partially identified model with many unstructured moment inequalities. Their paper proposes several procedures to select binding moment inequalities from non-binding based on three approximation methods: self-normalization (SN), multiplier bootstrap (MB), and empirical bootstrap (EB). Our contribution is to propose a novel approximation method based on the Lasso. By definition, the Lasso penalizes parameters values by their $\ell_{1}$-norm, with the ability of producing parameter estimates that are exactly equal to zero. This powerful shrinkage property is precisely what motivates us to consider the Lasso as a first-step moment selection procedure in a model with many moment (in)equalities. As we will soon show, the Lasso is an excellent method to detect binding moment inequalities from non-binding ones, and this information can be successfully incorporated into an inference procedure for many moment (in)equalities.

For every $\theta \in \Theta$, let $J(\theta)$ denote the true set of binding moment inequalities, i.e., $J(\theta)~\equiv~ \{j=1,\dots ,p~:~\mu _{j}(\theta)\geq 0\}$. Let $\mu_{I}(\theta) \equiv \{\mu_j(\theta)\}_{j=1}^{p}$ denote the moment vector for the moment inequalities and let $\hat{\mu}_{I}(\theta) \equiv \{\hat{\mu}_j(\theta)\}_{j=1}^{p}$ denote its sample analogue. In order to detect binding moment inequalities, we consider the weighted Lasso estimator of $\mu_{I}(\theta)$, given by
\begin{equation}
\hat{\mu}_{L}(\theta) ~\equiv~  \underset{t\in \mathbb{R} ^{p}}{\arg \min } \cbr[3]{ \left( \hat{\mu}_{I}(\theta) -t\right)^{\prime }\hat{W}(\theta)\left( \hat{\mu}_{I}(\theta)-t\right) + \lambda _{n}\left\Vert \hat{W}(\theta)^{1/2}t\right\Vert
_{1}}, \label{eq:Lasso0}
\end{equation}
where $\lambda _{n}$ is a positive penalization sequence that controls the amount of regularization and $\hat{W}(\theta)$ is a positive definite weighting matrix. To simplify the computation of the Lasso estimator, we impose $\hat{W}(\theta) \equiv diag\{ 1/\hat{\sigma}_{j}(\theta)^{2}\} _{j=1}^{p}$. As a consequence, Eq.\ \eqref{eq:Lasso0} becomes:
\begin{equation}
\hat{\mu}_{L}(\theta)  ~=~ \cbr[3]{ \underset{m\in \mathbb{R} }{\arg \min }\left\{ \left( {\hat{ \mu}_{j}(\theta)}-m\right) ^{2}+\lambda _{n}\hat{\sigma}_{j}(\theta)|m|\right\}} _{j=1}^{p}.\label{eq:Lasso1}
\end{equation}

Notice that instead of using the Lasso in one $p$-dimensional model we instead use it in $p$ one-dimensional models. As we shall see later, $\hat{\mu}_{L}(\theta)$ in Eq.\ \eqref{eq:Lasso1} is closely linked to the soft-thresholded least squares estimator, which implies that its computation is straightforward.  The Lasso estimator $\hat{\mu}_{L}(\theta)$ implies a Lasso-based estimator of $J(\theta)$, given by
\begin{equation}
\hat{J}_{L}(\theta) ~\equiv~ \{j=1,\dots ,p:\hat{\mu}_{j,L}(\theta)/\hat{\sigma}_{j}(\theta) ~\geq~ -\lambda _{n}\}.
\label{eq:BindingLasso}
\end{equation}
In order to implement this procedure, we need to choose the sequence $\lambda_n$, which determines the degree of regularization imposed by the Lasso. A higher value of $\lambda_{n}$ will produce a larger number of moment inequalities considered to be binding, resulting in a lower rejection rate. In consequence, this is a critical choice for our inference methodology. According to our theoretical results,
 a suitable choice of $\lambda_n$ is
\begin{equation}
\lambda _{n}~=~(4/3+\varepsilon) n^{-1/2}\left( {M}_{n,2+\delta }^{2}n^{-\delta /(2+\delta )}-n^{-1}\right) ^{-1/2} \label{eq:LambdaConcrete}
\end{equation}
for any arbitrary $\varepsilon>0$. Assumption \ref{ass:Rates} implies that $\lambda _{n}$ in Eq.\ \eqref{eq:LambdaConcrete} satisfies $\lambda_n\to 0$. Notice that Eq.\ \eqref{eq:LambdaConcrete} is infeasible as it depends on the unknown expression ${M}_{n,2+\delta }$. In practice, one can replace this unknown expression with its sample analogue:
\begin{equation}
	\hat{M}^{2}_{n,2+\delta } ~=~  \max_{j=1,\dots ,k} ~\sup_{\theta \in \Theta } ~ \left(n^{-1} \sum\nolimits_{i=1}^{n}|(X_{ij}(\theta) - \hat{\mu}_{j}(\theta))/\hat{\sigma}_{j}(\theta)|^{2+\delta }\right) ^{2/( 2+\delta ) }.
	\label{eq:Mconcrete}
\end{equation}
We show in Section \ref{sec:MonteCarlos} that this practical choice delivers excellent results in simulations, in terms of both size control and power. Our theoretical results do not take into account the sample variability of our empirical version of the tuning parameter. We consider that a rigorous handling of these issues is beyond the scope of this paper.

As explained earlier, our Lasso procedure is used in the first step to detect binding moment inequalities from non-binding ones. The following result formally establishes that our Lasso procedure includes all binding ones with a probability that approaches one, uniformly.

\begin{lemma} \label{lem:LassoNoOverFit}
	Assume Assumptions \ref{ass:Basic}-\ref{ass:Rates2}, and let $\lambda_n$ be as in Eq.\ \eqref{eq:LambdaConcrete}. Then,
\begin{align*}
&P(J(\theta) \subseteq \hat{J}_{L}(\theta))\geq \\
&1-2\exp [ \ln (2k-p)( 1-n^{(2c+\delta -1)/(2+\delta)}/( 2C^{2/(2+\delta)}) ) ] [1+K( ( Cn^{-c+( 1-\delta ) /2}) ^{1/( 2+\delta ) }+1) ^{2+\delta }]-\tilde{K}n^{-c}\to 1,
\end{align*}
where $K,C,\tilde{K}>0$ and $c>(1-\delta)/2$ are universal constants, and so the convergence is uniform in all parameters $\theta$ and distributions $P$ that satisfy the conditions in the statement.
\end{lemma}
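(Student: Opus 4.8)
The first step is purely deterministic: solve the one-dimensional Lasso problems in \eqref{eq:Lasso1} in closed form and translate the rule \eqref{eq:BindingLasso} into a simple event on the Studentized moments. Each coordinate problem $\min_m (\hat\mu_j(\theta)-m)^2+\lambda_n\hat\sigma_j(\theta)|m|$ is soft-thresholding, so $\hat\mu_{j,L}(\theta)=\sgn(\hat\mu_j(\theta))\max\{|\hat\mu_j(\theta)|-\lambda_n\hat\sigma_j(\theta)/2,\,0\}$. Substituting into \eqref{eq:BindingLasso} and checking the three regions of the soft-threshold shows that $j\in\hat J_L(\theta)$ if and only if $\sqrt n\,\hat\mu_j(\theta)/\hat\sigma_j(\theta)\ge-\tfrac32\lambda_n\sqrt n$. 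Hence, with $J(\theta)=\{j\le p:\mu_j(\theta)\ge0\}$,
\[
P[J(\theta)\not\subseteq\hat J_L(\theta)]\ \le\ \sum_{j\in J(\theta)}P\!\left[\frac{\sqrt n\,\hat\mu_j(\theta)}{\hat\sigma_j(\theta)}<-\tfrac32\lambda_n\sqrt n\right],
\]
and since $|J(\theta)|\le p$ it suffices to bound each summand by $2\exp(\cdots)[\cdots]$ and to add a single term accounting for the variance estimators.

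\textbf{Step 2: removing the randomness of $\hat\sigma_j(\theta)$ (the $\tilde K n^{-c}$ term).} Because $j\in J(\theta)$ means $\mu_j(\theta)\ge0$, the event above implies $\sqrt n(\hat\mu_j(\theta)-\mu_j(\theta))/\hat\sigma_j(\theta)<-\tfrac32\lambda_n\sqrt n$. On the event $G_n\equiv\{\max_{j\le p}|\hat\sigma_j^2(\theta)/\sigma_j^2(\theta)-1|\le\eta_n\}$ for a suitable $\eta_n\to0$ we have $\hat\sigma_j(\theta)\ge\sigma_j(\theta)\sqrt{1-\eta_n}$, so the failure event for $j$ is contained in $\{n^{-1/2}\sum_i Z_{ij}(\theta)<-x_n\}$ with $x_n\equiv\tfrac32\lambda_n\sqrt n\sqrt{1-\eta_n}$ and $Z_{ij}(\theta)$ the standardized summands of Assumption~\ref{ass:Rates2}. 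Pulling $G_n^c$ out of the union bound once, $P[G_n^c]$ is controlled by a fourth-moment Markov inequality built on $B_n$ together with Assumption~\ref{ass:Rates2}, giving a single term of order $\tilde K n^{-c}$ uniformly in $\theta$; this is the source of the last summand in the statement, and explains why $B_n$ and the constant $c$ enter through Assumption~\ref{ass:Rates2} rather than through a per-coordinate bound.

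\textbf{Step 3: moderate deviations and the calibration of $\lambda_n$ (the crux).} The heart of the argument is a one-sided moderate-deviation bound for the standardized mean $n^{-1/2}\sum_i Z_{ij}(\theta)$ at the level $x_n$. The natural device is that $\sqrt n\,\hat\mu_j(\theta)/\hat\sigma_j(\theta)$ is a Studentized statistic, i.e.\ a monotone transform of a self-normalized sum, so a self-normalized (equivalently, Cram\'er-type) moderate-deviation inequality of the kind used by \citetalias{chernozhukov/chetverikov/kato:2014c} applies with only the $(2+\delta)$th moment finite, yielding $P[n^{-1/2}\sum_i Z_{ij}(\theta)<-x_n]\le 2\exp(-x_n^2/2)[1+K(\cdots)^{2+\delta}]$, where the bracket is the relative-error factor of the inequality after bounding the relevant $(2+\delta)$th absolute moments by $M_{n,2+\delta}$. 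I would then insert $\lambda_n$ from \eqref{eq:LambdaConcrete}: the product $\tfrac32\cdot(4/3+\varepsilon)=2+\tfrac32\varepsilon$ together with the $-n^{-1}$ correction inside $\lambda_n$ is calibrated precisely so that $x_n^2/2\ge n^{\delta/(2+\delta)}/(2M_{n,2+\delta}^2)$ for all large $n$, the strictly positive $\varepsilon$ supplying the slack needed to absorb the $\sqrt{1-\eta_n}$ factor and the moderate-deviation remainder. Multiplying by $|J(\theta)|\le p$ produces the leading $2p\exp(-n^{\delta/(2+\delta)}/(2M_{n,2+\delta}^2))[1+K(\cdots)^{2+\delta}]$ term.

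\textbf{Step 4: conclusion, uniformity, and the main obstacle.} It remains to see that the displayed upper bound tends to $1$: the $\tilde K n^{-c}$ term vanishes, while Assumption~\ref{ass:Rates} forces the leading term to $0$, since $M_{n,2+\delta}^{2}\,\ln(2k-p)\,n^{-\delta/(2+\delta)}\to0$ makes the exponent $n^{\delta/(2+\delta)}/(2M_{n,2+\delta}^2)$ dominate $\ln p\le\ln(2k-p)$, and the $(\ln(2k-p))^{(2+\delta)/2}n^{-\delta/2}$ factor in Assumption~\ref{ass:Rates} simultaneously absorbs the $(\cdots)^{2+\delta}$ bracket. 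Uniformity over $\theta\in\Theta$ and $P\in\mathcal P$ is automatic, because $M_{n,2+\delta}$, $B_n$, and the constants $K,\tilde K,c,\delta$ are defined as suprema over $\theta$ and the universal constants in the moderate-deviation and Markov inequalities depend on neither $\theta$ nor $P$. The step I expect to be hardest is Step~3: securing a moderate-deviation tail bound that stays valid at the comparatively large deviation $x_n\asymp n^{\delta/(2(2+\delta))}/M_{n,2+\delta}$, which sits at the boundary of the admissible range for variables with only $(2+\delta)$ moments, and tracking the constants carefully enough that both the exponent beats $n^{\delta/(2+\delta)}/(2M_{n,2+\delta}^2)$ and the relative error reproduces the stated bracket, uniformly in $\theta$ and $P$.
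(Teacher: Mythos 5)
Your overall architecture---soft-thresholding closed form, union bound over $J(\theta)$, a separate event controlling the variance estimators, and a moderate-deviation bound calibrated against $\lambda_n$---is the same as the paper's, which obtains the lemma as a corollary of Step 2 in the proof of Theorem \ref{thm:Lasso2SSize}, itself resting on Lemmas \ref{lem:SampleBound}--\ref{lem:SampleBound2}. But there is a genuine gap at the junction of your Steps 2 and 3. By passing to the event $G_n$ and replacing $\hat\sigma_j(\theta)$ with $\sigma_j(\theta)$, you reduce the failure event to $\{n^{-1/2}\sum_i Z_{ij}(\theta)<-x_n\}$, which involves a \emph{standardized} sum, not a self-normalized one. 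The Cram\'er-type inequality you then invoke applies to $S_{nj}/V_{nj}$ with the random normalizer $V_{nj}=(\sum_i Z_{ij}^2)^{1/2}$; it does not apply to $S_{nj}/\sqrt n$. Under only a finite $(2+\delta)$-th moment, the standardized sum admits no relative-error Gaussian tail bound at the deviation level $x_n\asymp n^{\delta/(2(2+\delta))}M_{n,2+\delta}^{-1}$: the available non-uniform Berry--Esseen bound carries an \emph{additive} error of order $M_{n,2+\delta}^{2+\delta}n^{-\delta/2}(1+x_n)^{-(2+\delta)}$, which is only polynomially small and therefore swamps the exponentially small target $e^{-x_n^2/2}$. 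So the display in your Step 3 does not follow from the tools you cite, and this is precisely the boundary-of-admissibility difficulty you flag.

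The repair is to not discard $\hat\sigma_j(\theta)$ at all. Since $\mu_j(\theta)\ge 0$ for every $j\in J(\theta)$, the failure event for $j$ directly implies $|\hat\mu_j(\theta)-\mu_j(\theta)|/\hat\sigma_j(\theta)>3\lambda_n/2$, and $\sqrt n\,(\hat\mu_j(\theta)-\mu_j(\theta))/\hat\sigma_j(\theta)=U_j/\sqrt{1-U_j^2/n}$ is an exactly increasing transform of the self-normalized ratio $U_j=S_{nj}/V_{nj}$; Lemma \ref{lem:SampleBound2} then delivers the leading term because $3\lambda_n/2\ge\gamma_n^{\ast}$ (this is where the factor $4/3+\varepsilon$ in Eq.\ \eqref{eq:LambdaConcrete} enters, and the point $t=n^{\delta/(2(2+\delta))}M_{n,2+\delta}^{-1}$ sits exactly at the admissible boundary of the self-normalized inequality). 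On this route no variance event is needed for $J(\theta)$ itself; the $\tilde K n^{-c}$ term in the statement appears only because the paper deduces the lemma from the stronger containment $J_{I}(\theta)\subseteq\hat J_L(\theta)$ with $J_{I}(\theta)=\{j:\mu_j(\theta)/\sigma_j(\theta)\ge-3\lambda_n/4\}$, for which comparing the population threshold $-3\lambda_n/4$ to the sample threshold $-3\lambda_n/2$ does require controlling $\hat\sigma_j(\theta)/\sigma_j(\theta)$ via $B_n$ and Assumption \ref{ass:Rates2}.
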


Thus far, our Lasso estimator of the binding constrains in Eq.\ \eqref{eq:BindingLasso} has been defined in terms of the solution of the $p$-dimensional minimization problem in Eq.\ \eqref{eq:Lasso1}. We conclude the subsection by providing an equivalent closed form solution for this set.

\begin{lemma}\label{lem:LassoClosedForm}
	Eq.\ \eqref{eq:BindingLasso} can be equivalently reexpressed as follows:	
	\begin{equation}
	\hat{J}_{L}(\theta) ~=~ \{j=1,\dots ,p: {\hat{\mu}_{j}(\theta)}/{\hat{\sigma}_{j}(\theta)}\geq -{3}\lambda_n/{2}\}.
	\label{eq:BindingLasso2}
	\end{equation}
\end{lemma}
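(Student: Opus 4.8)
The plan is to exploit the separability already recorded in Eq.~\eqref{eq:Lasso1}: the $p$-dimensional weighted Lasso decouples into $p$ univariate problems, each of which is a textbook soft-thresholding problem. So the first step is to solve, for each fixed $j$, the scalar program $\min_{m\in\mathbb{R}}\{(\hat{\mu}_{j}(\theta)-m)^2 + \lambda_n\hat{\sigma}_{j}(\theta)|m|\}$ in closed form. Since the objective is strictly convex in $m$ (through the quadratic term), it has a unique minimizer, which I would characterize via the first-order optimality condition $0\in\partial f(m)$, handling the kink at $m=0$ with the subdifferential of $|\cdot|$. This yields the soft-thresholding operator
\[
\hat{\mu}_{j,L}(\theta) ~=~ \sgn(\hat{\mu}_{j}(\theta))\,\bigl(|\hat{\mu}_{j}(\theta)| - \lambda_n\hat{\sigma}_{j}(\theta)/2\bigr)_{+},
\]
with threshold level $\lambda_n\hat{\sigma}_{j}(\theta)/2$, where $(x)_{+}\equiv\max\{x,0\}$.

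The second step is to substitute this closed form into the defining inequality $\hat{\mu}_{j,L}(\theta)/\hat{\sigma}_{j}(\theta)\ge-\lambda_n$ of Eq.~\eqref{eq:BindingLasso} and simplify by splitting into cases according to the sign and magnitude of $\hat{\mu}_{j}(\theta)$. When $\hat{\mu}_{j}(\theta)/\hat{\sigma}_{j}(\theta)\ge -\lambda_n/2$ (which covers $\hat{\mu}_{j}(\theta)\ge0$ as well as the region where the operator shrinks to zero), one has $\hat{\mu}_{j,L}(\theta)\ge0$, so the defining inequality holds automatically; and since $-\lambda_n/2 > -3\lambda_n/2$ the condition in Eq.~\eqref{eq:BindingLasso2} holds as well. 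The only nontrivial case is $\hat{\mu}_{j}(\theta)/\hat{\sigma}_{j}(\theta) < -\lambda_n/2$, where the operator is active and $\hat{\mu}_{j,L}(\theta) = \hat{\mu}_{j}(\theta) + \lambda_n\hat{\sigma}_{j}(\theta)/2$; dividing by $\hat{\sigma}_{j}(\theta)$ and imposing $\ge -\lambda_n$ rearranges exactly to $\hat{\mu}_{j}(\theta)/\hat{\sigma}_{j}(\theta)\ge -3\lambda_n/2$.

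Collecting the cases, membership $j\in\hat{J}_{L}(\theta)$ is in every case equivalent to $\hat{\mu}_{j}(\theta)/\hat{\sigma}_{j}(\theta)\ge -3\lambda_n/2$, which is precisely Eq.~\eqref{eq:BindingLasso2}. I would finish by disposing of the degenerate case $\hat{\sigma}_{j}(\theta)=0$: there the penalty vanishes, so $\hat{\mu}_{j,L}(\theta)=\hat{\mu}_{j}(\theta)$, and both inequalities reduce, under the convention $C/0\equiv\infty\times 1[C>0]-\infty\times 1[C<0]$, to the same statement, so the equivalence persists. There is no genuine analytic obstacle: the result is a deterministic algebraic identity, and the only thing requiring care is the bookkeeping of the sign cases together with the factor $1/2$ from the soft-threshold, which is exactly what converts the cutoff $-\lambda_n$ on the shrunk statistic into the cutoff $-3\lambda_n/2$ on the raw one.
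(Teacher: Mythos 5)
Your proposal is correct and follows essentially the same route as the paper: obtain the soft-thresholding closed form $\hat{\mu}_{j,L}(\theta)=\sgn(\hat{\mu}_{j}(\theta))\max\{|\hat{\mu}_{j}(\theta)|-\hat{\sigma}_{j}(\theta)\lambda_n/2,0\}$ (the paper cites \citet{buhlmann/vandegeer:2011} for this where you derive it from the subdifferential), then verify the set identity by the same case split on the sign and magnitude of $\hat{\mu}_{j}(\theta)$ relative to the threshold, including the degenerate case $\hat{\sigma}_{j}(\theta)=0$. The algebra in your active case, converting the cutoff $-\lambda_n$ on the shrunk statistic into $-3\lambda_n/2$ on the raw one, matches the paper's fourth case exactly.
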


Lemma \ref{lem:LassoClosedForm} is a very important computational aspect of our methodology. This result reveals that $\hat{J}_{L}(\theta) $ can be computed by comparing standardized sample averages with a modified threshold of $-{3}\lambda_n/{2}$. In other words, our Lasso-based first stage can be implemented without the need of solving the $p$-dimensional minimization problem in Eq.\ \eqref{eq:Lasso1}.

\section{Inference methods with Lasso-based first step}\label{sec:Inference}

In the remainder of the paper we show how to conduct inference in our partially identified many moment (in)equality model by combining the Lasso-based first step in Section \ref{sec:Lasso} with a second step based on the inference methods proposed by \citetalias{chernozhukov/chetverikov/kato:2014c}. In particular, Section \ref{sec:SN} combines our Lasso-based first step with their self-normalization approximation, while Section \ref{sec:Boot} combines it with their bootstrap approximations.

\subsection{Self-normalization approximation}\label{sec:SN}

Before describing our self-normalization (SN) approximation with Lasso first stage, we first describe the ``plain vanilla'' SN approximation without first stage moment selection. Our treatment extends the SN method proposed by \citetalias{chernozhukov/chetverikov/kato:2014c} to the presence of moment equalities, which are relevant in applications. 

As a preliminary step, we now define the SN approximation to the $(1-\alpha)$-quantile of $T_{n}(\theta)$ in a hypothetical moment (in)equality model composed of $|J|$ moment inequalities and $k-p$ moment equalities, given by
\begin{equation}
c_{n}^{SN}(|J|,\alpha)\equiv \left\{
\begin{array}{ll}
0 & \text{if }2(k-p)+|J|=0, \\
\tfrac{\Phi ^{-1}\left( 1-\alpha /(2(k-p)+|J|)\right) }{\sqrt{1-\left( \Phi ^{-1}\left( 1-\alpha /(2(k-p)+|J|)\right) \right) ^{2}/n}} & \text{if } 2(k-p)+|J|>0.
\end{array}
\right.
 \label{eq:CVSN_oracle}
\end{equation}
Lemma \ref{lem:SNSize} in the appendix shows that $c_{n}^{SN}(|J|,\alpha)$ provides asymptotic uniform size control in a hypothetical moment (in)equality model with $|J|$ moment inequalities and $k-p$ moment equalities under Assumptions \ref{ass:Basic}-\ref{ass:Rates}. The only difference between this result and \citetalias{chernozhukov/chetverikov/kato:2014c} (Theorem 4.1) is that we allow for the presence of moment equalities. Since our moment (in)equality model has $|J|=p$ moment inequalities and $k-p$ moment equalities, we can define the regular (i.e.\ one-step) SN approximation method by using $|J|=p$ in Eq.\ \eqref{eq:CVSN_oracle}, i.e.,
\begin{equation*}
c_{n}^{SN,1S}(\alpha )\equiv c_{n}^{SN}(p,\alpha )=\tfrac{\Phi ^{-1}\left( 1-\alpha /(2k-p)\right) }{\sqrt{1-\left( \Phi ^{-1}\left( 1-\alpha /(2k-p)\right) \right) ^{2}/n}}.
\end{equation*}
The following result is a corollary of Lemma \ref{lem:SNSize}.

\begin{theorem}[One-step SN approximation] \label{thm:SN1Scorollary}
Assume Assumptions \ref{ass:Basic}-\ref{ass:Rates}, $\alpha \in (0,0.5)$, and that $H_{0}$ holds. Then,
\begin{align*}
&P(T_n(\theta) > c_{n}^{SN,1S}(\alpha )) \\
&\leq
\alpha +\alpha 2^{1+\delta }KCn^{-c+( 1-\delta ) /2}[ ( \ln ( 2k-p) ) ^{-( 2+\delta ) /2}+{2} ^{1/2}( 1-( \ln \alpha ) /( \ln ( 2k-p) ) ) ^{(2+\delta )/2}] \to \alpha,
\end{align*}
where $K,C>0$ and $c>(1-\delta)/2$ are universal constants, and so the convergence is uniform in all parameters $\theta$ and distributions $P$ that satisfy the conditions in the statement.
\end{theorem}

By definition, this SN approximation considers all moment inequalities in the model as binding. A more powerful test can be constructed by using the data to reveal which moment inequalities are slack.  In particular, \citetalias{chernozhukov/chetverikov/kato:2014c} propose a two-step SN procedure which combines a first-step moment inequality selection based on SN methods and the second step SN critical value in Theorem \ref{thm:SN1Scorollary}. If we adapt their procedure to the presence of moment equalities, this yields
\begin{align}
c_n^{SN,2S}(\theta,\alpha) ~&\equiv~ c_{n}^{SN}(|\hat{J}_{SN}(\theta)|,\alpha - 2 \beta_{n} )
\label{eq:SN2S-CV}
\end{align}
with
\begin{align*}
\hat{J}_{SN}(\theta) ~&\equiv ~ \Big\{j\in \{1,\dots,p\}~:
~\sqrt{n}\hat{\mu}_j(\theta)/\hat{\sigma}_j(\theta)>-2c_n^{SN,1S}(\beta_n)\Big\},  
\end{align*}
where $\{\beta_n\}_{n\ge 1}$ is an arbitrary sequence of constants in $(0,\alpha/2)$. By extending arguments in \citetalias{chernozhukov/chetverikov/kato:2014c} to include moment equalities, one can show that inference based on the critical value $c^{SN,2S}(\theta,\alpha)$ in Eq.\ \eqref{eq:SN2S-CV} is asymptotically valid in a uniform sense.


In this paper, we propose an alternative SN procedure by using our Lasso-based first step. In particular, we define the following two-step Lasso SN critical value:
\begin{eqnarray}
c_{n}^{SN,L}(\theta, \alpha )~\equiv~ c_{n}^{SN}(|\hat{J}_{L}(\theta)|,\alpha) ,
\label{eq:cn-SNLasso}
\end{eqnarray}
where $\hat{J}_{L}(\theta)$ is as in Eq.\ \eqref{eq:BindingLasso2}. The following result shows that an inference method based on our two-step Lasso SN critical value is asymptotically valid in a uniform sense.

\begin{theorem}[Two-step Lasso SN approximation] \label{thm:Lasso2SSize}
	Assume Assumptions \ref{ass:Basic}-\ref{ass:Rates2}, $\alpha \in (0,0.5)$, and that $H_{0}$ holds, and let $\lambda_n$ be as in Eq.\ \eqref{eq:LambdaConcrete}. Then,
\begin{align*}
&P(T_{n}(\theta)>c_{n}^{SN,L}(\theta,\alpha))\leq\\
& \alpha+\left\{ 
\begin{array}{c}
\alpha 2^{1+\delta }KCn^{-c+( 1-\delta ) /2}[ ( \ln ( 2k-p) ) ^{-( 2+\delta ) /2}+{2} ^{1/2}( 1-( \ln \alpha ) /( \ln ( 2k-p) ) ) ^{(2+\delta )/2}]+2\tilde{K}n^{-c} \\ 
4\exp [ \ln (2k-p)( 1-n^{(2c+\delta -1)/(2+\delta)}/( 2C^{2/(2+\delta)}) ) ][1+K((Cn^{-c+(1-\delta)/2})^{1/(2+\delta )}+1)^{2+\delta }]
\end{array}
\right\}\to \alpha,
\end{align*}
where $K,C,\tilde{K}>0$ and $c>(1-\delta)/2$ are universal constants, and so the convergence is uniform in all parameters $\theta$ and distributions $P$ that satisfy the conditions in the statement.

\end{theorem}

We now compare our two-step SN Lasso method with the SN methods in \citetalias{chernozhukov/chetverikov/kato:2014c}. Since all inference methods share the test statistic, the only difference lies in the critical values. While the one-step SN critical values considers all $p$ moment inequalities as binding, our two-step SN Lasso critical value considers only $|\hat{J}_{L}(\theta)|$ moment inequalities as binding. Since $|\hat{J}_{L}(\theta)|\leq p$ and $c_{n}^{SN}(\alpha ,|J|)$ is weakly increasing in $|J|$ (see Lemma \ref{lem:CSNincreasing} in the appendix), then our two-step SN method results in a weakly larger rejection probability for all sample sizes. In contrast, the comparison between $c_{n}^{SN,L}(\theta,\alpha)$ and $c_{n}^{SN,2S}(\theta,\alpha)$ is not straightforward as these differ in two aspects. First, the set of binding constrains $\hat{J}_{SN}(\theta)$ according to SN differs from the set of binding constrains $\hat{J}_{L}(\theta)$ according to the Lasso. Second, the quantile of the critical values are different: the two-step SN method in Eq.\ \eqref{eq:SN2S-CV} considers the $\alpha - 2\beta_{n}$ quantile while the Lasso-based method considers the usual $\alpha$ quantile. This second point reveals that our second step effectively ignores the first-step moment selection procedure based on the Lasso, resulting in higher statistical power. As a consequence of these differences, the comparison of these critical values is ambiguous, and so is the resulting comparison of power. This topic will be discussed in further detail in Section \ref{sec:Power}.

\subsection{Bootstrap-based methods}\label{sec:Boot}

\citetalias{chernozhukov/chetverikov/kato:2014c} also propose two bootstrap-based approximation methods: multiplier bootstrap (MB) and empirical bootstrap (EB). Relative to the SN approximation, bootstrap methods have the advantage of taking into account the dependence between the coordinates of $\{\sqrt{n}\hat{\mu}_{j}({\theta})/\hat{\sigma}_{j}(\theta)\}_{j=1}^{p}$ involved in the definition of the test statistic $T_{n}(\theta)$.

As in the previous subsection, we first define the bootstrap approximation to the $(1-\alpha)$-quantile of $T_n(\theta)$ in a hypothetical moment (in)equality model composed of moment inequalities indexed by the set $J$ and the $k-p$ moment equalities. The corresponding MB and EB approximations are denoted by $c_{n}^{MB}(\theta, J, \alpha)$ and $c_{n}^{EB}(\theta, J, \alpha)$, respectively, and are computed as follows.

\begin{algorithm}\label{alg:MB}\textbf{Multiplier bootstrap (MB)}
\begin{enumerate}
\item Generate i.i.d.\ standard normal random variables $\{\epsilon _{i}\}_{i = 1 }^{n}$, and independent of the data $X^{n}(\theta)$.
\item Construct the multiplier bootstrap test statistic:
\begin{equation*}
W_{n}^{MB}(\theta, J)=\max \cbr[4]{\max_{j\in J}\frac{\frac{1}{\sqrt{n}}\sum_{i=1}^{n} \epsilon _{i}( X_{ij}(\theta)-\hat{\mu}_{j}(\theta)) }{\hat{\sigma}_{j}(\theta)} ,\max_{s=p+1,\dots ,k}\frac{\frac{1}{\sqrt{n}}|\sum_{i=1}^{n} \epsilon _{i}( X_{is}(\theta)-\hat{\mu}_{s}(\theta)) |}{\hat{\sigma}_{s}(\theta)}}.
\end{equation*}
\item Calculate $c_{n}^{MB}(\theta, J, \alpha)$ as the conditional $(1-\alpha )$-quantile of $W_{n}^{MB}(\theta, J)$ (given $X^{n}(\theta)$).
\end{enumerate}
\end{algorithm}

\begin{algorithm}\label{alg:EB}\textbf{Empirical bootstrap (EB)}
\begin{enumerate}
\item Generate a bootstrap sample $\{ X ^{*}_{i}(\theta)\}_{i = 1 }^{n}$ from the data, i.e., an i.i.d.\ draw from the empirical distribution of $X^{n}(\theta)$.
\item Construct the empirical bootstrap test statistic:
\begin{equation*}
W_{n}^{EB}(\theta, J)=\max \cbr[4]{ \max_{j\in J}\frac{\frac{1}{\sqrt{n}}\sum_{i=1}^{n} ( X^{*}_{ij}(\theta)-\hat{\mu}_{j}(\theta)) }{\hat{\sigma}_{j}(\theta)} ,\max_{s=p+1,\dots ,k}\frac{\frac{1}{\sqrt{n}}|\sum_{i=1}^{n} ( X^{*}_{is}(\theta)-\hat{\mu}_{s}(\theta)) |}{\hat{\sigma}_{s}(\theta)}}.
\end{equation*}
\item Calculate $c_{n}^{EB}(\theta, J, \alpha)$ as the conditional $(1-\alpha )$-quantile of $W_{n}^{EB}(\theta, J)$ (given $X^{n}(\theta)$).
\end{enumerate}
\end{algorithm}

The results in the remainder of the section will apply to both versions of the bootstrap, and under the same assumptions. For this reason, we can use $c_{n}^{B}(\theta, J, \alpha)$ to denote the bootstrap critical value where $B \in \{MB,EB\}$ represents either MB or EB. Lemma \ref{lem:BootSize} in the appendix shows that $c_{n}^{B}(\theta, J, \alpha)$ for $B \in \{MB,EB\}$ provides asymptotic uniform size control in a hypothetical moment (in)equality model composed of moment inequalities indexed by the set $J$ and the $k-p$ moment equalities under Assumptions \ref{ass:Basic} and \ref{ass:Rates3}. As in Section \ref{sec:SN}, the only difference between this result and \citetalias{chernozhukov/chetverikov/kato:2014c} (Theorem 4.3) is that we allow for the presence of the moment equalities. Since our moment (in)equality model has $|J|=p$ moment inequalities and $k-p$ moment equalities, we can define the regular (i.e.\ one-step) MB or EB approximation method by using $|J|=p$ in Algorithm \ref{alg:MB} or \ref{alg:EB}, respectively, i.e.,
\begin{equation*}
	c_{n}^{B,1S}(\theta, \alpha )~\equiv~ c_{n}^{B}(\theta ,\{1,\dots,p\},\alpha),
\end{equation*} 
where $c_{n}^{B}(\theta,J,\alpha )$ is as in Algorithm \ref{alg:MB} if $B=MB$ or Algorithm \ref{alg:EB} if $B=EB$. The following result is a corollary of Lemma \ref{lem:BootSize}.

\begin{theorem}[One-step bootstrap approximation]\label{thm:B1Scorollary}
	Assume Assumptions \ref{ass:Basic}, \ref{ass:Rates3}, $\alpha \in (0,0.5)$, and that $H_{0}$ holds.  Then,
\begin{equation*}
P(T_{n}(\theta)>c_{n}^{B,1S}(\theta, \alpha ))~\le~ \alpha +\tilde{C}n^{-\tilde{c}},
\end{equation*}
where $\tilde{c},\tilde{C}>0$ are constants that only depend on the constants $c,C$ in Assumption \ref{ass:Rates3}. 
Furthermore, if $\mu(\theta)={\bf 0}_{p}$, then
\begin{equation*}
	\vert P( T_{n}(\theta)>c_{n}^{B,1S}(\theta, \alpha ))-\alpha  \vert ~\leq~ \tilde{C}n^{-\tilde{c}}.
\end{equation*}
\end{theorem}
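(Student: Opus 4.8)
The plan is to obtain Theorem \ref{thm:B1Scorollary} as a direct corollary of Lemma \ref{lem:BootSize} by specializing the index set of binding inequalities to $J=\{1,\dots,p\}$, so that all $p$ moment inequalities are treated as binding. The only work beyond invoking that lemma is a ``slackness'' monotonicity argument that bounds the genuine test statistic by a recentered statistic whose distribution the bootstrap is designed to approximate. First I would record the defining identity $c_{n}^{B,1S}(\theta,\alpha)=c_{n}^{B}(\theta,\{1,\dots,p\},\alpha)$, so the quantity of interest is $P(T_{n}(\theta)>c_{n}^{B}(\theta,\{1,\dots,p\},\alpha))$. Next, introduce the fully recentered statistic
\[
T_{n}^{0}(\theta)\equiv\max\left\{\max_{j=1,\dots,p}\frac{\sqrt{n}(\hat{\mu}_{j}(\theta)-\mu_{j}(\theta))}{\hat{\sigma}_{j}(\theta)},\ \max_{s=p+1,\dots,k}\frac{\sqrt{n}|\hat{\mu}_{s}(\theta)-\mu_{s}(\theta)|}{\hat{\sigma}_{s}(\theta)}\right\}.
\]
Under $H_{0}$ we have $\mu_{j}(\theta)\le 0$ for $j\le p$ and $\mu_{s}(\theta)=0$ for $s>p$. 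Since $\hat{\sigma}_{j}(\theta)>0$, each inequality coordinate satisfies $\sqrt{n}\hat{\mu}_{j}/\hat{\sigma}_{j}=\sqrt{n}(\hat{\mu}_{j}-\mu_{j})/\hat{\sigma}_{j}+\sqrt{n}\mu_{j}/\hat{\sigma}_{j}\le \sqrt{n}(\hat{\mu}_{j}-\mu_{j})/\hat{\sigma}_{j}$, while the equality coordinates are unchanged by recentering. Taking maxima gives the almost-sure bound $T_{n}(\theta)\le T_{n}^{0}(\theta)$, hence $P(T_{n}(\theta)>c)\le P(T_{n}^{0}(\theta)>c)$ for every threshold $c$.

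The second step matches $T_{n}^{0}(\theta)$ to the bootstrap. Because the bootstrap statistics $W_{n}^{B}(\theta,\{1,\dots,p\})$ in Algorithms \ref{alg:MB} and \ref{alg:EB} recenter the data around the sample mean $\hat{\mu}_{j}(\theta)$, the critical value $c_{n}^{B}(\theta,\{1,\dots,p\},\alpha)$ is exactly the conditional $(1-\alpha)$-quantile approximating the distribution of $T_{n}^{0}(\theta)$, i.e.\ the fully binding configuration with $|J|=p$ covered by Lemma \ref{lem:BootSize}. Applying that lemma yields $P(T_{n}^{0}(\theta)>c_{n}^{B}(\theta,\{1,\dots,p\},\alpha))\le \alpha+\tilde{C}n^{-\tilde{c}}$, with $\tilde{c},\tilde{C}$ depending only on the constants $c,C$ of Assumption \ref{ass:Rates3} and the bound uniform over $\theta\in\Theta$ and over $P$ satisfying the assumptions. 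Combining this with the slackness inequality from the previous step proves the first display.

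For the ``furthermore'' claim, observe that when $\mu_{j}(\theta)=0$ for all $j=1,\dots,p$ the slackness inequality holds with equality, so $T_{n}(\theta)=T_{n}^{0}(\theta)$ identically. In this exactly binding configuration Lemma \ref{lem:BootSize} delivers a two-sided approximation: the high-dimensional Gaussian comparison and anti-concentration inputs behind it control the quantile error from both directions, giving $|P(T_{n}(\theta)>c_{n}^{B,1S}(\theta,\alpha))-\alpha|\le \tilde{C}n^{-\tilde{c}}$.

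The main obstacle is entirely contained in Lemma \ref{lem:BootSize}: the high-dimensional Gaussian/bootstrap approximation (extending the \citetalias{chernozhukov/chetverikov/kato:2014c} theory to accommodate the $k-p$ moment equalities through the $2k-p$ effective one-sided constraints) is where the quantitative rate $n^{-\tilde{c}}$ and the uniformity in $(\theta,P)$ are actually earned. Granting that lemma, the corollary is routine; the only two points that require care are the correct direction of the slackness inequality under the one-sided null and the observation that the exactly binding case upgrades the one-sided bound into a two-sided one.
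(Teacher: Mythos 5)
Your proposal is correct and takes essentially the same route as the paper, which proves the theorem in one line by invoking Lemma \ref{lem:BootSize} with $L=\{1,\dots,p\}$. The explicit recentering step via $T_{n}^{0}(\theta)$ and the slackness inequality is harmless but redundant, since Lemma \ref{lem:BootSize} is already stated for the non-recentered statistic under $H_{0}$ and itself absorbs that monotonicity argument (through CCK14's Theorem 4.3).
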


As in the SN approximation method, the regular (one-step) bootstrap approximation considers all moment inequalities in the model as binding. A more powerful bootstrap-based test can be constructed using the data to reveal which moment inequalities are slack. However, unlike in the SN approximation method, Theorem \ref{thm:B1Scorollary} shows that the size of the test using the bootstrap critical values converges to $\alpha$ when all the moment inequalities are binding. This difference comes from the fact that the bootstrap can better approximate the correlation structure in the moment inequalities, which is not taken into account by the SN approximation. As we will see in simulations, this translates into power gains in favor of the bootstrap.

\citetalias{chernozhukov/chetverikov/kato:2014c} propose a two-step bootstrap procedure, combining a first-step moment inequality selection based on the bootstrap with the second step bootstrap critical value in Theorem \ref{thm:B1Scorollary}.\footnote{They also consider the so-called ``hybrid'' procedures in which the first step can be based on one approximation method (e.g.\ SN approximation) and the second step could be based on another approximation method (e.g.\ bootstrap). While these are not explicitly addressed in this section, they are included in the Monte Carlo section.} If we adapt their procedure to the presence of moment equalities, this would be given by:
\begin{align}
c^{B,2S}(\theta,\alpha) ~\equiv ~c_{n}^{B}(\theta, \hat{J}_{B}(\theta), \alpha - 2 \beta_{n})
\label{eq:B2S-CV}
\end{align}
with:
\begin{align*}
\hat{J}_{B}(\theta) ~\equiv ~ \{j\in \{1,\dots,p\}~:~\sqrt{n}\hat{\mu}_j(\theta)/\hat{\sigma}_j(\theta)
>-2c^{B,1S}(\alpha,\beta_n)\},
\end{align*}
where $\{\beta_n\}_{n\ge 1}$ is an arbitrary sequence of constants in $(0,\alpha/2)$. Again, by extending arguments in \citetalias{chernozhukov/chetverikov/kato:2014c} to the presence of moment equalities, one can show that an inference method based on the critical value $c^{B,2S}(\theta,\alpha)$ in Eq.\ \eqref{eq:B2S-CV} is asymptotically valid in a uniform sense.

This paper proposes an alternative two-step bootstrap procedure by using our Lasso-based first step. For $B \in \{MB,EB\}$, define the following two-step Lasso bootstrap critical value:
\begin{equation}
c_{n}^{B,L}(\theta, \alpha )~\equiv~ c_{n}^{B}(\theta, \hat{J}_{L}(\theta), \alpha) ,
\label{eq:cn-BLasso}
\end{equation}
where $\hat{J}_{L}(\theta)$ is as in Eq.\ \eqref{eq:BindingLasso2}, and $c_{n}^{B}(\theta,J,\alpha )$ is as in Algorithm \ref{alg:MB} if $B=MB$ or Algorithm \ref{alg:EB} if $B=EB$. The following result shows that an inference method based on our two-step Lasso bootstrap critical value is asymptotically valid in a uniform sense.

\begin{theorem}[Two-step Lasso bootstrap approximation]\label{thm:Lasso2SBootstrap}
Assume Assumptions \ref{ass:Basic}, \ref{ass:Rates3}, $\alpha \in (0,0.5)$, and that $H_{0}$ holds, and let $\lambda_n$ be as in Eq.\ \eqref{eq:LambdaConcrete}. Then, for $B\in \{MB,EB\}$, 
\begin{align*}
&P( T_n(\theta) > c_n^{B,L} (\theta , \alpha)) \leq\\
& \alpha + \check{C}n^{-\check{c}}+ \tilde{C} n^{-\tilde{c}} +2\tilde{K}n^{-c}+ 4\exp[\ln(2k-p)( 1-\tfrac{n^{c-(1-\delta )/2}}{2C}) ] [1+K( ( \tfrac{C}{n^{c-( 1-\delta ) /2}}) ^{1/( 2+\delta ) }+1) ^{2+\delta }] \to \alpha,
\end{align*}
where $\tilde{c}, \check{c},\tilde{K},\tilde{C}, \check{C}>0$ and $c>(1-\delta)/2$ are universal constants, and so the convergence is uniform in all parameters $\theta$ and distributions $P$ that satisfy the conditions of the statement. 
Furthermore, if $\mu(\theta) ={\bf 0}_{p}$ and
\begin{equation}
\tilde{K}n^{-c}+ 2\exp[\ln(2k-p)( 1-\tfrac{n^{c-(1-\delta )/2}}{2C}) ] [1+K( ( \tfrac{C}{n^{c-( 1-\delta ) /2}}) ^{1/( 2+\delta ) }+1) ^{2+\delta }]
	~\leq~ \tilde{C} n^{-\tilde{c}}\label{eq:RestrictionOnParams}
\end{equation}
then,
\begin{equation*}
|P(T_n(\theta) > c_n^{B,L} (\theta , \alpha)) - \alpha |~\leq~ 3 \tilde{C} n^{- \tilde{c}} + \check{C}n^{-\check{c}} \to 0,
\end{equation*}
where the convergence is uniform in all parameters $\theta$ and distributions $P$ that satisfy the conditions of the statement.
\end{theorem}

By repeating arguments at the end of Section \ref{sec:SN}, it follows that our two-step bootstrap method results in a larger rejection probability than the one-step bootstrap method for all sample sizes.\footnote{To establish this result, we now use Lemma \ref{lem:CvBootIncreasing} instead of Lemma \ref{lem:CSNincreasing}.} Also, the comparison between $c_{n}^{B,L}(\theta,\alpha)$ and $c_{n}^{B,2S}(\theta,\alpha)$ is not straightforward as these differ in the same two aspects described Section \ref{sec:SN}. This comparison will be the topic of the next section.

\section{Power comparison}\label{sec:Power}

\citetalias{chernozhukov/chetverikov/kato:2014c} show that all of their inference methods satisfy uniform asymptotic size control under appropriate assumptions. Theorems \ref{thm:Lasso2SSize} and \ref{thm:Lasso2SBootstrap} show that our Lasso-based two-step inference methods also satisfy uniform asymptotic size control under similar assumptions. Given these results, the natural next step is to compare these inference methods in terms of criteria related to power.

One possible such criterion is minimax optimality, i.e., the ability that a test has of rejecting departures from $H_0$ at the fastest possible rate (without losing uniform size control). \citetalias{chernozhukov/chetverikov/kato:2014c} show that all their proposed inference methods are asymptotically optimal in a minimax sense, even in the absence of any inequality selection (i.e.\ defined as in Theorems \ref{thm:SN1Scorollary} and \ref{thm:B1Scorollary} in the presence of moment equalities). Since our Lasso-based inequality selection can only reduce the number of binding moment inequalities (thus increasing rejection), we can also conclude that all of our two-step Lasso-based inference methods (SN, MB, and EB) are also asymptotically optimal in a minimax sense. In other words, minimax optimality is a desirable property that is satisfied by all tests under consideration and, thus, cannot be used as a criterion to distinguish between them.

Thus, we proceed to compare our Lasso-based inference procedures with those proposed by \citetalias{chernozhukov/chetverikov/kato:2014c} in terms of rejection rates. Since all inference methods share the test statistic $T_{n}(\theta)$, the power comparison depends exclusively on the critical values.

\subsection{Comparison with one-step methods}

As pointed out in previous sections, our Lasso-based two-step inference methods will have more or equal power than the corresponding one-step analogue, i.e., 
\begin{eqnarray*}
	P \del[1]{T_n(\theta) > c_{n}^{SN,L}(\theta,\alpha)} &\geq& P \del[1]{ T_n(\theta) > c_{n}^{SN,1S}(\alpha)} \\
	P \del[1]{T_n(\theta) > c_{n}^{B,L}(\theta,\alpha)} &\geq& P \del[1]{T_n(\theta) > c_{n}^{B,1S}(\theta,\alpha)}~~\forall B \in \{MB,EB\},
\end{eqnarray*}
for all $\theta \in \Theta$ and $n \in \mathbb{N}$. 
This is a direct consequence of the fact that one-step critical values are based on considering all moment inequalities as binding, while the Lasso-based first step will restrict attention to the subset of them that are sufficiently close to binding, i.e., $\hat{J}_{L}(\theta) \subseteq \{1,\dots,p\}$.

\subsection{Comparison with two-step methods}

The comparison between our two-step Lasso procedures and the two-step methods in \citetalias{chernozhukov/chetverikov/kato:2014c} is not straightforward for two reasons. First, the set of binding inequalities according to the Lasso might be different from the other methods. Second, our Lasso-based methods considers the usual $\alpha$ quantile while the other two-step methods consider the $\alpha - 2\beta_{n}$ quantile for a sequence of positive constants $\{ \beta_{n} \}_{ n\geq 1}$.

To simplify the discussion, we focus exclusively on the case where the moment (in)equality model is only composed of inequalities, i.e., $k=p$, which is precisely the setup in \citetalias{chernozhukov/chetverikov/kato:2014c}. This is done for simplicity of exposition as the introduction of moment equalities would not qualitatively change the conclusions that follow.

We begin by comparing the power of the two-step SN method with the two-step Lasso SN method. For all $\theta \in \Theta$ and $n \in \mathbb{N}$, our two-step Lasso SN method will have greater or equal power than the two-step SN method if and only if $ c_{n}^{SN,L}(\theta,\alpha)~\leq~ c_{n}^{SN,2S}(\alpha)$. By inspecting the formulas in \citetalias{chernozhukov/chetverikov/kato:2014c}, this occurs if and only if
\begin{equation}
	|\hat{J}_L(\theta)|~\leq~ \frac{\alpha}{\alpha-2\beta_n}|\hat{J}_{SN}(\theta)|,
	 \label{eq:PowerAdvSN2}
\end{equation}
where, by definition, $\{ \beta_{n} \}_{ n\geq 1}$ satisfies $\beta_n\in ( 0,\alpha/2)$ (see \citetalias{chernozhukov/chetverikov/kato:2014c} (page 15)). We provide sufficient conditions for Eq.\ \eqref{eq:PowerAdvSN2} in the following result.

\begin{theorem}\label{thm:powercompSN} 
	\underline{Part 1:} For all $\theta \in \Theta$ and $n \in \mathbb{N}$,
	\begin{equation}
		\hat{J}_L(\theta)~~\subseteq~~ \hat{J}_{SN}(\theta) 
		\label{eq:JComparisonSN}
	\end{equation}
	implies
	\begin{equation}
		P(T_n(\theta) > c_{n}^{SN,L}(\theta,\alpha)) ~~\geq~~ P (T_n(\theta) > c_{n}^{SN,2S}(\alpha)).
		\label{eq:PowerComparisonSN}
	\end{equation}
	\underline{Part 2:} In turn, Eq.\ \eqref{eq:JComparisonSN} is implied by
		\begin{align}
\Phi ^{-1}( 1-\beta _{n}/p)\sqrt{ {M}_{n,2+\delta }^{2}n^{-\delta /(2+\delta )}-n^{-1} }~\geq~ (1+3\varepsilon/4 )\sqrt{1-\left( \Phi ^{-1}( 1-\beta _{n}/p) \right) ^{2}/n},\label{eq:highlevel}
	\end{align}
	where $\varepsilon>0$ is as defined in Eq.\ \eqref{eq:LambdaConcrete}.
\end{theorem}

Theorem \ref{thm:powercompSN} provides two sufficient conditions (i.e., \eqref{eq:JComparisonSN} or \eqref{eq:highlevel}) under which our two-step Lasso SN method has greater or equal power than the two-step SN method in \citetalias{chernozhukov/chetverikov/kato:2014c} (i.e., \eqref{eq:PowerComparisonSN}). By simple algebraic manipulations, we can equivalently reexpress Eq.\ \eqref{eq:highlevel} as follows:
\begin{equation}
\beta _{n}/p~\leq~ 1-\Phi \left( (1+3\varepsilon /4)\left( {{M} _{n,2+\delta }^{2}}{n^{-\delta /(2+\delta )}}+{( 3\varepsilon /2+9\varepsilon ^{2}/16) }/{n}\right) ^{-1/2}\right).
\label{eq:highlevel2}
\end{equation}
In this sense, Eq.\ \eqref{eq:highlevel} can be interpreted as imposing an upper bound on the sequence $\{ \beta _{n}/p\} _{n\geq 1}$.

It is worth pointing out that the power comparison in Theorem \ref{thm:powercompSN} is a finite sample result. In other words, under either one of the sufficient conditions of Theorem \ref{thm:powercompSN}, the rejection of $H_0$ by an inference method with SN-based first step also implies the rejection of $H_0$ by the corresponding inference method with Lasso-based first step. This result can be expressed in terms of confidence sets. Under the sufficient conditions of Theorem \ref{thm:powercompSN}, the confidence set based on an inference method with our Lasso-based first step will be a subset of the confidence set based on the corresponding inference method with an SN-based first step.\footnote{While Theorem \ref{thm:powercompSN} is a finite sample result, it might be relevant to understand its implications as $n\to \infty$. Note that the assumptions in Section \ref{sec:Assumptions} do not impose restrictions on the sequence $ \{ \beta _{n}\} _{n\geq 1}$ used to implement the SN-based first step. Therefore, given any sequence of parameters of the inference problem that satisfy our assumptions, we can always find a sequence $\{ \beta _{n}\} _{n\geq 1}$ with $\beta _{n}\downarrow 0$ that satisfies Eq.\ \eqref{eq:highlevel} (or, equivalently, Eq.\ \eqref{eq:highlevel2}). On the other hand, if we were to replace Assumption \ref{ass:Rates} with Eq.\ (27) in \citetalias{chernozhukov/chetverikov/kato:2014c}, it is possible to show that Eq.\ \eqref{eq:highlevel} (or, equivalently, Eq.\ \eqref{eq:highlevel2}) will fail to hold for all sufficiently large sample sizes.}

In principle, Theorem \ref{thm:powercompSN} allows for the possibility of the inequality in Eq.\ \eqref{eq:PowerComparisonSN} being an equality. However, in cases in which the Lasso-based first step selects a strict subset of the moment inequalities chosen by the SN method (i.e., the inclusion in Eq.\ \eqref{eq:JComparisonSN} is strict), the inequality in Eq.\ \eqref{eq:PowerComparisonSN} can be strict. In fact, the inequality in Eq.\ \eqref{eq:PowerComparisonSN} can be strict even in cases in which the Lasso-based and SN-based first step agree on the set of binding moment inequalities. The intuition for this is that our Lasso-based method considers the usual $\alpha$-quantile while the other two-step methods consider the $(\alpha - 2\beta_{n})$-quantile for the sequence of positive constants $\{ \beta_{n} \}_{ n\geq 1}$. This slight difference always plays in favor of the Lasso-based first step having more power.\footnote{This is clearly shown in Designs 5-6 of our Monte Carlos. In these cases, both first-step methods to agree on the correct set of binding moment inequalities (i.e.\ $\hat{J}_L(\theta)=\hat{J}_{SN}(\theta)$). Nevertheless, the slight difference in quantiles produced a small but positive power advantage in favor of the Lasso-based first step.}

The relevance of Theorem \ref{thm:powercompSN} depends on the generality of the sufficient conditions in that result. To illustrate this result, Figure \ref{fig:SN} depicts combinations of $M_{n,\delta}$ and $p$ under which Eq.\ \eqref{eq:highlevel} fails to hold when $n=400$, $\beta_n = 0.1\%$, $\varepsilon =2/3$, and $\delta = 1$. This graph shows that Eq.\ \eqref{eq:highlevel} is satisfied for a large section of the points of the parameter space. 

\begin{figure}
\begin{center}
\includegraphics[scale=0.7]{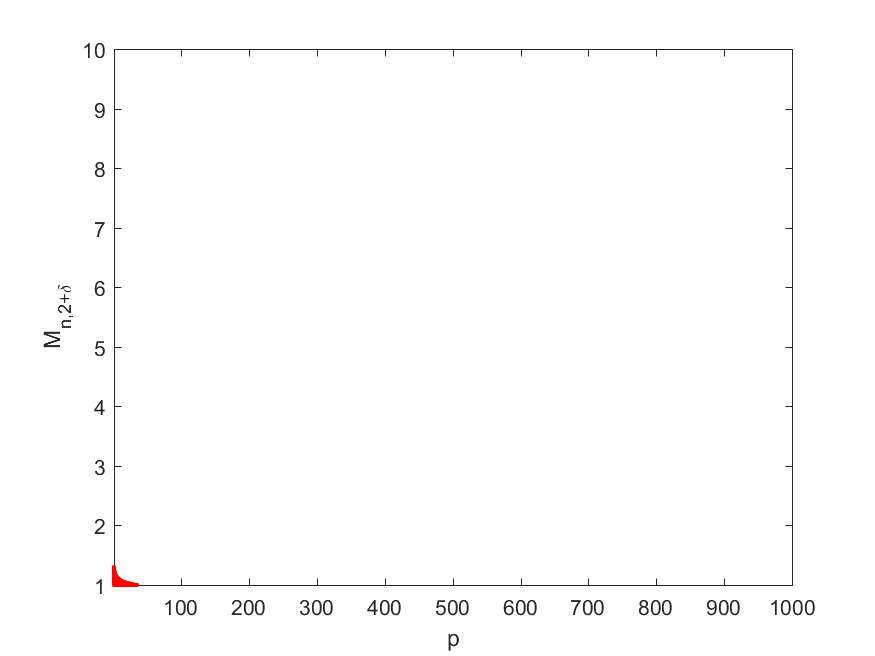}
\caption{\small Marked in red color, combinations $M_{n,2+\delta} \in [1,10]$ and $p\in \{1,\dots,n\}$ such that Eq.\ \eqref{eq:highlevel} does not hold in the case with $n=400$, $\beta_n = 0.1\%$, $\varepsilon =2/3$, and $\delta = 1$. These parameters are used in the simulations in Section \ref{sec:MonteCarlos}.}
\label{fig:SN}
\end{center}
\end{figure}


To conclude the section, we now compare the power of the two-step bootstrap procedures with the two-step Lasso SN method.
\begin{theorem}\label{thm:power_compB}
	Assume Assumption \ref{ass:Rates3} and let $B \in \{MB,EB\}$. 
	
	\underline{Part 1:} For all $\theta \in \Theta$ and $n \in \mathbb{N}$, 
	\begin{align}
\hat{J}_L(\theta) ~\subseteq~ \hat{J}_{B}(\theta)
		\label{eq:JComparisonB}
	\end{align}
	implies
	\begin{align}
		P (T_n(\theta) > c_{n}^{B,2S}(\alpha)) ~\leq~ P (T_n(\theta) > c_{n}^{B,L}(\theta,\alpha)).
		\label{eq:PowerComparisonBpre}
	\end{align}
	
	\underline{Part 2:} Eq.\ \eqref{eq:JComparisonB} occurs with probability approaching one, i.e., for ${C},{c}>0$ as in Assumption \ref{ass:Rates3},
	\begin{equation}
		P(\hat{J}_L(\theta)\subseteq \hat{J}_{B}(\theta)) ~\geq~ 1-{C}n^{-{c}} \label{eq:JComparisonBstock}
	\end{equation}
under the following sufficient conditions: (i) $\beta_n\geq {{C}}n^{-{c}}$ and (ii) either 
\begin{align}
&1-\Phi ( ( 1+3\varepsilon /4) ( M_{n,2+\delta
}^{2}n^{-\delta /( 2+\delta ) }-n^{-1}) ^{-1/2}) \geq 3\beta _{n}\label{eq:2Ssuff}
~~\text{ or,}\\
&\sqrt{( 1-\rho ( \theta ) ) \ln (p)/2}-\sqrt{2\ln
( 1/( 1-3\beta _{n}) ) }\geq ( 1+3\varepsilon
/4) ( M_{n,2+\delta }^{2}n^{-\delta /( 2+\delta )
}-n^{-1}) ^{-1/2},
\label{eq:2Ssuff2}
\end{align}
where $\rho(\theta) \equiv \max_{j_1\neq j_2}corr[X_{j_1}(\theta),X_{j_2}(\theta)]$.
	
	\underline{Part 3:} Under the sufficient conditions in part 2,
	\begin{equation}
		P(T_n(\theta) > c_{n}^{B,2S}(\alpha)) ~\leq~ P(T_n(\theta) > c_{n}^{B,L}(\theta,\alpha))+ {C}n^{-{c}}. \label{eq:PowerComparisonB}
	\end{equation}
\end{theorem}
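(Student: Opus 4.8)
The plan is to reduce the whole statement to a single deterministic event—that the Lasso first step selects a subset of the bootstrap first step—and a lower bound on a bootstrap quantile, treating the three parts in turn.

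For Part 1 I would argue by pure monotonicity of the bootstrap critical value. Recall $c_n^{B,L}(\theta,\alpha)=c_n^B(\theta,\hat J_L(\theta),\alpha)$ and $c_n^{B,2S}(\alpha)=c_n^B(\theta,\hat J_B(\theta),\alpha-2\beta_n)$. The map $J\mapsto c_n^B(\theta,J,\alpha)$ is weakly increasing under set inclusion, since enlarging $J$ enlarges the maximum defining $W_n^B(\theta,J)$ and hence its conditional quantile (this is Lemma \ref{lem:CvBootIncreasing}), and $a\mapsto c_n^B(\theta,J,a)$ is weakly decreasing, since a smaller level is a higher conditional quantile. Under \eqref{eq:JComparisonB}, chaining these gives $c_n^B(\theta,\hat J_L,\alpha)\le c_n^B(\theta,\hat J_B,\alpha)\le c_n^B(\theta,\hat J_B,\alpha-2\beta_n)$, i.e. $c_n^{B,L}(\theta,\alpha)\le c_n^{B,2S}(\alpha)$; as both tests reject on $\{T_n>c\}$, the smaller critical value rejects weakly more often, which is \eqref{eq:PowerComparisonBpre}.

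For Part 2 I would first convert the random-set inclusion into a scalar threshold comparison. By \eqref{eq:BindingLasso2}, $j\in\hat J_L(\theta)$ iff $\sqrt n\hat\mu_j/\hat\sigma_j\ge-\tfrac32\sqrt n\lambda_n$, whereas $j\in\hat J_B(\theta)$ iff $\sqrt n\hat\mu_j/\hat\sigma_j>-2c_n^{B,1S}(\theta,\beta_n)$, where $c_n^{B,1S}(\theta,\beta_n)$ is the $(1-\beta_n)$-quantile of $W_n^B(\theta,\{1,\dots,p\})$ used in \eqref{eq:B2S-CV}; hence, up to the probability-zero boundary, $\hat J_L(\theta)\subseteq\hat J_B(\theta)$ on the event $\{c_n^{B,1S}(\theta,\beta_n)\ge\tfrac34\sqrt n\lambda_n\}$. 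Using $M_{n,2+\delta}^2n^{2/(2+\delta)}\ge2$ and \eqref{eq:LambdaConcrete}, I would factor $M_{n,2+\delta}^2n^{-\delta/(2+\delta)}-n^{-1}=n^{-\delta/(2+\delta)}(M_{n,2+\delta}^2-n^{-2/(2+\delta)})\ge\tfrac12 M_{n,2+\delta}^2 n^{-\delta/(2+\delta)}$ to get $\tfrac34\sqrt n\lambda_n\le\tau_n$, with $\tau_n\equiv\tfrac{3}{2^{3/2}}(\tfrac43+\varepsilon)n^{\delta/(2(2+\delta))}M_{n,2+\delta}^{-1}$—exactly the right-hand sides of \eqref{eq:2Ssuff} and \eqref{eq:2Ssuff2}. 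It thus suffices to show $c_n^{B,1S}(\theta,\beta_n)\ge\tau_n$ with probability at least $1-Cn^{-c}$, which follows once the conditional (bootstrap) probability satisfies $P^{*}\del{W_n^B(\theta,\{1,\dots,p\})>\tau_n}\ge\beta_n$ with that probability. Under \eqref{eq:2Ssuff} I would use a single-coordinate bound: for MB each coordinate $\tfrac1{\sqrt n}\sum_i\epsilon_i(X_{ij}-\hat\mu_j)/\hat\sigma_j$ is conditionally exactly $N(0,1)$, so $P^{*}(W_n^{MB}>\tau_n)\ge1-\Phi(\tau_n)\ge3\beta_n$ deterministically; for EB the coordinate is only approximately Gaussian, so I would invoke the high-dimensional Gaussian/bootstrap comparison of \citetalias{chernozhukov/chetverikov/kato:2014c} (valid under Assumption \ref{ass:Rates3}) to get $P^{*}(W_n^{EB}>\tau_n)\ge1-\Phi(\tau_n)-Cn^{-c}\ge3\beta_n-Cn^{-c}\ge\beta_n$, using $\beta_n\ge Cn^{-c}$. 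Under \eqref{eq:2Ssuff2} the coordinatewise bound is too weak and I would exploit the joint structure: after the same Gaussian approximation, the bootstrap maximum behaves like $\max_j G_j$ for a centered Gaussian vector with unit variances and pairwise correlations that, with probability $1-Cn^{-c}$, do not exceed $\rho(\theta)$ (sample correlations concentrate on population ones); by Slepian's inequality this maximum stochastically dominates the maximum of the equicorrelated Gaussian with correlation $\rho(\theta)$, whose $(1-3\beta_n)$-quantile—controlled through the $\sqrt{2\log p}$ growth of the maximum of $p$ standard Gaussians and Borell--TIS concentration—is guaranteed to exceed $\tau_n$ precisely by \eqref{eq:2Ssuff2}. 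Either route yields $P^{*}(W_n^B>\tau_n)\ge\beta_n$ with probability $1-Cn^{-c}$, establishing \eqref{eq:JComparisonBstock}.

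For Part 3 I would set $A_n\equiv\{\hat J_L(\theta)\subseteq\hat J_B(\theta)\}$, so $P(A_n^c)\le Cn^{-c}$ by Part 2. On $A_n$, Part 1 gives $c_n^{B,L}(\theta,\alpha)\le c_n^{B,2S}(\alpha)$, whence $\{T_n>c_n^{B,2S}\}\cap A_n\subseteq\{T_n>c_n^{B,L}\}$, and therefore $P(T_n>c_n^{B,2S})\le P(T_n>c_n^{B,L})+P(A_n^c)\le P(T_n>c_n^{B,L})+Cn^{-c}$, which is \eqref{eq:PowerComparisonB}. The routine steps are Parts 1 and 3 (monotonicity and a union bound); the substance is the quantile lower bound in Part 2. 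The hardest step will be the argument under \eqref{eq:2Ssuff2}: passing from the empirical/multiplier bootstrap to a Gaussian maximum uniformly in $P$ at rate $n^{-c}$, controlling the gap between sample and population correlations, and then combining a Slepian comparison with matching anti-concentration and concentration bounds so that the $\rho(\theta)$-dependent, $\log p$-driven spread of the correlated Gaussian maximum is converted into the stated quantile inequality.
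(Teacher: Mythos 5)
Your Parts 1 and 3 coincide with the paper's proof (monotonicity of $c_n^{B}(\theta,J,\cdot)$ in $J$ via Lemma \ref{lem:CvBootIncreasing} together with the quantile ordering $\alpha\le\alpha-2\beta_n$ in level, then a union bound over the complement of $\{\hat J_L(\theta)\subseteq\hat J_B(\theta)\}$), and the skeleton of your Part 2 is also the paper's: reduce the set inclusion to the event $\{c_n^{B,1S}(\theta,\beta_n)\ge \tfrac34\sqrt n\lambda_n\}$ and bound $\tfrac34\sqrt n\lambda_n\le\tau_n\equiv\tfrac{3}{2^{3/2}}(\tfrac43+\varepsilon)n^{\delta/(2(2+\delta))}M_{n,2+\delta}^{-1}$ using $M_{n,2+\delta}^2n^{2/(2+\delta)}\ge 2$. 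Where you diverge is in how the bootstrap quantile is shown to exceed $\tau_n$. The paper routes both sufficient conditions through a single object: $c_0(3\beta_n)$, the $(1-3\beta_n)$-quantile of $\max_j Y_j$ with $Y\sim N(\mathbf 0,E[ZZ'])$, i.e.\ the \emph{population} Gaussian maximum. It invokes \citetalias{chernozhukov/chetverikov/kato:2014c} (Eq.\ (66)) once to get $P(c_n^{B}(\beta_n)\ge c_0(\beta_n+\gamma_n))\ge 1-Cn^{-c}$ with $\beta_n+\gamma_n\le 3\beta_n$ (this is where $\beta_n\ge Cn^{-c}$ enters), and then lower-bounds $c_0(3\beta_n)$ either by a single coordinate, $c_0(3\beta_n)\ge\Phi^{-1}(1-3\beta_n)$ (condition \eqref{eq:2Ssuff}), or by Sudakov minorization plus Borell--TIS (condition \eqref{eq:2Ssuff2}), the latter two applied to the population covariance, for which $\rho(\theta)$ is the exact maximal correlation. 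Your route instead works with the conditional bootstrap law directly: exact conditional Gaussianity for MB and a bootstrap--Gaussian comparison for EB under \eqref{eq:2Ssuff}, and Gaussian approximation plus Slepian against an equicorrelated reference under \eqref{eq:2Ssuff2}. The first of these is arguably cleaner for MB than the paper's argument; but the second takes on an extra obligation the paper deliberately avoids: your Slepian step is applied to the \emph{sample} correlation structure, so you must show that all $O(p^2)$ pairwise sample correlations are bounded by $\rho(\theta)$ (or by $\rho(\theta)+o(1)$, with the slack absorbed into the inequality) uniformly with probability $1-Cn^{-c}$, and then still prove a quantile lower bound for the equicorrelated maximum that matches the left side of \eqref{eq:2Ssuff2}. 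That concentration step is not free in the regime $p\gg n$ and is precisely what the paper sidesteps by comparing to the population Gaussian via Eq.\ (66) and then applying Sudakov/Borell--TIS to $E[ZZ']$. Your sketch flags this as the hardest step but does not supply it; if you fill it in, the argument goes through, though at that point the paper's version is the shorter path.
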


Theorem \ref{thm:power_compB} provides sufficient conditions under which any power advantage of the two-step bootstrap method in \citetalias{chernozhukov/chetverikov/kato:2014c} relative to our two-step bootstrap Lasso vanishes as the sample size diverges to infinity. Specifically, Eq.\ \eqref{eq:PowerComparisonB} indicates that, under any of the sufficient conditions, this power advantage does not exceed ${C}n^{-{c}}$. As in the SN approximation, this relative power difference is a direct consequence of Eq.\ \eqref{eq:JComparisonB}, i.e., our Lasso-based first step inequality selection procedure chooses a subset of the inequalities selected by the bootstrap-based first step. 

The relevance of the result in Theorem \ref{thm:power_compB} depends on the generality of the sufficient conditions in part 2. This condition has two requirements. The first one, i.e., $\beta_n\geq{C}n^{-{c}}$ is considered mild as $\{\beta_n\}_{n\ge 1}$ is a sequence of positive constants and $ {C}n^{-{c}}$ converges to zero. The second requirement is either Eq.\ \eqref{eq:2Ssuff} or \eqref{eq:2Ssuff2}. The latter one can be understood as imposing an upper bound on the maximal pairwise correlation within the moment inequalities of the model.

\section{Monte Carlo simulations}\label{sec:MonteCarlos}

We now use Monte Carlo simulations to investigate the finite sample properties of our tests and to compare them to those proposed by \citetalias{chernozhukov/chetverikov/kato:2014c}. Our Monte Carlo simulation setup follows closely the one used in \cite{chernozhukov/chetverikov/kato:2014c_v4}, which we describe next. For a hypothetical fixed parameter value $\theta \in \Theta$, the data satisfy
\begin{align*}
X_{i}(\theta)~=~\mu(\theta) + A'\epsilon_{i}~~~~~i=1,\dots,n = 400,
\end{align*}
where $\Sigma(\theta)=A'A$, $\epsilon_i=(\epsilon_{i,1},\dots, \epsilon_{i,p})$, and $p \in \{ 200,~500,~1,000\}$. We simulate $\{\epsilon_i\}_{i=1}^{n}$ i.i.d.\ with $E[\epsilon_i]= {\bf 0}_{p}$ and $Var[\epsilon_i]=\mathbf{I}_{p\times p}$. Thus, $\{X_{i}(\theta)\}_{i=1}^{n}$ are i.i.d.\ with $E[X_{i}(\theta)]=\mu(\theta)$ and $Var[X_{i}(\theta)]=\Sigma(\theta)$. This model satisfies the moment (in)equality model in Eq.\ \eqref{eq:MI} if and only if $\mu(\theta)\leq {\bf 0}_{p}$. In this context, we are interested in implementing the hypothesis test in Eqs.\ \eqref{eq:HypTest} (or, equivalently, Eq.\ \eqref{eq:HypTest2a} or Eq.\ \eqref{eq:HypTest2}) with a significance level of $\alpha = 5\%$.

We simulate $\epsilon_i =(\epsilon_{i,1},\dots, \epsilon_{i,p})$ to be i.i.d.\ according to two distributions: (i) $\epsilon_{i,j}$ follows a $t$-distribution with four degrees of freedom divided by $\sqrt{2}$, i.e., $ \epsilon_{i,j} \sim t_4/\sqrt{2}$ and (ii) $\epsilon_{i,j} \sim U(-\sqrt{3},\sqrt{3})$. Note that both of these choices satisfy $E[\epsilon_i]= {\bf 0}_{p}$ and $Var[\epsilon_i]=\mathbf{I}_{p\times p}$. Since $(\epsilon_{i,1},\dots, \epsilon_{i,p})$ are i.i.d., the correlation structure across moment inequalities depends entirely on $\Sigma(\theta)$, for which we consider two possibilities: (i) $\Sigma(\theta)_{[j,k]}=1[j=k]+\rho \times 1[j\neq k]$ and (ii) a Toeplitz structure, i.e., $\Sigma(\theta)_{[j,k]}=\rho^{|j-k|}$ with $\rho \in \{ 0,0.5,0.9\}$. We repeat all experiments $2,000$ times.

The description of the model is completed by specifying $\mu(\theta)$, given in Table \ref{tab:ParameterChoices}. We consider ten different specifications of $\mu(\theta)$ which, in combination with the rest of the parameters, results in fourteen simulation designs. Our first eight simulation designs correspond exactly to those in \cite{chernozhukov/chetverikov/kato:2014c_v4}, half of which satisfy $H_0$ and half of which do not. We complement these simulations with six designs that do not satisfy $H_0$. The additional designs are constructed so that the moment inequalities that satisfy $H_0$ are only slightly or moderately negative.\footnote{For reasons of brevity, these additional designs only consider $\Sigma(\theta)$ with a Toeplitz structure. We carried out the same designs with equicorrelated $\Sigma(\theta)$ and obtained qualitatively similar results. These are available from the authors, upon request.} As the slackness of these inequalities decreases, it becomes harder for two-step inference methods to correctly classify the non-binding moment conditions as such. As a consequence, these new designs will help us understand which two-step inference procedures have better abilities in detecting slack moment inequalities.

\begin{table}[h]
	\begin{center}
	\scalebox{0.85}{\begin{tabular}{ccccc}
		\hline\hline
		Design no. & $\{\mu_{j}(\theta):j \in \{1,\dots,p\}\}$ & $\Sigma(\theta)$ & Hypothesis & Design in \cite{chernozhukov/chetverikov/kato:2014c_v4}\\
		\hline
		1 &$-0.8\times 1[j>0.1p]$ & Equicorrelated & $H_0$ & 2\\
		2 &$-0.8\times 1[j>0.1p]$ & Toeplitz & $H_0$ & 4\\
		3 &$0$ & Equicorrelated & $H_0$ & 1\\
		4 &$0$ & Toeplitz & $H_0$ & 3\\
		5 &$0.05$ & Equicorrelated & $H_1$ & 5\\
		6 &$0.05$ & Toeplitz & $H_1$ & 7\\
		7 &$-0.75\times 1[j>0.1p]+0.05\times 1[j\leq 0.1p]$ & Equicorrelated & $H_1$ & 6\\
		8 &$-0.75\times 1[j>0.1p]+0.05\times 1[j\leq 0.1p]$ & Toeplitz & $H_1$ & 8\\
		9 &$-0.6\times 1[j>0.1p]+0.05\times 1[j\leq 0.1p]$ & Toeplitz & $H_1$ & New\\
		10 &$-0.5\times 1[j>0.1p]+0.05\times 1[j\leq 0.1p]$ & Toeplitz & $H_1$ & New\\
		11 &$-0.4\times 1[j>0.1p]+0.05\times 1[j\leq 0.1p]$ & Toeplitz & $H_1$ & New\\
		12 &$-0.3\times 1[j>0.1p]+0.05\times 1[j\leq 0.1p]$ & Toeplitz & $H_1$ & New\\
		13 &$-0.2\times 1[j>0.1p]+0.05\times 1[j\leq 0.1p]$ & Toeplitz & $H_1$ & New\\
		14 &$-0.1\times 1[j>0.1p]+0.05\times 1[j\leq 0.1p]$ & Toeplitz & $H_1$ & New\\
	\hline\hline
	\end{tabular}}
	\end{center}
	\caption{Parameter choices in our simulations.}
	\label{tab:ParameterChoices}
\end{table}

We implement all the inference methods described in Table \ref{tab:InfMethods}. These include all of the procedures described in previous sections and some additional ``hybrid'' methods (i.e.\ MB-H and EB-H). The bootstrap based methods are implemented with $B=1,000$ bootstrap replications. Finally, for our Lasso-based first step, we use
\begin{align}
{\lambda}_n~=~(4/3+\varepsilon) n^{-1/2}\left( \hat{M}_{n,3}^2n^{-1/3}-n^{-1} \right)^{-1/2},
\label{eq:LambdaMCs}
\end{align}
with $\varepsilon\in\{2/3, 8/3\}$ and $\hat{M}_{n,3}$ as defined in Eq.\ \eqref{eq:Mconcrete} but with singleton $\Theta$. This corresponds to the sample analogue of Eq.\ \eqref{eq:LambdaConcrete} when $\delta=1$.

\begin{table}
	\begin{center}
	\scalebox{0.9}{
	\begin{tabular}{lcccc}
		\hline\hline 
		Method & No.\ of steps & First step & Second step & Parameters\\
		\hline
		SN Lasso & Two & Lasso & SN  &   $\varepsilon\in\{2/3, 8/3\}$ in  Eq.\ \eqref{eq:LambdaMCs}  \\
		MB Lasso & Two & Lasso & MB  & $\varepsilon\in\{2/3, 8/3\}$ in Eq.\ \eqref{eq:LambdaMCs}  \\
		EB Lasso & Two & Lasso & EB  & $\varepsilon\in\{2/3, 8/3\}$ in Eq.\ \eqref{eq:LambdaMCs} \\
		SN-1S & One & None & SN & None \\
		SN-2S & Two & SN & SN & None \\
		MB-1S & One & None & MB & None \\
		MB-H  & Two & SN & MB  & $\beta_{n} \in \{0.01\%,0.1\%,1\%\}$ \\
		MB-2S & Two & MB & MB & $\beta_{n} \in \{0.01\%,0.1\%,1\%\}$\\
		EB-1S & One & None & EB & None \\
		EB-H  & Two & SN & EB  & $\beta_{n} \in \{0.01\%,0.1\%,1\%\}$ \\
		EB-2S & Two & EB & EB & $\beta_{n} \in \{0.01\%,0.1\%,1\%\}$\\
		\hline\hline 
	\end{tabular}}
	\end{center}
	\caption{Inference methods implemented in our simulations.}
	\label{tab:InfMethods}
\end{table}

We shall begin by considering the simulation designs in \citetalias{chernozhukov/chetverikov/kato:2014c} as reported in Tables \ref{tab1}-\ref{tab8}. The first four tables are concerned with the finite sample size control. The general finding is that all tests under consideration are very rarely over-sized. The maximal size observed for our procedures is 7.15 (e.g.\ EB Lasso in Designs 3-4, $p=1,000$, $\rho=0$, and uniform errors) while the corresponding number for \citetalias{chernozhukov/chetverikov/kato:2014c} is 7.25 (e.g.\ EB-1S, EB-H with $\beta=0.01$, EB-2S with $\beta=0.01$ in Designs 3-4, $p=1,000$, $\rho=0$, and uniform errors). Some procedures, such as SN-1S, can be heavily under-sized. Our simulations reveal that in order to achieve empirical rejection rates close to $\alpha = 5\%$ under $H_0$, one requires using a two-step inference procedure with a bootstrap-based second step (either MB or EB). 

Before turning to the individual setups for power comparison, let us remark that a first step based on our Lasso procedure compares favorably with a first step based on SN. For example, SN-Lasso with $\varepsilon = 2/3$ has more or equal power than SN-2S with $\beta_n=0.1\%$. While the differences may often be small, this finding is in line with the power comparison in Section \ref{sec:Power}.

Tables \ref{tab5}-\ref{tab8} contain the designs used by \citetalias{chernozhukov/chetverikov/kato:2014c} to gauge the power of their tests. Tables \ref{tab5} and \ref{tab6} consider the case where all moment inequalities are violated. Since none of the moment conditions are slack, there is no room for power gains based on a first-step inequality selection procedure. In this sense, it is not surprising that the first-step choice makes no difference in these designs. For example, the power of SN-Lasso is identical to the one of SN-1S while the power of SN-2S is also close to the one of SN-1S. However, the SN-2S has lower power than SN-1S for some values of $\beta_{n}$ while the power of SN Lasso appears to be invariant to the choice of $\varepsilon$. The latter is in accordance with our previous findings. The bootstrap still improves power for high values of $\rho$.

Next, we consider Tables \ref{tab7} and \ref{tab8}. In this setting, $90\%$ of the moment conditions have $\mu_j(\theta)=-0.75$ and our results seem to suggest that this value is relative far away from being binding. We deduce this from the fact that all first-step selection methods agree on the set of binding moment conditions, producing very similar power results. Table \ref{tab15} shows the percentage of moment inequalities retained by each of the first-step procedures in Design 8. When the error terms are $t$-distributed, all first-step procedures retain around $10\%$ of the inequalities which is also the fraction that are truly binding (and, in this case, violated). Thus, all two-step inference procedures are reasonably powerful. When the error terms are uniformly distributed, all first-step procedures have an equal tendency to aggressively remove slack inequalities. However, we have seen from the size comparisons that this does not seem to result in oversized tests. Finally, we notice that the power of our procedures hardly varies with the choice of $\varepsilon$.

The overall message of the simulation results in Designs 1-8 is that our Lasso-based procedures are comparable in terms of size and power to the ones proposed by \citetalias{chernozhukov/chetverikov/kato:2014c}.

Tables \ref{tab9}-\ref{tab14} present simulations results for Designs 9-14. These correspond to modifications of the setup in Design 8 in which progressively decrease the degree of slackness of the non-binding moment inequalities from $-0.75$ to values between $-0.6$ and $-0.1$.

Tables \ref{tab9}-\ref{tab10} shows results for Designs 9 and 10. As in the case of Design 8, the degree of slackness of the non-binding moment inequalities is still large enough so that it can be correctly detected by all first first-step selection methods.  

As Table \ref{tab11} shows, this pattern changes in Design 11. In this case, the MB Lasso with $\varepsilon= 2/3$ has a rejection rate that is at least 20 percentage points higher than the most powerful procedure in \citetalias{chernozhukov/chetverikov/kato:2014c}. For example, with $t$-distributed errors, $p=1,000$, and $\rho=0$, our MB Lasso with $\varepsilon= 2/3$ has a rejection rate of 71.19\% whereas the MB-2S with  $\beta_n = 0.01\%$ has a rejection rate of 20.77\%. Table \ref{tab16} holds the key to these power differences. Ideally, a powerful procedure should retain only the $10\%$ of the moment inequalities that are binding (in this case, violated). The Lasso-based selection indeed often retains close to $10\%$ of the inequalities for $\varepsilon \in \{2/3,8/3\}$. On the other hand, SN-based selection can sometimes retain more than $90\%$ of the inequalities (e.g.\ see $t$-distributed errors, $p=1,000$, and $\rho=0$).

The power advantage in favor of the Lasso-based first step is also present in Design 12 as shown in Table \ref{tab12}. In this case, the MB Lasso with $\varepsilon= 2/3$ has a rejection rate which is at least $10$ percentage points higher than the most powerful procedure in \citetalias{chernozhukov/chetverikov/kato:2014c}. The MB Lasso has rejection rates up $50$ percentage points higher than its competitors (e.g.\ $p=1,000$ and $\rho=0$). As in the previous design, this power gain mainly comes from the Lasso being better at removing the slack moment conditions.

Table \ref{tab13} shows the results for Design 13. Here the degree of slackness of the non-binding moments is getting so small that power advantages of the Lasso-based procedures are small, yet still present.

Design 14 is our last experiment and it is shown in Table \ref{tab14}. In this case, the degree of slackness of the non-binding moment inequalities is so small that it cannot be detected by any of the first-step selection methods. As a consequence, there are very little differences among the various inference procedures and all of them exhibit relatively low power.

The overall message from Tables \ref{tab9}-\ref{tab14} is that our Lasso-based inference procedures can have higher power than those in \citetalias{chernozhukov/chetverikov/kato:2014c} when the slack moment inequalities are difficult to distinguish from zero.

\section{Conclusions}\label{sec:Conclusions}

This paper considers the problem of inference in a partially identified moment (in)equality model with possibly many moment inequalities. We contribute to this literature by proposing new critical values that are the result of combining the approximation methods in \citetalias{chernozhukov/chetverikov/kato:2014c} (i.e.\ self-normalization, multiplier bootstrap, or empirical bootstrap), with a novel first-step moment inequality selection procedure based on the Lasso. Besides the proposing a different first-step moment selection procedure, our inference method uses a second step that can ignore the presence of the first-step moment inequality selection, thus increasing statistical power. We refer to the resulting hypothesis test as a two-step Lasso-based inference methods. Our two-step inference methods can be used to conduct hypothesis tests and to construct confidence sets for the true parameter value.

Our inference method has very desirable properties. First, under reasonable conditions, it is asymptotically uniformly valid, both in the underlying parameter $\theta $ and in the distribution of the data. Second, by virtue of results in \citetalias{chernozhukov/chetverikov/kato:2014c}, our test is asymptotically optimal in a minimax sense. Third, the power of our method compares favorably with that of the corresponding two-step method in \citetalias{chernozhukov/chetverikov/kato:2014c}, both in theory and in simulations. On the theory front, we provide sufficient conditions under which the power of our method dominates. These can sometimes represent a significant part of the parameter space. Our simulations indicate that our inference methods are usually as powerful as the corresponding ones in \citetalias{chernozhukov/chetverikov/kato:2014c}, and can sometimes be more powerful. Fourth, our Lasso-based first step is straightforward to implement.

{\tiny

\begin{landscape}
	\begin{table}
\hspace{-0.5cm}\scalebox{0.64}{

}
\caption{Percentage of moment inequalities retained by first-step selection procedures in Design 11: $\mu_{j}(\theta)=-0.4\cdot 1[j>0.1 p]+0.05\cdot 1[j\leq 0.1 p]$, $\Sigma(\theta)$ Toeplitz.}
\label{tab16}
\end{table}
\newpage

\appendix


\section{Appendix}
\begin{small}

Throughout this section, we omit the dependence of all expressions on $\theta$ as this only complicates the notation without changing the technical arguments. Furthermore, ``s.t.'', ``LHS'', and ``RHS'' abbreviate ``such that'', ``left hand side'' and ``right hand side'', respectively. 

\subsection{Auxiliary results}


\begin{lemma} \label{lem:SampleBound}
Assume Assumptions \ref{ass:Basic}-\ref{ass:Rates}. Then, for any $\gamma $ s.t.\ $\sqrt{n}\gamma /\sqrt{1+\gamma ^{2}}\in [ 0,n^{\delta /(2(2+\delta ))}M_{n,2+\delta }^{-1}]$,
\begin{equation}
P\del[2]{\max_{j=1,\dots ,p}\vert \hat{\mu}_{j}-\mu _{j}\vert / \hat{\sigma} _{j}>\gamma} \leq 2p(1-\Phi (\sqrt{n}\gamma /\sqrt{ 1+\gamma ^{2}}))[ 1+Kn^{-\delta /2}M_{n,2+\delta }^{2+\delta }( 1+ \sqrt{n}\gamma /\sqrt{ 1+\gamma ^{2}}) ^{2+\delta }] , \label{eq:Lemma1Eq1}
\end{equation}
where $K$ is a universal constant.
\end{lemma}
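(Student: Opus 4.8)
The plan is to reduce the maximum of the self-normalized statistic to a union bound over coordinatewise tail probabilities of genuine self-normalized sums, and then to invoke a Cramér-type moderate deviation inequality on each coordinate.

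First I would rewrite the event coordinatewise. Writing $Y_{ij}\equiv X_{ij}-\mu_j$, $S_j\equiv\sum_{i=1}^n Y_{ij}$ and $V_j^2\equiv\sum_{i=1}^n Y_{ij}^2$, a direct computation gives $\hat\sigma_j^2=V_j^2/n-(S_j/n)^2$ and hence $\sqrt{n}(\hat\mu_j-\mu_j)/\hat\sigma_j=U_j/\sqrt{1-U_j^2/n}$, where $U_j\equiv S_j/V_j$ is the self-normalized sum of the centered observations. Since $t\mapsto t/\sqrt{1-t^2/n}$ is strictly increasing on $(-\sqrt{n},\sqrt{n})$, solving the resulting quadratic shows that $|\hat\mu_j-\mu_j|/\hat\sigma_j>\gamma$ holds if and only if $|U_j|>x$, where $x\equiv\sqrt{n}\gamma/\sqrt{1+\gamma^2}$. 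A useful observation is that $U_j$ is scale invariant, so it coincides with the self-normalized sum of the standardized variables $Z_{ij}$; this is what keeps the coordinatewise moment bookkeeping clean.

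Next come the union bound and symmetrization: $P(\max_{j\le p}|U_j|>x)\le\sum_{j=1}^p P(|U_j|>x)$, and for each $j$ I split $P(|U_j|>x)=P(U_j>x)+P(-U_j>x)$, applying the deviation bound separately to the summands $Y_{ij}$ and $-Y_{ij}$, which share the same $(2+\delta)$-moment. The heart of the proof is the Cramér-type moderate deviation inequality for self-normalized sums of i.i.d.\ mean-zero variables: for $0\le x\le n^{\delta/(2(2+\delta))}M_{n,2+\delta}^{-1}$ one has $P(U_j>x)\le(1-\Phi(x))[1+Kn^{-\delta/2}M_{n,2+\delta}^{2+\delta}(1+x)^{2+\delta}]$, where both the admissible range of $x$ and the multiplicative error are dictated by the (standardized) $(2+\delta)$-moment of the summands. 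I would bound this moment uniformly in $j$ via $M_{n,2+\delta}$ (using $|\mu_j|\le(E|X_{1j}|^{2+\delta})^{1/(2+\delta)}$ and absorbing the numerical constant into $K$), sum over the $p$ coordinates, double for the two-sided event, and finally substitute back $x=\sqrt{n}\gamma/\sqrt{1+\gamma^2}$, observing that the hypothesized range for $\gamma$ is exactly the preimage of the admissible range for $x$. This delivers the claimed bound.

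The main obstacle is the self-normalized moderate deviation inequality invoked in the third step: this is the deep analytic input (a Cramér-type large-deviation result in the spirit of Jing--Shao--Wang, and of the type used by \citetalias{chernozhukov/chetverikov/kato:2014c}), from which the precise admissible range for $x$, the exponent $2+\delta$, and the $n^{-\delta/2}$ rate are all inherited. The secondary difficulty is the moment bookkeeping: because the statistic is scale invariant, I must phrase the moment factor consistently at the level of the standardized summands, so that the coordinatewise error is governed by $M_{n,2+\delta}$ and the union bound over $p$ terms, combined with the $n^{-\delta/2}$ rate, reproduces the stated expression exactly.
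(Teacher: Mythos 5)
Your proposal is correct and follows essentially the same route as the paper's proof: the identity $\sqrt{n}(\hat\mu_j-\mu_j)/\hat\sigma_j=U_j/\sqrt{1-U_j^2/n}$ with $U_j$ the self-normalized sum, the monotone change of variables to $x=\sqrt{n}\gamma/\sqrt{1+\gamma^2}$, a Bonferroni bound over the $2p$ one-sided events, and the Cram\'er-type moderate deviation inequality for self-normalized sums (the paper's citation of \citetalias{chernozhukov/chetverikov/kato:2014c}, Lemma A.1) applied to $\pm Z_{ij}$. The only cosmetic difference is that the paper records a one-way set inclusion where you note the equivalence, and your remark about phrasing the moment factor at the level of the standardized summands matches the paper's (implicit) bookkeeping.
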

\begin{proof}
	
For any $i=1,\dots ,n$ and $j=1,\dots ,p$, let $Z_{ij}\equiv ( X_{ij}-\mu _{j})/\sigma _{j}$ and $U_{j}\equiv \sqrt{n} \sum_{i=1}^{n} (Z_{ij}/n)/\sqrt{ \sum_{i=1}^{n} (Z_{ij}^{2}/n)}$. We divide the rest of the proof into three steps.

\underline{Step 1.} By definition, $\sqrt{n}(\hat{\mu}_{j}-\mu _{j})/\hat{\sigma}_{j}=U_{j}/\sqrt{ 1-U_{j}^{2}/n}$ and so
\begin{equation}
	\sqrt{n}|\hat{\mu}_{j}-\mu _{j}|/\hat{\sigma}_{j}=|U_{j}|/\sqrt{ 1-|U_{j}|^{2}/n}.\label{eq:reexpressionU}
\end{equation}
Since the RHS of Eq.\ \eqref{eq:reexpressionU} is increasing in $|U_{j}|$, it follows that
\begin{eqnarray}
\cbr[2]{\max_{j=1,\dots ,p}|\hat{\mu}_{j}-\mu _{j}|/\hat{\sigma}_{j}>\gamma } =\cbr[2]{\max_{1\leq j\leq p}|U_{j}|/\sqrt{1-|U_{j}|^{2}/n}>\sqrt{n}\gamma } 
\subseteq \cbr[2]{\max_{1\leq j\leq p}|U_{j}|\geq \sqrt{n}\gamma /\sqrt{1+\gamma ^{2}}}.
\label{eq:SNbound0}
\end{eqnarray}

\underline{Step 2.} For every $j=1,\dots ,p,$ $\{Z_{ij}\}_{i=1}^{n}$ is a sequence of independent random variables with $E[Z_{ij}]=0$ and $1=E[Z_{ij}^{2}]^{1/2}\leq E[|Z_{ij}|^{2+\delta }]^{1/(2+\delta)}\leq M_{n,2+\delta }<\infty $. If we let $ S_{nj}=\sum_{i=1}^{n}Z_{ij}$, $V_{nj}^{2}=\sum_{i=1}^{n}Z_{ij}^{2}$, and $ 0<D_{nj}=[n^{-1}\sum_{i=1}^{n}E[|Z_{ij}|^{2+\delta }]]^{1/(2+\delta )}\leq M_{n,2+\delta }<\infty $, then  \citet[Lemma D.1, page 48]{chernozhukov/chetverikov/kato:2014c_supp} (applied with $\nu=\delta$) implies that for all $t\in [ 0,n^{\delta /(2(2+\delta ))}D_{nj}^{-1}]$,
\begin{equation}
	\envert[2]{\frac{P(S_{nj}/V_{nj}\geq t)}{1-\Phi (t)}-1}\leq Kn^{-\delta /2}D_{nj}^{2+\delta }(1+t)^{2+\delta },
	\label{eq:SNbound1}
\end{equation}
where $K$ is a universal constant.

By using that $U_{j} = S_{nj}/V_{nj}$, $D_{nj}\leq M_{n,2+\delta }$, and applying Eq.\ \eqref{eq:SNbound1} to $t=\sqrt{n}\gamma /\sqrt{1+\gamma ^{2}}$, it follows that for any $\gamma $ s.t.\ $\sqrt{n}\gamma /\sqrt{1+\gamma ^{2}}\in [ 0,n^{\delta /(2(2+\delta ))}M_{n,2+\delta }^{-1}]$,
\begin{eqnarray*}
|P(U_{j}\geq \sqrt{n}\gamma /\sqrt{1+\gamma ^{2}})-(1-\Phi (\sqrt{n}\gamma / \sqrt{1+\gamma ^{2}}))| \leq 
Kn^{-\delta /2}D_{nj}^{2+\delta }(1-\Phi (\sqrt{ n }\gamma /\sqrt{1+\gamma ^{2}}))(1+\sqrt{n}\gamma /\sqrt{1+\gamma ^{2}})^{2+\delta }.
\end{eqnarray*}
Thus, for any $\gamma $ s.t.\ $\sqrt{n}\gamma /\sqrt{1+\gamma ^{2}}\in [ 0,n^{\delta /(2(2+\delta ))}M_{n,2+\delta }^{-1}]$,
\begin{eqnarray}
\sum_{j=1}^{p}P\del[1]{U_{j}\geq \sqrt{n}\gamma /\sqrt{1+\gamma ^{2}}}\leq
p\del[1]{1-\Phi (\sqrt{n}\gamma /\sqrt{1+\gamma ^{2}})}\sbr[1]{1+Kn^{-\delta /2}M_{n,2+\delta }^{2+\delta }(1+\sqrt{n}\gamma /\sqrt{1+\gamma ^{2}})^{2+\delta }}. \label{eq:SNbound2}
\end{eqnarray}
By applying the same argument for $-Z_{ij}$ instead of $Z_{ij}$, it follows that for any $\gamma $ s.t.\ $\sqrt{n}\gamma /\sqrt{1+\gamma ^{2}}\in [ 0,n^{\delta /(2(2+\delta ))}M_{n,2+\delta }^{-1}]$,
\begin{eqnarray}
\sum_{j=1}^{p}P\del[1]{-U_{j}\geq \sqrt{n}\gamma /\sqrt{1+\gamma ^{2}}}\leq 
p\del[1]{1-\Phi (\sqrt{n}\gamma /\sqrt{1+\gamma ^{2}})}\sbr[1]{1+Kn^{-\delta /2}M_{n,2+\delta }^{2+\delta }(1+\sqrt{n}\gamma /\sqrt{1+\gamma ^{2}})^{2+\delta }}. \label{eq:SNbound3}
\end{eqnarray}

\underline{Step 3.} Consider the following argument.
\begin{eqnarray*}
P\del[2]{\max_{j=1,\dots ,p}|\hat{\mu}_{j}-\mu _{j}|/\hat{\sigma}_{j}>\gamma }
&\leq &P\del[2]{\max_{j=1,\dots ,p}|U_{j}|\geq \sqrt{n}\gamma /\sqrt{1+\gamma ^{2}} } \\
&\leq &\sum_{j=1}^{p}P\del[2]{|U_{j}|\geq \sqrt{n}\gamma /\sqrt{1+\gamma ^{2}}} \\
&\leq &\sum_{j=1}^{p}P\del[2]{U_{j}\geq \sqrt{n}\gamma /\sqrt{1+\gamma ^{2}} }+\sum_{j=1}^{p}P\del[2]{-U_{j}\geq \sqrt{n}\gamma /\sqrt{1+\gamma ^{2}}} \\
&\leq &2p\del[2]{1-\Phi (\sqrt{n}\gamma /\sqrt{1+\gamma ^{2}})}\sbr[2]{1+Kn^{-\delta /2}M_{n,2+\delta }^{2+\delta }\del[1]{1+\sqrt{n}\gamma /\sqrt{1+\gamma ^{2}} }^{2+\delta }},
\end{eqnarray*}
where the first inequality follows from Eq.\ \eqref{eq:SNbound0} and the fourth inequality follows from Eqs.\ \eqref{eq:SNbound2} and \eqref{eq:SNbound3}.
\end{proof}


\begin{lemma}\label{lem:SampleBound2} 
Assume Assumptions \ref{ass:Basic}-\ref{ass:Rates} and let $\{\gamma _{n}\}_{n\geq 1}$ denote a sequence in $\mathbb{R}$ that satisfies $\gamma _{n}\geq \gamma _{n}^{\ast }$ with
\begin{equation}
\gamma _{n}^{\ast }\equiv n^{-1/2}(M_{n,2+\delta }^{2}n^{-\delta /(2+\delta )}-n^{-1})^{-1/2}=(nM_{n,2+\delta }^{2+\delta })^{-1/(2+\delta )}(1-(nM_{n,2+\delta }^{2+\delta })^{-2/(2+\delta )})^{-1/2} \downarrow 0. 
\label{Eq:LambdaStar}
\end{equation}
Then, 
\begin{align}
&P\left(\max_{j=1,\dots ,p}|\hat{\mu}_{j}-\mu _{j}|/\hat{\sigma}_{j}>\gamma _{n}]\right)\notag\\
& \leq 2\exp [ \ln (2k-p)( 1-n^{(2c+\delta -1)/(2+\delta)}/( 2C^{2/(2+\delta)}) ) ] [1+K( ( Cn^{-c+( 1-\delta ) /2}) ^{1/( 2+\delta ) }+1) ^{2+\delta }]\to 0,
\label{eq:SampleBoundEq}
\end{align}
where $K,C>0$ and $c>(1-\delta)/2$ are the universal constants in Lemma \ref{lem:SampleBound} and Assumption \ref{ass:Rates}.
\end{lemma}
\begin{proof}
The equality in Eq.\ \eqref{Eq:LambdaStar} follows from algebra. The convergence in Eq.\ \eqref{Eq:LambdaStar} follows from $M_{n,2+\delta }\geq 1$ and so $n M_{n,2+\delta }^{2+\delta }\to \infty$. In turn, $M_{n,2+\delta }\geq 1$ follows from H\"{o}lder's inequality and the definition of $Z_{1,j}$, as it implies that $E( \vert Z_{1j}\vert ^{2+\delta }) ^{1/( 2+\delta ) }\geq E( Z_{1j}^{2}) ^{1/2}=1$.

To show Eq.\ \eqref{eq:SampleBoundEq}, consider the following preliminary derivation.
\begin{align}
P( \max_{j=1,\dots ,p}|\hat{\mu}_{j}-\mu _{j}|/\hat{\sigma}_{j}>\gamma _{n}) &\leq P(\max_{j=1,\dots ,p}|\hat{\mu}_{j}-\mu _{j}|/\hat{\sigma }_{j}>\gamma _{n}^{\ast }) \notag\\
&\leq 2p[1-\Phi (n^{\delta /(2(2+\delta ))}M_{n,2+\delta }^{-1})][1+Kn^{-\delta /2}M_{n,2+\delta }^{2+\delta }(1+n^{\delta /(2(2+\delta ))}M_{n,2+\delta }^{-1})^{2+\delta }] \notag\\
&\leq 2p\exp [-2^{-1}n^{\delta /(2+\delta )}/M_{n,2+\delta }^{2}][1+K(n^{-\delta /(2(2+\delta ))}M_{n,2+\delta }+1)^{2+\delta }],\label{eq:SampleBoundEq_proof}
\end{align}
where the first inequality follows from $\gamma _{n}\geq \gamma _{n}^{\ast }$ , the second inequality follows from Lemma \ref{lem:SampleBound} with $ \gamma =\gamma _{n}^{\ast }$, and the third inequality follows from $1-\Phi (t)\leq \exp({-t^{2}/2})$. Note that the choice $\gamma =\gamma _{n}^{\ast }$ implies that $\sqrt{n}\gamma _{n}^{\ast }/\sqrt{1+(\gamma _{n}^{\ast })^{2}} =n^{\delta /(2(2+\delta ))}M_{n,2+\delta }^{-1}\geq 0$, and so this choice of $\gamma $ lies in the upper bound of the interval $[ 0,n^{\delta /(2(2+\delta ))}M_{n,2+\delta }^{-1}]$ in Lemma \ref{lem:SampleBound}. To complete the proof, it suffices to show that the RHS of Eq.\ \eqref{eq:SampleBoundEq_proof} is bounded by the RHS of Eq.\ \eqref{eq:SampleBoundEq}. To this end, consider first the following derivation.
\begin{align}
2p\exp [-2^{-1}n^{\delta /(2+\delta )}/M_{n,2+\delta }^{2}]& \leq 2(2k-p)\exp [-2^{-1}n^{\delta /(2+\delta )}/M_{n,2+\delta }^{2}] \notag\\
& =2\exp [\ln (2k-p)( 1-2^{-1}( n^{\delta /(2+\delta )}/( M_{n,2+\delta }^{2}\ln (2k-p)) ) ) ] \notag\\
& \leq 2\exp [ \ln (2k-p)( 1-n^{(2c+\delta -1)/(2+\delta)}/( 2C^{2/(2+\delta)}) ) ] \to 0. \label{eq:conv1}
\end{align}
for some $c >(1-\delta)/2$ and $C>0$, where the first inequality follows from $2k-p\geq p$ and the second inequality follows from $2k-p>1$ and Assumption \ref{ass:Rates}.
Next, consider the following derivation.
\begin{equation}
( n^{-\delta /(2(2+\delta ))}M_{n,2+\delta }) ^{2+\delta }=M_{n,2+\delta }^{2+\delta }n^{-\delta /2}\leq M_{n,2+\delta }^{2+\delta }( \ln ( 2k-p) ) ^{( 2+\delta ) /2}n^{-\delta /2}\leq Cn^{-c+( 1-\delta ) /2}\to 0
\label{eq:conv2}
\end{equation}
for some $c >(1-\delta)/2$ and $C>0$, where the first inequality follows from $2k-p\geq 1$, and the second inequality and the convergence follow from Assumption \ref{ass:Rates}. In turn, Eq.\ \eqref{eq:conv2} implies that
\begin{equation}
 1+K( n^{-\delta /(2(2+\delta ))}M_{n,2+\delta }+1) ^{2+\delta } \leq 1+K( ( Cn^{( 1-\delta ) /2-c}) ^{1/( 2+\delta ) }+1) ^{2+\delta }\to 1.
\label{eq:conv3}
\end{equation}
The desired result then follow from Eqs.\ \eqref{eq:conv1} and \eqref{eq:conv3}.
\end{proof}

\begin{lemma}\label{lem:A3implications}
Assumption \ref{ass:Rates2} implies that $B_{n}^{2}( \ln (2k-p)) n^{-( 1-c) /2}\to 0$ and $B_{n}^{2}(\ln (2k-p))^{2}n^{-3/2}\to 0$.
\end{lemma}
\begin{proof}
The first result follows from the next derivation.
\[
B_{n}^{2}( \ln (2k-p)) n^{-( 1-c) /2}~\leq~ Cn^{-c/2}\to 0,
\]
where the inequality holds by Assumption \ref{ass:Rates2} and the convergence occurs by $ c>0$.

To complete the proof, we now show the second result. The result is immediate if we have $2k-p=1$, so we focus the remainder of the proof on $ 2k-p\geq 2$. As a first step, we show that $B_{n}\geq 1$. By H\"{o}lder's inequality and the definition of $Z_{1j}$, $1=E[Z_{1j}^{2}]^{1/2}\leq E[Z_{1j}^{4}]^{1/4}\leq E[\max_{j=1,\ldots ,k}|Z_{1j}|^{4}]^{1/4}$. From here, $B_{n}\geq 1$ follows. As a second step, we show that $(\ln (2k-p))/n\to 0$. To show this, consider the following argument.
\[
(\ln (2k-p))/n~\leq~ (B_{n}^{2}(\ln (2k-p))n^{-( 1-c) /2} ) n^{-( 1+c) /2}\to 0,
\]
where the inequality follows from $B_{n}^{2}\geq 1$ and the convergence follows from $B_{n}^{2}( \ln (2k-p)) n^{-( 1-c) /2}\to 0$. Finally, consider the following derivation.
\[
n^{-3/2}(\ln (2k-p))^{2}B_{n}^{2}~=~( B_{n}^{2}(\ln (2k-p))n^{-( 1-c) /2}) (\ln (2k-p))/n)n^{-c/2}~\to~ 0,
\]
where the convergence follows $B_{n}^{2}( \ln (2k-p)) n^{-( 1-c) /2}\to 0$ and $(\ln (2k-p))/n\to 0$.
\end{proof}


\begin{lemma}\label{lem:A4impliesA2andA3}
Assumption \ref{ass:Rates3} implies Assumption \ref{ass:Rates} with $\delta =1$ and Assumption \ref{ass:Rates2}.
\end{lemma}
\begin{proof} 
	First, we show that Assumption \ref{ass:Rates3} with constants $C$ and $c$ implies Assumption \ref{ass:Rates} for $\delta =1$ and with constants $\tilde{C}=C (\ln 2)^{-2}>0$ and $c$. Next, consider the following derivation.
\begin{eqnarray*}
( M_{n,3}^{3}) ^{2}( \ln ( 2k-p) ) ^{3/2} &=&( M_{n,3}^{3}) ^{2}( \ln ( ( 2k-p) n) ) ^{7/2}( \ln ( 2k-p) ) ^{3/2}( \ln ( ( 2k-p) n) ) ^{-7/2} \\
&\leq &Cn^{1/2-c}( \ln ( 2k-p) ) ^{3/2}( \ln ( 2k-p) +\ln n) ^{-7/2} \\
&\leq &Cn^{1/2-c}( \ln ( 2k-p) ) ^{-2} \\
&\leq &Cn^{1/2-c}(\ln 2)^{-2}= \tilde{C}n^{1/2-c},
\end{eqnarray*}
where the first equality and the third inequality follow from $2k-p>1$, the first inequality follows from Assumption \ref{ass:Rates3}, and the second inequality follows from $n\geq 1$.

Second, we show that Assumption \ref{ass:Rates3} with constants $C>0$ and $c\in ( 0,1/2) $ implies Assumption \ref{ass:Rates2} with constants $\tilde{C}= C (\ln 2)^{-5/2}>0$ and $c$. Consider the following derivation.
\begin{eqnarray*}
( B_{n}) ^{2}\ln ( 2k-p) &=&( B_{n}) ^{2}( \ln ( ( 2k-p) n) ) ^{7/2}\ln ( 2k-p) ( \ln ( ( 2k-p) n) ) ^{-7/2} \\
&\leq &Cn^{1/2-c}\ln ( 2k-p) ( \ln ( 2k-p) +\ln n) ^{-7/2} \\
&\leq &Cn^{1/2-c}( \ln ( 2k-p) ) ^{-5/2} \\
&\leq &Cn^{1/2-c}(\ln 2)^{-5/2}= \tilde{C}n^{1/2-c},
\end{eqnarray*}
where the first equality and the third inequality follow from $2k-p>1$, the first inequality follows from Assumption \ref{ass:Rates3}, and the second inequality follows from $n\geq 1$, and the last equality follows from definition of $\tilde{C}$.
\end{proof}


\begin{proof}[Proof of Lemma \ref{lem:LassoNoOverFit}]
As a preliminary step, we show that for any $r\in (0,1)$,
\begin{equation}
\cbr[2]{\{ J_{I}\not\subseteq \hat{J}_{L}\} \cap \{ \sup_{j=1,\dots ,p}\vert \hat{\sigma}_{j}/\sigma _{j}-1\vert \leq r/(1+r) \} }~\subseteq ~ \cbr[2]{ \sup_{j=1,\dots ,p}\vert \hat{\mu}_{j}-\mu _{j}\vert /\hat{\sigma}_{j}>\lambda _{n}(1-r)3/4}. \label{eq:lasso1}
\end{equation}
To show this, consider the following argument. Suppose that $j\in J_{I}$ and $j\not\in \hat{J}_{L}$, i.e., $\mu _{j}/\sigma _{j}\geq -\lambda _{n}3/{4}$ and $\hat{\mu}_{L,j}/\hat{\sigma}_{j}<-\lambda _{n}$ or, equivalently by Eq.\ \eqref{eq:ClosedForm2}, $\hat{\mu}_{j}/\hat{\sigma}_{j}<-\lambda _{n}{3} /{2}$. Then, $|\mu _{j}-\hat{\mu}_{j}|/\hat{\sigma}_{j}>\lambda _{n}({3 }/{2}-(\sigma _{j}/\hat{\sigma}_{j}){3}/{4})$. In turn, $\sup_{j=1,\dots ,p}|1-\hat{\sigma}_{j}/\sigma _{j}|\leq r/(1+r)$ implies that $|\sigma _{j}/ \hat{\sigma}_{j}-1|\leq r$ and so $\lambda _{n}({3}/{2}-(\sigma _{j}/\hat{\sigma}_{j}){3}/{4})\geq \lambda _{n}(1-r)3/4$. By combining these, we conclude that $\sup_{j=1,\dots ,p}|\hat{\mu}_{j}-\mu _{j}|/\hat{ \sigma}_{j}>\lambda _{n}(1-r)3/4$, as desired.

Then, consider the following derivation for any $r\in (0,1)$, 
\begin{eqnarray}
P(J_{I}\not\subseteq \hat{J}_{L}) &=&\left\{ 
\begin{array}{c}
P( \{J_{I}\not\subseteq \hat{J}_{L}\}\cap \{\sup_{j=1,\dots ,p}|\hat{
\sigma}_{j}/\sigma _{j}-1|\leq r/(1+r)\})  \\ 
+P( \{J_{I}\not\subseteq \hat{J}_{L}\}\cap \{\sup_{j=1,\dots ,p}|\hat{
\sigma}_{j}/\sigma _{j}-1|>r/(1+r)\}) 
\end{array}
\right\}   \notag \\
&\leq &P\del[2]{\sup_{j=1,\dots ,p}\vert \hat{\mu}_{j}-\mu _{j}\vert
/\hat{\sigma}_{j}>\lambda _{n}(1-r)3/4}+P\del[2]{\sup_{j=1,\dots ,p}\vert
\hat{\sigma }_{j}/\sigma _{j}-1\vert >r/(1+r) },\label{eq:lasso2}
\end{eqnarray}
where the inequality follows from Eq.\ \eqref{eq:lasso1}.

In the remainder of this proof, we want to consider Eq.\ \eqref{eq:lasso2} with $r=r_{n}\equiv (((\ln ( 2k-p) n^{-(1-c)/2}+(\ln ( 2k-p) )^{2})n^{-3/2})B_{n}^{2})^{-1}-1)^{-1}\in ( 0,1) $. By Lemma \ref{lem:A3implications}, $B_{n}^{2}\ln ( 2k-p) n^{-(1-c)/2}+B_{n}^{2}(\ln ( 2k-p) )^{2})n^{-3/2}\to 0$, and so $r_{n}\to 0$. Since $r_{n}\to 0$, we deduce that $ \exists \bar{n}\in \mathbb{N} $ s.t.\ $\forall n\geq \bar{n}$, $( 1-r_{n}) ( 4/3+\varepsilon ) \geq 4/3$, where $\varepsilon >0$ is as in Eq.\ \eqref{eq:LambdaConcrete}.

By evaluating Eq.\ \eqref{eq:step1_r_step} with $r=r_{n}$, we deduce that for all $n\geq \bar{n}$,
\begin{align}
&P(J_{I}\not\subseteq \hat{J}_{L})~\leq ~2p\exp (-2^{-1}n^{\delta /(2+\delta )}/M_{n,2+\delta }^{2})[1+K(M_{n,2+\delta }/n^{\delta /(2(2+\delta ))}+1)^{2+\delta }]+\tilde{K}n^{-c}\notag\\
&\leq 2\exp [ \ln (2k-p)( 1-n^{(2c+\delta -1)/(2+\delta)}/( 2C^{2/(2+\delta)}) ) ] [1+K( ( Cn^{-c+( 1-\delta ) /2}) ^{1/( 2+\delta ) }+1) ^{2+\delta }]+\tilde{K}n^{-c}
\to 0 ,\label{eq:lasso3}
\end{align}
where the first inequality holds by \citet[Lemma D.5, page 52]{chernozhukov/chetverikov/kato:2014c_supp} and Lemma \ref{lem:SampleBound2} with $ \gamma _{n}=( 1-r_{n}) \lambda _{n}3/4$  (which applies because $ \gamma _{n}\geq \gamma _{n}^{\ast }=n^{-1/2}(M_{n,2+\delta }^{2}n^{-\delta /(2+\delta )}-n^{-1})^{-1/2}$ when $n\geq \bar{n}$), and the second inequality follows from repeating the arguments in Lemma \ref{lem:SampleBound2}. Finally, note that by appropriately adjusting the constants $C$ and $\tilde{K}$, Eq.\ \eqref{eq:lasso3} can be extended to $n <\bar{n}$. 
\end{proof}

\begin{proof}[Proof of Lemma \ref{lem:LassoClosedForm}]
	Fix $j=1,\dots ,p$ arbitrarily. \citet[Eq.\ (2.5)]{buhlmann/vandegeer:2011} implies that the Lasso estimator in Eq.\ \eqref{eq:Lasso1} satisfies
	\begin{align}
	\hat{\mu}_{L,j} ~=~ \sgn(\hat{\mu}_{j}) \times \max \{|\hat{\mu}_j|-\hat{\sigma}_j{\lambda_n}/{2}, 0\}~~~\forall j=1,\dots,p. \label{eq:ClosedForm1}
	\end{align}
	
	To complete the proof, it suffices to show that
	\begin{equation}
	\{\hat{\mu}_{L,j}\geq -\hat{\sigma}_{j}\lambda _{n}\}~~=~~ \{\hat{\mu }_{j}\geq -3\hat{\sigma}_{j}\lambda _{n}/2\}. \label{eq:ClosedForm2}
	\end{equation}
	We divide the verification into four cases. First, consider that $\hat{\sigma }_{j}=0$. If so, $-\hat{\sigma}_{j}\lambda _{n}=-3\hat{\sigma} _{j}\lambda _{n}/2=0$ and $\hat{\mu}_{L,j}=sign( \hat{\mu} _{j}) \times \max \{ \vert \hat{\mu}_{j}\vert ,0\} =\hat{\mu}_{j}$, and so Eq.\ \eqref{eq:ClosedForm2} holds. Second, consider that $\hat{\sigma}_{j}>0$ and $\hat{\mu}_{j}\geq 0$. If so, $\hat{\mu}_{j}\geq 0\geq -3\hat{\sigma}_{j}\lambda _{n}/2$ and so the RHS condition in Eq.\ \eqref{eq:ClosedForm2} is satisfied. In addition, Eq.\ \eqref{eq:ClosedForm1} implies that $\hat{\mu}_{L,j}\geq 0\geq -\hat{\sigma} _{j}\lambda _{n}$ and so the LHS of condition in Eq.\ \eqref{eq:ClosedForm2} is also satisfied. Thus, Eq.\ \eqref{eq:ClosedForm2} holds. Third, consider that $\hat{\sigma}_{j}>0$ and $\hat{\mu}_{j}\in [ -\hat{\sigma} _{j}\lambda _{n}/2,0)$. If so, $\hat{\mu}_{j}\geq - \hat{\sigma}_{j}\lambda _{n}/2\geq -3\hat{\sigma}_{j}\lambda _{n}/2$ and so the RHS condition in Eq.\ \eqref{eq:ClosedForm2} is satisfied. In addition, Eq.\ \eqref{eq:ClosedForm1} implies that $\hat{\mu}_{L,j}=0\geq -\hat{\sigma}_{j}\lambda _{n}$ and so the LHS of condition in Eq.\ \eqref{eq:ClosedForm2} is also satisfied. Thus, Eq.\ \eqref{eq:ClosedForm2} holds. Fourth and finally, consider that $\hat{\sigma}_{j}>0$ and $\hat{\mu}_{j}<-\hat{\sigma}_{j}\lambda _{n}/2$. Then, Eq.\ \eqref{eq:ClosedForm1} implies that $\hat{\mu}_{L,j}=\hat{\mu}_{j}+\hat{\sigma} _{j}\lambda _{n}/2$ and so Eq.\ \eqref{eq:ClosedForm2} holds.
\end{proof}

\subsection{Results for the self-normalization approximation}

\begin{lemma} \label{lem:CSNincreasing}
For any $\pi \in (0,0.5]$, $n\in \mathbb{N}$, and $d\in \{0,1\dots ,2k-p\}$, define the function
\[
CV(d)\equiv \left\{
\begin{array}{ll}
0 & \text{if }d=0 ,\\
\frac{\Phi ^{-1}( 1-\pi /d) }{\sqrt{1-( \Phi ^{-1}( 1-\pi /d) ) ^{2}/n}} & \text{if }d>0.
\end{array}
\right.
\]
Then, $CV:\{0,1\dots, 2k-p\}\to \mathbb{R}_{+}$ is weakly increasing for all $n$ sufficiently large.
\end{lemma}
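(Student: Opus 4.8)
The plan is to reduce the claim to the monotonicity of a single scalar function and then to verify one delicate domain condition. First I would dispose of the jump from $d=0$ to $d=1$ separately. Since $\pi\le 0.5$ we have $1-\pi\ge 1/2$, so $\Phi^{-1}(1-\pi)\ge 0$ and therefore $CV(1)\ge 0=CV(0)$; this handles the transition out of the degenerate case.

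For $d\ge 1$ I would write $CV(d)=g(x_d)$, where $x_d\equiv \Phi^{-1}(1-\pi/d)$ and $g(x)\equiv x/\sqrt{1-x^2/n}$, and combine two monotonicity facts. The inner map $d\mapsto x_d$ is weakly increasing because $d\mapsto 1-\pi/d$ is increasing and $\Phi^{-1}$ is increasing; moreover $x_d\ge \Phi^{-1}(1-\pi)\ge 0$ for every $d\ge 1$. The outer map $g$ is strictly increasing on $[0,\sqrt{n})$: a direct differentiation gives $g'(x)=(1-x^2/n)^{-3/2}>0$ there. Composing a weakly increasing map with a strictly increasing one, $CV(d)=g(x_d)$ is weakly increasing in $d$, \emph{provided} every $x_d$ lies in the domain $[0,\sqrt{n})$ on which $g$ is defined and increasing.

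The one ingredient that requires work — and the source of the ``$n$ sufficiently large'' qualifier — is the domain condition $x_d<\sqrt{n}$ for all $d\le 2k-p$. As $x_d$ is increasing in $d$, it suffices to bound $x_{2k-p}=\Phi^{-1}(1-\pi/(2k-p))$. I would use the Gaussian tail inequality $1-\Phi(t)\le e^{-t^2/2}$ (already exploited elsewhere in the appendix), which upon inversion yields $\Phi^{-1}(1-q)\le \sqrt{2\ln(1/q)}$; taking $q=\pi/(2k-p)$ gives $x_{2k-p}^2\le 2\ln(2k-p)+2\ln(1/\pi)$. It then remains to show this is eventually strictly less than $n$, i.e.\ that $\ln(2k-p)=o(n)$, and here I would invoke the rate restriction in Assumption \ref{ass:Rates}, which controls exactly the joint growth of $M_{n,2+\delta}$ and of $2k-p$ relative to $n$.

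I expect the main obstacle to be precisely this domain condition rather than the two monotonicity computations, which are routine. The subtle point is that $M_{n,2+\delta}^{2+\delta}(\ln(2k-p))^{(2+\delta)/2}n^{-\delta/2}\to 0$ only translates into $\ln(2k-p)=o(n)$ once $M_{n,2+\delta}$ is known to be bounded away from zero; by the power-mean inequality one has $M_{n,2+\delta}\ge \sigma_j$, so care is needed to confirm that the variance structure in Assumption \ref{ass:Basic} together with Assumption \ref{ass:Rates} is strong enough to rule out $x_{2k-p}\ge\sqrt{n}$, which could otherwise occur if $2k-p$ were allowed to grow faster than $e^{\,o(n)}$. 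Once $\ln(2k-p)=o(n)$ is secured, the bound gives $x_{2k-p}^2<n$ for all large $n$, and the composition argument closes the proof.
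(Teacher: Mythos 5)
Your proof is correct and follows essentially the same route as the paper's: the same separate treatment of the $d=0$ to $d=1$ step via $\Phi^{-1}(1-\pi)\ge 0$, the same decomposition $CV(d)=g(\Phi^{-1}(1-\pi/d))$ with both maps weakly increasing, and the same use of the Gaussian tail bound plus Assumption \ref{ass:Rates} to verify that $\Phi^{-1}(1-\pi/(2k-p))<\sqrt{n}$ for large $n$. Your additional remark that Assumption \ref{ass:Rates} only yields $\ln(2k-p)=o(n)$ once $M_{n,2+\delta}$ is bounded away from zero is a fair point of care that the paper's own proof passes over silently.
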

\begin{proof}
First, we show that $CV(d)\leq CV(d+1)$ for $d=0$. To see this, use that $\pi \leq 0.5$ such that $\Phi ^{-1}( 1-\pi) \geq 0$, implying that $CV(1)\geq 0=CV(0)$.

Second, we show that $CV(d)\leq CV(d+1)$ for any $d>0$. To see this, notice that $CV(d)$ and $CV(d+1)$ are both the result of the composition $g_{1}(g_{2}(d )):\{ 1\dots ,2k-p\} \to \mathbb{R}$, where
\begin{eqnarray*}
	g_{1}(y) &\equiv&y/\sqrt{1-y^{2}/n}:[0,\sqrt{n})\to \mathbb{R}_{+} \\
	g_{2}(d) &\equiv&\Phi ^{-1}( 1-{\pi }/{d}) :\{ 1\dots ,2k-p\} \to \mathbb{R}.
\end{eqnarray*}
We first show that $g_{1}(g_{2}(d ))$ is properly defined by verifying that the range of $g_{2}$ is included in support of $g_{1}$. Notice that $g_{2}$ is an increasing function and so $ g_{2}(d)\in \lbrack g_{2}(1),g_{2}(2(k-p)+p)]=[\Phi ^{-1}( 1-\pi ) ,\Phi ^{-1}( 1-\pi /(2k-p)) ]$. For the lower bound, $\pi \leq 0.5$ implies that $\Phi ^{-1}( 1-\pi ) \geq 0$. For the upper bound, consider the following argument. On the one hand, $( 1-\Phi ( \sqrt{n} )) \leq \exp ( -n/2)/2$ holds for all $n$ large enough. On the other hand, Assumption \ref{ass:Rates} implies that $\exp(-n/2)/2\leq {\pi}/(2k-p)$ holds for all $n$ large enough. By combining these two, we conclude that $\Phi ^{-1}( 1-\pi /(2k-p)) \leq \sqrt{n}$ for all $n$ large enough, as desired. From here, the monotonicity of $CV(d)$ follows from the fact that $g_{1}$ and $ g_{2}$ are both weakly increasing functions and so $CV(d)=g_{1}(g_{2}(d)) \leq g_{1}(g_{2}(d+1))=CV(d+1)$.
\end{proof}

%
%

\begin{lemma}\label{lem:SNSize}
Assume Assumptions \ref{ass:Basic}-\ref{ass:Rates}, $\alpha \in (0,0.5)$, and that $H_{0}$ holds. For any non-stochastic set $L\subseteq \{1,\dots ,p\}$, define
\begin{eqnarray*}
	T_{n}(L) &\equiv &\max \left\{ \max_{j\in L}{\sqrt{n}\hat{\mu}_{j}}/{\hat{ \sigma}_{j}},\max_{s=p+1,\dots ,k}{\sqrt{n}\vert \hat{\mu} _{s}\vert }/{\hat{\sigma}_{s}} \right\} \\
c_{n}^{SN}(|L|,\alpha) &\equiv &\tfrac{\Phi ^{-1}( 1-\alpha /(2(k-p)+|L|)) }{\sqrt{1-( \Phi ^{-1}( 1-\alpha /(2(k-p)+|L|)) ) ^{2}/n}}.
\end{eqnarray*}
Then,
\begin{equation*}
	P(T_{n}(L)>c_{n}^{SN}(|L|,\alpha)) ~\leq~ \alpha +\alpha 2^{1+\delta }KCn^{-c+( 1-\delta ) /2}[ ( \ln ( 2k-p) ) ^{-( 2+\delta ) /2}+{2} ^{1/2}( 1-( \ln \alpha ) /( \ln ( 2k-p) ) ) ^{(2+\delta )/2}] \to \alpha,
\end{equation*}
where $K,C>0$ and $c>(1-\delta)/2$ are the universal constants in Lemma \ref{lem:SampleBound} and Assumption \ref{ass:Rates}.
\end{lemma}
\begin{proof}
	Under $H_{0}$, $\sqrt{n}\hat{\mu}_{j}/\hat{\sigma}_{j}\leq \sqrt{n}( \hat{\mu}_{j}-\mu _{j}) /\hat{\sigma}_{j}$ for $j\in L$ and $\sqrt{n} \vert \hat{\mu}_{s}\vert /\hat{\sigma}_{s}=\sqrt{n}\vert \hat{\mu}_{s}-\mu _{s}\vert /\hat{\sigma}_{s}$ for $s=p+1,\dots ,k$. From this, we deduce that
\begin{eqnarray*}
T_{n}(L) &=&\max \cbr[2]{ \max_{j\in L}{\sqrt{n}\hat{\mu}_{j}}/{\hat{ \sigma}_{j}},\max_{s=p+1,\dots ,k}{\sqrt{n}\vert \hat{\mu} _{s}\vert }/{\hat{\sigma}_{s}}} \\
&\leq & T_{n}^{\ast }(L) \equiv \max \cbr[2]{ \max_{j\in L}{\sqrt{n}( \hat{\mu}_{j}-\mu _{j}) }/{\hat{\sigma}_{j}},\max_{s=p+1,\dots ,k}{\sqrt{n} \vert \hat{\mu}_{s}-\mu _{s}\vert }/{\hat{\sigma}_{s}}} .
\end{eqnarray*}
For any $i=1,\dots ,n$ and $j=1,\dots ,k$, let $Z_{ij}\equiv (X_{ij}-\mu _{j})/\sigma _{j}$ and $U_{j}\equiv \sqrt{n}
\sum_{i=1}^{n} (Z_{ij}/n)/\sqrt{\sum_{i=1}^{n} (Z_{ij}^2/n)}$. It then follows that $\sqrt{n}[\hat{\mu}_{j}-\mu _{j}]/\hat{\sigma}_{j}=U_{j}/\sqrt{ 1-U_{j}^{2}/n}$, and so
\begin{eqnarray*}
\sqrt{n}(\hat{\mu}_{j}-\mu _{j})/\hat{\sigma}_{j} &=&U_{j}/\sqrt{ 1-|U_{j}|^{2}/n} \\
\sqrt{n}|\hat{\mu}_{j}-\mu _{j}|/\hat{\sigma}_{j} &=&|U_{j}|/\sqrt{ 1-|U_{j}|^{2}/n}.
\end{eqnarray*}
Notice that the expressions on the RHS are increasing in $U_{j}$ and $ |U_{j}|$, respectively. Therefore, for any ${a}\geq 0 $,
\begin{align}
\{ T_{n}^{\ast }(L)>{a}\} &=\cbr[2]{ \max_{j\in L}{\sqrt{n} ( \hat{\mu}_{j}-\mu _{j}) }/{\hat{\sigma}_{j}}>{a}} \cup \cbr[2]{ \max_{s=p+1,\dots ,k}{\sqrt{n}\vert \hat{\mu}_{s}-\mu _{s}\vert }/{\hat{\sigma}_{s}}>{a}}\notag \\
&=\cbr[2]{ \max_{j\in L}U_{j}/\sqrt{1-|U_{j}|^{2}/n}>{a}} \cup \cbr[2]{ \max_{s=p+1,\dots ,k}|U_{s}|/\sqrt{1-|U_{s}|^{2}/n}>{a}} \notag\\
&=\cbr[2]{ \max_{j\in L}U_{j}>{a}/\sqrt{1+{a}^{2}/n}} \cup \cbr[2]{ \max_{s=p+1,\dots ,k}|U_{s}|>{a}/\sqrt{1+{a}^{2}/n}} .
\label{eq:auxSNSize}
\end{align}
From here, we conclude that for all ${a}\geq 0 $ s.t.\
 ${a}/\sqrt{1+{a}^{2}/n}\in [0,n^{\delta /(2(2+\delta ))}M_{n,2+\delta }^{-1}]$,
\begin{eqnarray}
P(T_{n}(L)>{a}) &\leq& P(T_{n}^{\ast }(L)>{a}) \notag \\
&\leq& P\del[3]{ \cbr[2]{ \max_{j\in L}U_{j}>{a}/\sqrt{1+{a}^{2}/n}} \cup \cbr[2]{ \max_{s=p+1,\dots ,k}|U_{s}|>{a}/\sqrt{1+{a}^{2}/n}} }\notag  \\
&\leq& \sum_{j\in L}P\del[2]{ U_{j}>{a}/\sqrt{1+{a}^{2}/n}} +\sum_{s=p+1}^{k}P\del[2]{ |U_{s}|>{a}/\sqrt{1+{a}^{2}/n}} \notag \\
&\leq& \sum_{j\in L}P\del[2]{ U_{j}>{a}/\sqrt{1+{a}^{2}/n}} +\sum_{s=p+1}^{k}P\del[2]{U_{s}>{a}/\sqrt{1+{a}^{2}/n}}+\sum_{g=p+1}^{k}P\del[2]{-U_{g}>{a}/ \sqrt{1+{a}^{2}/n}} \notag \\
&\leq& (2(k-p)+|L|) \del[2]{1-\Phi ({a}/\sqrt{1+{a}^{2}/n})} \sbr[2]{1+Kn^{-\delta /2}M_{n,2+\delta }^{2+\delta }(1+{a}/\sqrt{1+{a}^{2}/n})^{2+\delta }} \label{eq:KeyBound},
\end{eqnarray}
where the first inequality follows from $T_{n}(L)\leq T_{n}^{\ast }(L)$, the second inequality follows from Eq.\ \eqref{eq:auxSNSize}, and the last inequality follows from the same argument as in Step 2 of Lemma \ref{lem:SampleBound} upon choosing $\gamma={a}/\sqrt{n}$ in that result.

We are interested in applying Eq.\ \eqref{eq:KeyBound} with ${a}=c_{n}^{SN}(|L|,\alpha)$ which satisfies
\begin{equation}
	(2(k-p)+|L|) \del[2]{1-\Phi ( c_{n}^{SN}(|L|,\alpha)/\sqrt{ 1+c_{n}^{SN}(|L|,\alpha)^{2}/n})}= \alpha.
	\label{eq:CoverageDefn}
\end{equation}
Before doing this, we need to verify that this is a valid choice, i.e., we need to verify that, for all sufficiently large $n$,
\begin{equation}
	c_{n}^{SN}(|L|,\alpha)/\sqrt{1+c_{n}^{SN}(|L|,\alpha)^{2}/n} 
	~~\in~~ [0,n^{\delta /(2(2+\delta ))}M_{n,2+\delta }^{-1}].
\end{equation}
On the one hand, note that $c_{n}^{SN}(|L|,\alpha )\geq 0$ implies that $ c_{n}^{SN}(|L|,\alpha )/\sqrt{1+c_{n}^{SN}(\alpha ,|L|)^{2}/n}\geq 0$. On the other hand, note that, by definition, $c_{n}^{SN}(|L|,\alpha )/\sqrt{ 1+c_{n}^{SN}(|L|,\alpha )^{2}/n}=\Phi ^{-1}(1-\alpha /(2(k-p)+|L|))$ and so it suffices to show that $\Phi ^{-1}(1-\alpha /(|L|+2(k-p)))M_{n,2+\delta }n^{-\delta /(2(2+\delta ))}\to 0$. To show this, consider the following argument.
\begin{align*}
\Phi ^{-1}(1-\alpha /(2(k-p)+|L|))M_{n,2+\delta }n^{-\delta /(2(2+\delta ))} &\leq \sqrt{2\ln ((|L|+2(k-p))/\alpha )}M_{n,2+\delta }n^{-\delta /(2(2+\delta ))} \\
&\leq \sqrt{2\ln ((2k-p)/\alpha )}M_{n,2+\delta }n^{-\delta /(2(2+\delta ))} \\
&=[ 2( 1- \ln \alpha  /\ln (2k-p)) ( (\ln (2k-p))^{( 2+\delta ) /2}M_{n,2+\delta }^{( 2+\delta ) }n^{-\delta /2}) ^{2/( 2+\delta ) }] ^{1/2} \\
&\leq ( 2( Cn^{-c+( 1-\delta ) /2}) ^{2/( 2+\delta ) }) ^{1/2}\to 0,
\end{align*}
for some $C>0$ and $c>( 1-\delta ) /2$, where the first inequality uses that $1-\Phi (t)\leq \exp (-t^{2}/2)$ for any $t>0$, the second inequality follows from $|L|\leq p$, the equality follows from $2k-p>1 $, and the last inequality follows from $2k-p>1$ and Assumption \ref{ass:Rates}. From here, consider the following derivation.
\begin{align*}
P(T_{n}>c_{n}^{SN}(|L|,\alpha )) 
&\leq \alpha +\alpha Kn^{-\delta /2}M_{n,2+\delta }^{2+\delta }(1+\Phi ^{-1}(1-\alpha /(2(k-p)+|L|)))^{2+\delta }\\
&\leq \alpha + \alpha Kn^{-\delta /2}M_{n,2+\delta }^{2+\delta }(1+\Phi ^{-1}(1-\alpha /(2k-p)))^{2+\delta }\\
&\leq\alpha + \alpha 2^{1+\delta }Kn^{-\delta /2}M_{n,2+\delta }^{2+\delta }(1+|\Phi ^{-1}(1-\alpha /(2k-p))|^{2+\delta }) \\
&\leq \alpha +\alpha 2^{1+\delta }Kn^{-\delta /2}M_{n,2+\delta }^{2+\delta }(1+{2}^{1/2}(\ln ((2k-p)/\alpha ))^{(2+\delta )/2}) \\
&=\alpha +\left\{
\begin{array}{c}
\alpha 2^{1+\delta }K[ ( \ln ( 2k-p) ) ^{( 2+\delta ) /2}M_{n,2+\delta }^{2+\delta }n^{-\delta /2}] \times \\ 
(( \ln ( 2k-p) ) ^{-( 2+\delta ) /2}+{2}^{1/2}( 1-( \ln \alpha ) /( \ln ( 2k-p) ) ) ^{(2+\delta )/2})
\end{array}
\right\}\\
&\leq\alpha + \alpha 2^{1+\delta }KCn^{-c+( 1-\delta ) /2}[ ( \ln ( 2k-p) ) ^{-( 2+\delta ) /2}+{2} ^{1/2}( 1-( \ln \alpha ) /( \ln ( 2k-p) ) ) ^{(2+\delta )/2}], 
\end{align*}
where the first inequality follows from Eq.\ \eqref{eq:KeyBound} with ${a}=c_{n}^{SN}(|L|,\alpha )$ and Eq.\ \eqref{eq:CoverageDefn} and the second inequality follows from $|L|\leq p$ and that $f(x)\equiv \Phi ^{-1}(1-\alpha /(2(k-p)+x))$ is increasing, the third inequality follows from $ (1+a)^{2+\delta }\leq 2^{1+\delta }(1+a^{2+\delta })$ for any $a>0$ (this follows from Jensen's Inequality and that $f(x)\equiv x^{2+\delta }$ is convex), the fourth inequality follows from $1-\Phi (t)\leq \exp (-t^{2}/2)$ for any $t>0$ and so $\Phi ^{-1}(1-\alpha /(2k-p))\leq \sqrt{2\ln ((2k-p)/\alpha )}$, and the fifth inequality follows from Assumption \ref{ass:Rates}.
\end{proof}

\begin{proof}[Proof of Theorem \ref{thm:SN1Scorollary}]
	This result follows from Lemma \ref{lem:SNSize} with $L=\{1,\dots ,p\}$.
\end{proof}

\begin{proof}[Proof of Theorem \ref{thm:Lasso2SSize}]
This proof follows similar steps than \citetalias{chernozhukov/chetverikov/kato:2014c} (Proof of Theorem 4.2). Define the following sequence of sets:
\begin{equation*}
	J_{I} ~\equiv~ \{j=1,\dots ,p:\mu _{j}/\sigma _{j}\geq -3\lambda _{n}/4 \}.
\end{equation*}
We divide the proof into two steps.

\underline{Step 1.} We show that $\hat{\mu}_{j}\leq 0$ for all $j\in J_{I}^{c}$ with high probability, i.e.,
\[
P\del[1]{ \cup _{j\in J_{I}^{c}}\{\hat{\mu}_{j}>0\}}
\leq 2\exp[\ln(2k-p)( 1-n^{c-(1-\delta )/2}/( 2C) ] [1+K( ( Cn^{-c+( 1-\delta ) /2}) ^{1/( 2+\delta ) }+1) ^{2+\delta }]+\tilde{K}n^{-c}
\to 0 ,\label{eq:lasso3b}
\]
where $K,\tilde{K},C>0$ and $c>( 1-\delta ) /2$ are universal constants.

First, we show that for any $r\in (0,1)$,
\[
 \cbr[1]{\cup _{j\in J_{I}^{c}}\{\hat{\mu}_{j}>0\}} \cap \cbr[2]{ \sup_{j=1,\dots ,p}\vert \hat{\sigma}_{j}/\sigma _{j}-1\vert \leq r/(1+r) } \subseteq \cbr[2]{ \sup_{j=1,\dots ,p}\vert \hat{\mu}_{j}-\mu _{j}\vert /\hat{\sigma }_{j}>(1-r)\lambda _{n}3/4 }.
\]
To see this, suppose that there is an index $j=1,\dots ,p$ s.t.\ $\mu _{j}/\sigma _{j}<-\lambda _{n}3/4 $ and $\hat{\mu}_{j}>0$. Then, $\vert \hat{\mu}_{j}-\mu _{j}\vert /\hat{\sigma}_{j}>\lambda_{n}(3/4) ( \sigma _{j}/\hat{\sigma}_{j}) $. In turn, $ \sup_{j=1,\dots ,p}\vert 1- \hat{\sigma}_{j}/\sigma _{j}\vert \leq r/(1+r)$ implies that $\vert 1- \sigma _{j}/\hat{\sigma} _{j}\vert \leq r$ and so $( \sigma _{j}/\hat{\sigma}_{j})\lambda _{n}3/4 \geq  ( 1-r)\lambda _{n}3/4 $. By combining these, we conclude that $ \sup_{j=1,\dots ,p}\vert \hat{\mu}_{j}-\mu _{j}\vert /\hat{\sigma }_{j}>( 1-r)\lambda _{n}(3/4) $.

Then, consider the following derivation for any $r\in (0,1)$,
\begin{align}
P( \cup _{j\in J_{I}^{c}}\{\hat{\mu}_{j}>0\})~&=~\left\{
\begin{array}{c}
P( \cup _{j\in J_{I}^{c}}\{\hat{\mu}_{j}>0\}\cap \sup_{j=1,\dots ,p}\vert \hat{\sigma}_{j}/\sigma _{j}-1\vert \leq r/(1+r)) + \\
P( \cup _{j\in J_{I}^{c}}\{\hat{\mu}_{j}>0\}\cap \sup_{j=1,\dots ,p}\vert \hat{\sigma}_{j}/\sigma _{j}-1\vert >r/(1+r))
\end{array}
\right\} \notag\\
~&\leq~ P\del[2]{\sup_{j=1,\dots ,p}\vert \hat{\mu}_{j}-\mu _{j}\vert /\hat{\sigma}_{j}> ( 1-r)\lambda _{n}3/4 } +
P\del[2]{ \sup_{j=1,\dots ,p}\vert \hat{\sigma }_{j}/\sigma _{j}-1\vert >r/(1+r)}.\label{eq:step1_r_step}
\end{align}
The result then follows from evaluating Eq.\ \eqref{eq:step1_r_step} with $r=r_{n}=(((n^{-( 1-c) /2}\ln p+n^{-3/2}( \ln p) ^{2}) B_{n}^{2}) ^{-1}-1)^{-1}\to 0$ and repeating the argument used in the proof of Lemma \ref{lem:LassoNoOverFit}.

\underline{Step 2.} We now complete the argument. Consider the following derivation.
\begin{align}
\left\{ \{ T_{n}>c_{n}^{SN,L}(\alpha) \}\cap \{J_{I}\subseteq \hat{J}_{L}\} \cap \{\cap _{j\in J_{I}^{c}} \{\hat{\mu}_{j}\leq 0\}\} \right\} &\subseteq \left\{ \{T_{n}>c_{n}^{SN}(|J_{I}|,\alpha )\} \cap \{\cap _{j\in J_{I}^{c}}\cbr[1]{\hat{\mu}_{j}\leq 0}\} \right\}\notag\\
&\subseteq \cbr[3]{\max \cbr[2]{ \max_{j\in J_{I}}\frac{\sqrt{n}\hat{\mu}_{j}}{\hat{ \sigma}_{j}},\max_{s=p+1,\dots ,k}\frac{\sqrt{n}\vert \hat{\mu} _{s}\vert }{\hat{\sigma}_{s}}}>c_{n}^{SN}(\alpha ,|J_{I}|)} ,\label{eq:step2_1}
\end{align}
where we have used $c_{n}^{SN,L}(\alpha) = c_{n}^{SN}(\alpha ,|\hat{J}_L|)$, Lemma \ref{lem:CSNincreasing} (in that $c_{n}^{SN}(\alpha ,d)$ is a non-negative increasing function of $d\in \{0,1\dots ,2k-p\}$), and we take $\max_{j\in J_{I}}\sqrt{n}\hat{\mu}_{j}/\hat{\sigma}_{j}=-\infty $ if $ J_{I}=\emptyset $. Thus,
\begin{align*}
&P( T_{n}>c_{n}^{SN,L}(\alpha)) =\left\{
\begin{array}{c}
P( \{ T_{n}>c_{n}^{SN,L}(\alpha)\} \cap \{ \{ J_{I}\subseteq \hat{J}_{L}\} \cap \{ \cap _{j\in J_{I}^{c}}\{\hat{\mu}_{j}\leq 0\}\} \} ) + \\
P( \{ T_{n}>c_{n}^{SN,L}(\alpha)\} \cap \{ \{ J_{I}\not\subseteq \hat{J}_{L}\} \cup \{ \cup _{j\in J_{I}^{c}}\{\hat{\mu}_{j}>0\}\} \} )
\end{array}
\right\} \\
&\leq 
P\del[3]{ \max \cbr[2]{ \max_{j\in J_{I}}\frac{\sqrt{n}\hat{\mu}_{j}}{\hat{ \sigma}_{j}},\max_{s=p+1,\dots ,k}\frac{\sqrt{n}\vert \hat{\mu} _{s}\vert }{\hat{\sigma}_{s}}} >c_{n}^{SN}(\alpha ,|J_{I}|)} + P( J_{I}\not\subseteq \hat{J}_{L}) +P( \cup _{j\in J_{I}^{c}}\{\hat{\mu}_{j}>0\}) \\
&\leq \alpha+\left\{ 
\begin{array}{c}
\alpha 2^{1+\delta }KCn^{-c+( 1-\delta ) /2}[ ( \ln ( 2k-p) ) ^{-( 2+\delta ) /2}+{2} ^{1/2}( 1-( \ln \alpha ) /( \ln ( 2k-p) ) ) ^{(2+\delta )/2}]+ \\ 
4\exp [ \ln (2k-p)( 1-n^{(2c+\delta -1)/(2+\delta)}/( 2C^{2/(2+\delta)}) ) ][1+K((Cn^{-c+(1-\delta)/2})^{1/(2+\delta )}+1)^{2+\delta }]+2\tilde{K}n^{-c}
\end{array}
\right\} , 
\end{align*}
where the first inequality follows from Eq.\ \eqref{eq:step2_1} and the second inequality follows from step 1 and Lemmas \ref{lem:SNSize} and \ref{lem:LassoNoOverFit}. 
\end{proof}


\subsection{Results for the bootstrap approximation}
\begin{lemma} \label{lem:BootSize}
Assume Assumptions \ref{ass:Basic}, \ref{ass:Rates3}, $\alpha \in (0,0.5)$, and that $H_{0}$ holds. For any non-stochastic set $L\subseteq \{1,\dots ,p\}$, define
\[
T_{n}(L) ~\equiv~ \max \cbr[2]{ \max_{j\in L}{\sqrt{n}\hat{\mu}_{j}}/{\hat{\sigma} _{j}},\max_{s=p+1,\dots ,k}{\sqrt{n}\vert \hat{\mu} _{s}\vert }/{\hat{\sigma}_{s}}},
\]
and let $c_{n}^{B}(L,\alpha)$ with $B\in \{MB,EB\}$ denote the conditional $(1-\alpha )$-quantile based on the bootstrap.
Then,
\[
P(T_{n}(L)>c_{n}^{B}(L,\alpha)) ~\leq~ \alpha +\tilde{C}n^{- \tilde{c}},
\]
where $\tilde{c},\tilde{C}>0$ are positive constants that only depend on the constants $c,C$ in Assumption \ref{ass:Rates3}. Furthermore, if $\mu={\bf 0}_{|L|}$, then
\[
\vert P( T_{n}(L)>c_{n}^{B}(L,\alpha)) -\alpha \vert ~\leq~ \tilde{C}n^{-\tilde{c}}.
\]
\end{lemma}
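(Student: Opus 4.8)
The plan is to reduce the claim to the pure moment-inequality case established in \citetalias{chernozhukov/chetverikov/kato:2014c} (Theorem 4.3) by encoding each of the $k-p$ moment equalities as a pair of opposing one-sided moment inequalities. First I would stack into a single random vector $\tilde X_i$ of dimension $q \equiv |L|+2(k-p)$ the coordinates $\{X_{ij}\}_{j\in L}$, $\{X_{is}\}_{s=p+1}^{k}$ and $\{-X_{is}\}_{s=p+1}^{k}$. Writing $\hat{\tilde\mu}_\ell$ and $\hat{\tilde\sigma}_\ell$ for the sample mean and sample standard deviation of the $\ell$-th coordinate, the elementary identity $\sqrt n |\hat\mu_s|/\hat\sigma_s = \max\{\sqrt n\hat\mu_s/\hat\sigma_s,\,-\sqrt n\hat\mu_s/\hat\sigma_s\}$, combined with the fact that $X_{is}$ and $-X_{is}$ share the same sample standard deviation, produces the exact identity
\[
T_n(L) ~=~ \max_{\ell=1,\dots,q}\ \sqrt{n}\,\hat{\tilde\mu}_\ell/\hat{\tilde\sigma}_\ell,
\]
so that $T_n(L)$ is literally a self-normalized maximum over $q$ purely one-sided moment inequalities. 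Since both bootstrap schemes act on the recentered summands, the statistics $W_n^{MB}(\theta,L)$ and $W_n^{EB}(\theta,L)$ admit the same stacking, and therefore $c_n^B(L,\alpha)$ equals the conditional $(1-\alpha)$-quantile of the corresponding one-sided bootstrap statistic for $\tilde X_i$.

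Next I would check that the augmented model satisfies the hypotheses of \citetalias{chernozhukov/chetverikov/kato:2014c} (Theorem 4.3). Under $H_0$ every stacked mean is nonpositive: the coordinates inherited from $L$ satisfy $\mu_j\le 0$, while each equality contributes the pair $(\mu_s,-\mu_s)=(0,0)$ because equalities bind under the null. The regularity requirements transfer verbatim: negation preserves all absolute moments, so the constants $M_{n,3}$, $M_{n,4}$ and $B_n$ for $\tilde X_i$ coincide with those for $X_i$; the stacked dimension obeys $q\le 2k-p$; and Assumption \ref{ass:Rates3}, being phrased through $\ln((2k-p)n)$, already supplies precisely the dimension-adjusted rate the stacked problem needs. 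Variances stay bounded away from zero by Assumption \ref{ass:Basic}, since $Var[-X_{is}]=\sigma_s^2>0$.

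With these conditions verified, applying \citetalias{chernozhukov/chetverikov/kato:2014c} (Theorem 4.3) to the $q$ stacked inequalities yields both displays simultaneously. Its general bound gives $P(T_n(L)>c_n^B(L,\alpha))\le\alpha+\tilde Cn^{-\tilde c}$. For the sharper two-sided bound, the extra hypothesis $\mu_j=0$ for all $j\in L$ forces \emph{every} stacked mean to vanish, so the equality part of Theorem 4.3 applies and delivers $|P(T_n(L)>c_n^B(L,\alpha))-\alpha|\le\tilde Cn^{-\tilde c}$. Uniformity in $\theta\in\Theta$ and across admissible $P$ is inherited because the constants $\tilde c,\tilde C$ returned by Theorem 4.3 depend only on $c,C$ of Assumption \ref{ass:Rates3}.

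The one step I expect to require genuine care is confirming that the augmented problem really lies inside the scope of \citetalias{chernozhukov/chetverikov/kato:2014c}'s Gaussian-approximation and bootstrap-consistency results. The negated equality coordinates are perfectly anti-correlated with their originals, producing a degenerate block in the covariance matrix of $\tilde X_i$; I would need to confirm that their high-dimensional central limit theorem and multiplier/empirical bootstrap comparison inequalities tolerate such singular correlation structure (they should, since that framework explicitly allows arbitrary, ``unstructured'' dependence and constrains only the per-coordinate moments), and that the resulting approximation constants can be taken uniformly. Everything else is a direct transcription through the stacking identity.
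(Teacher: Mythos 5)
Your proposal is correct and follows essentially the same route as the paper: the paper's proof also stacks the $|L|$ retained inequalities together with each equality written as a pair of opposite-signed inequalities into an augmented vector of dimension $|L|+2(k-p)$, rewrites $T_n(L)$ and the bootstrap statistics in that notation, and invokes \citetalias{chernozhukov/chetverikov/kato:2014c} (Theorem 4.3) directly. If anything, your write-up is more careful than the paper's, since you explicitly verify the transfer of the moment constants and flag the degenerate correlation induced by the negated coordinates, which the CCK14 framework indeed tolerates.
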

\begin{proof}
In the absence of moment equalities, this results follow from replacing $\{1,\dots ,p\}$ with $L$ in \citetalias{chernozhukov/chetverikov/kato:2014c} (proof of Theorem 4.3). As we show next, our proof follows from redefining the set of moment inequalities by adding the moment equalities as two sets of inequalities with reversed sign.

Define $A = A(L)\equiv  L\cup \{p+1,\dots ,k\}\cup \{k+1,\dots ,2k-p\}$ with $|A|=|L|+2(k-p)$ and for any $i=1,\dots ,n$, define the following $|A|$-dimensional auxiliary data vector
\[
X_{i}^{E}~\equiv~ \cbr[1]{ \{X_{ij}\}_{j\in L}^{\prime },\{X_{is}\}_{s=p+1,\dots ,k}^{\prime },\{-X_{is}\}_{s=p+1,\dots ,k}^{\prime }} ^{\prime }.
\]
Based on these definitions, we modify all expressions analogously, e.g.,
\begin{eqnarray*}
\mu ^{E} &=&\{ \{\mu _{j}\}_{j\in L}^{\prime },\{\mu _{s}\}_{s=p+1,\dots ,k}^{\prime },\{-\mu _{s}\}_{s=p+1,\dots ,k}^{\prime }\} ^{\prime }, \\
\sigma ^{E} &=&\{ \{\sigma _{j}\}_{j\in L}^{\prime },\{\sigma _{s}\}_{s=p+1,\dots ,k}^{\prime },\{\sigma _{s}\}_{s=p+1,\dots ,k}^{\prime }\} ^{\prime },
\end{eqnarray*}
and notice that $H_{0}$ can be equivalently re-written as $\mu ^{E}\leq \mathbf{0}_{|A|}$.

In the new notation, the test statistic is re-written as $T_{n}(L)=\max_{j\in A} {\sqrt{n}\hat{ \mu}_{j}^{E}}/{\hat{\sigma}_{j}^{E}}$, and the critical values can re-written analogously. In particular, the MB and EB test statistics are respectively defined as follows:
\begin{eqnarray*}
	W_{n}^{MB}(L)&=&\max_{j\in A} {\sqrt{n}\sum_{i=1}^{n}  \epsilon _{i}( X_{ij}^{E}-\hat{\mu}_{j}^{E}) }/{\hat{\sigma }_{j}^{E}},\\
	W_{n}^{EB}(L)&=&\max_{j\in A} {\sqrt{n}\sum_{i=1}^{n}  ( X_{ij}^{*,E}-\hat{\mu}_{j}^{E}) }/{\hat{\sigma }_{j}^{E}}.
\end{eqnarray*}
Given this setup, the result follows immediately from \citetalias{chernozhukov/chetverikov/kato:2014c} (Theorem 4.3).
\end{proof}

\begin{proof}[Proof of Theorem \ref{thm:B1Scorollary}.]
	This result follows from Lemma \ref{lem:BootSize} with $|L|=\{1,\dots ,p\}$.
\end{proof}


\begin{lemma}\label{lem:CvBootIncreasing}
For any $\alpha \in (0,0.5)$, $n\in \mathbb{N}$, $B\in \{MB,EB\}$, and $L_{1}\subseteq L_{2}\subseteq \{1,\dots ,p\}$,
\[
c_{n}^{B}(L_{1},\alpha)~\leq~ c_{n}^{B}(L_{2},\alpha ).
\]
Furthermore, under the above assumptions, $P(c_{n}^{B}(L_{1},\alpha)\geq 0)~\geq~ 1-\check{C}n^{-\check{c}}$, where $\check{c},\check{C}$ are universal constants.
\end{lemma}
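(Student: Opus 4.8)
The plan is to prove the two assertions separately, in each case working conditionally on the sample $X^n$ (which I suppress, following the appendix convention), so that every bootstrap probability $P^*$ and every conditional quantile is taken over the bootstrap randomness alone. The structural fact that drives both parts is that, for $B\in\{MB,EB\}$, the bootstrap statistic $W_n^B(L)$ is a maximum of terms indexed by $L$ together with a fixed block of terms indexed by the moment equalities $\{p+1,\dots,k\}$, so enlarging $L$ only enlarges the index set over which the maximum is taken.

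For the monotonicity claim I would observe that, since $L_1\subseteq L_2$, for every realization of the bootstrap variables one has $W_n^B(L_1)\leq W_n^B(L_2)$, because the maximum defining $W_n^B(L_2)$ runs over a superset of the terms defining $W_n^B(L_1)$. Hence the conditional law of $W_n^B(L_2)$ first-order stochastically dominates that of $W_n^B(L_1)$, i.e.\ $P^*(W_n^B(L_1)\leq t)\geq P^*(W_n^B(L_2)\leq t)$ for all $t$. Since the $(1-\alpha)$-quantile is monotone under first-order stochastic dominance, this gives $c_n^B(L_1,\alpha)\leq c_n^B(L_2,\alpha)$ for every sample realization, which is the desired deterministic inequality.

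For the nonnegativity claim I would first reduce to a single coordinate. Fixing any $j_0\in L_1$ (with the convention $c_n^B=0$ when $L_1=\emptyset$ and $k=p$), I have $W_n^B(L_1)\geq G_{j_0}^B$, where $G_{j_0}^{MB}\equiv \tfrac{1}{\sqrt n\hat\sigma_{j_0}}\sum_i\epsilon_i(X_{ij_0}-\hat\mu_{j_0})$ and $G_{j_0}^{EB}$ is its empirical-bootstrap analogue. It then suffices to show $P^*(G_{j_0}^B\geq 0)>\alpha$ on an event of probability at least $1-Cn^{-c}$: indeed this forces, for every $t<0$, $P^*(W_n^B(L_1)\leq t)\leq P^*(W_n^B(L_1)<0)\leq 1-P^*(G_{j_0}^B\geq 0)<1-\alpha$, so that $c_n^B(L_1,\alpha)=\inf\{t:P^*(W_n^B(L_1)\leq t)\geq 1-\alpha\}\geq 0$. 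Note also that if the model contains any moment equality then $W_n^B(L_1)\geq 0$ surely (an absolute-value term is nonnegative), so the critical value is nonnegative with probability one and only the case $k=p$ is substantive.

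For the multiplier bootstrap the coordinate bound is immediate and deterministic: conditionally on the data $G_{j_0}^{MB}\sim N(0,1)$ exactly, since its conditional variance equals $\hat\sigma_{j_0}^{-2}\,n^{-1}\sum_i(X_{ij_0}-\hat\mu_{j_0})^2=1$, whence $P^*(G_{j_0}^{MB}\geq 0)=\tfrac12>\alpha$ because $\alpha\in(0,0.5)$. The empirical bootstrap is the only place a high-probability qualifier is needed, and it is the main obstacle: $G_{j_0}^{EB}$ has conditional mean $0$ and variance $1$ but need not be symmetric. Here I would apply a Berry--Esseen bound to the bootstrap sample mean to obtain $|P^*(G_{j_0}^{EB}\geq 0)-\tfrac12|\leq C'n^{-1/2}\,n^{-1}\sum_i|(X_{ij_0}-\hat\mu_{j_0})/\hat\sigma_{j_0}|^{3}$, and then control the empirical third absolute moment on exactly the moment-control event used in \citetalias{chernozhukov/chetverikov/kato:2014c} and in the proof of Lemma \ref{lem:BootSize} under Assumption \ref{ass:Rates3}, which has probability at least $1-Cn^{-c}$. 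On that event the right-hand side is $o(1)$, hence eventually strictly below $\tfrac12-\alpha$, giving $P^*(G_{j_0}^{EB}\geq 0)>\alpha$ and completing the argument.
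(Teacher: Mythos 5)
Your proof is correct, and while the first two-thirds of it coincides with the paper's argument, your treatment of the empirical bootstrap is a genuinely different route. The monotonicity step is identical: $L_{1}\subseteq L_{2}$ gives $W_{n}^{B}(L_{1})\leq W_{n}^{B}(L_{2})$ realization by realization, hence ordered conditional quantiles. For MB the paper also argues from conditional Gaussianity (it phrases this in terms of the whole max of zero-mean Gaussians rather than a single coordinate, but the symmetry point is the same), so that part is essentially shared. The divergence is in the EB case. The paper never touches the bootstrap distribution directly: it invokes \citetalias{chernozhukov/chetverikov/kato:2014c}'s Eq.\ (66), which says that $c_{n}^{EB}(L_{1},\alpha)$ dominates the Gaussian-analogue quantile $c_{0}(L_{1},\alpha+\gamma_{n})$ with probability at least $1-Cn^{-c}$, and then notes that this Gaussian quantile is nonnegative once $\alpha+\gamma_{n}<0.5$. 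You instead reduce to a single coordinate and run a conditional Berry--Esseen bound, showing $P^{*}(G_{j_0}^{EB}\geq 0)$ is within $O(M_{n,3}^{3}n^{-1/2})$ of $1/2$; under Assumption \ref{ass:Rates3} this error is $O(n^{-1/4-c/2})$, so the same polynomial rate emerges. The paper's route reuses, at no extra cost, the Gaussian-comparison machinery already needed for Lemma \ref{lem:BootSize}; yours is more elementary and makes transparent that only approximate symmetry of one coordinate is needed, at the price of having to verify the empirical third-moment (and $\hat\sigma_{j}/\sigma_{j}$) control separately --- that control is indeed available from \citetalias{chernozhukov/chetverikov/kato:2014c}'s auxiliary lemmas, but you should cite the specific bound rather than gesture at ``the moment-control event.'' One small point worth making explicit: your Berry--Esseen deficit falls below $1/2-\alpha$ only for $n$ large, so the conclusion holds eventually; this is harmless because the universal constant $C$ in $1-Cn^{-c}$ can be taken large enough to make the bound vacuous for small $n$, but the paper's own proof has the same ``for $n$ sufficiently large'' caveat and you should state yours.
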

\begin{proof}
By definition, $L_{1}\subseteq L_{2}$ implies that $W_{n}^{B}(L_{1})\leq W_{n}^{B}(L_{2})$ which, in turn, implies $c_{n}^{B}(L_{1},\alpha)\leq c_{n}^{B}(L_{2},\alpha)$. 

We now turn to the second result. If the model has at least one moment equality, then $W_n^B(L_{1})\geq 0$ and so $c_{n}^{B}(\alpha ,L_{1})\geq 0$. If the model has no moment equalities, then we consider a different argument depending on the type of bootstrap procedure being implemented.

First, consider MB. Conditionally on the sample, $W_{n}^{MB}(L_{1})=\max_{j\in L}{(1/\sqrt{n})\sum_{i=1}^{n} \epsilon _{i}\left( X_{ij}-\hat{\mu}_{j}\right)  }/{\hat{\sigma}_{j}}$ is the maximum of $L_{1}$ zero mean Gaussian random variables. Thus, $\alpha \in (0,0.5)$ implies that $c_{n}^{MB}(\alpha ,L_{1})\geq 0$. 

Second, consider EM. Let $c_{0}(L_{1},\alpha)$ denote the $(1-\alpha)$-quantile of $ \max_{j \in L_{1}}Y_{j}$ with $\{Y_{j}\}_{j \in L_{1}}\sim N(\mathbf{0},E [ \tilde{Z}\tilde{Z}'] )$ with $\tilde{Z} = \{Z_j\}_{j \in L_{1}}$ and $Z$ as in Assumption \ref{ass:Basic}. We then apply \citet[Eq.\ (86)]{chernozhukov/chetverikov/kato:2014c_supp} to our hypothetical model with the moment inequalities indexed by $L_{1}$, which yields
\begin{equation}
	P\del[1]{c_{n}^{EB}(L_{1},\alpha)\geq c_0(L_{1},\alpha +\gamma_{n})}~\geq~ 1-\check{C}n^{-\check{c}},\label{eq:86inCCK}
\end{equation}
where $\gamma_{n} \equiv \zeta_{n2}+\nu_{n}+8\zeta_{n1}\sqrt{\log p} \in (0, 2\check{C}n^{-\check{c}})$, for sequences $\{(\zeta_{n1},\zeta_{n2},\nu_{n})\}_{n\ge 1}$ and universal positive constants $(\check{c},\check{C})$, all specified in \cite{chernozhukov/chetverikov/kato:2014c_supp}. Since $\alpha<0.5$ and $\gamma_{n}<2\check{C}n^{-\check{c}}$, it follows that for all $n$ sufficiently large, $\alpha + \gamma_{n}<0.5$ and so $c_0(\alpha + \gamma_{n},L_{1})>0$. The desired result follows from combining this with Eq.\ \eqref{eq:86inCCK}.
\end{proof}


\begin{proof}[Proof of Theorem \ref{thm:Lasso2SBootstrap}]
	This proof follows similar steps than \citetalias{chernozhukov/chetverikov/kato:2014c} (Proof of Theorem 4.4). Define the following sequence of sets:
	\begin{equation*}
		J_{I} ~\equiv~ \{j=1,\dots ,p:\mu _{j}/\sigma _{j}\geq -3\lambda _{n}/4 \}
	\end{equation*}
	We divide the proof into two steps. The first step is exactly as in the proof of Theorem \ref{thm:Lasso2SSize} so it is omitted.

\underline{Step 2.} Define $T_{n}(J_{I})$ as in Lemma \ref{lem:BootSize} and consider the following derivation.
\begin{align}
&\{T_{n}>c_{n}^{B}(\hat{J}_L,\alpha)\}~\cap~  \{J_{I}\subseteq \hat{J}_{L}\}~\cap~ \{\cap _{j\in J_{I}^{c}}\{\hat{\mu}_{j}\leq 0\}\}~\cap~\{c_{n}^{B}(J_I,\alpha)\geq 0\}\notag\\
&\subseteq
\{T_{n}>c_{n}^{B}(J_{I},\alpha)\}~\cap~\{\cap _{j\in J_{I}^{c}}\{\hat{\mu}_{j}\leq 0\}\}~\cap~\{c_{n}^{B}(\alpha ,J_I)\geq 0\}\notag\\
&\subseteq 
\{T_n(J_I)>c_{n}^{B}(J_{I},\alpha)\} , \label{eq:step2_2}
\end{align}
where the first inclusion follows from Lemma \ref{lem:CvBootIncreasing}, and the second inclusion follows from the fact that ${\cap _{j\in J_{I}^{c}}\{\hat{\mu}_{j}\leq 0}\}$ and $\{T_{n}>c_{n}^{B}(J_{I},\alpha)\geq 0\}$ implies that $\{T_n(J_I)>c_{n}^{B}(J_{I},\alpha)\}$. Thus,
\begin{align}
&P\del[1]{T_{n}>c_{n}^{B,L}(\alpha)} = P\del[1]{T_{n}>c_{n}^{B}(\hat{J}_{L},\alpha)} \notag\\
 &=\cbr[4]{
\begin{array}{c}
P\del[1]{ \{ T_{n}>c_{n}^{B}(\hat{J}_{L},\alpha)\} \cap \{ \{ J_{I}\subseteq \hat{J}_{L}\} \cap \{ \cap _{j\in J_{I}^{c}}\{\hat{\mu}_{j}\leq 0\}\} \cap\cbr[0]{c_{n}^{B}(\alpha ,J_I)\geq 0}\} } + \\
P\del[1]{ \{ T_{n}>c_{n}^{B}(\hat{J}_{L},\alpha)\} \cap \{ \{ J_{I}\not\subseteq \hat{J}_{L}\} \cup \{ \cup _{j\in J_{I}^{c}}\{\hat{\mu}_{j}>0\}\} \cup\cbr[0]{c_{n}^{B}(\alpha ,J_I)< 0}\} }
\end{array}
}   \notag \\
&\leq P( T_{n}(J_{I})>c_{n}^{B}(J_{I},\alpha)) +P( J_{I}\not\subseteq \hat{J}_{L}) +P( \cup _{j\in J_{I}^{c}}\{\hat{ \mu}_{j}>0\})+P(c_{n}^{B}(\alpha ,J_I)< 0) \nonumber \\
&\leq \alpha + \left\{
\begin{array}{c}
\check{C}n^{-\check{c}}+ \tilde{C} n^{-\tilde{c}} +2\tilde{K}n^{-c}+\\
4\exp [ \ln (2k-p)( 1-n^{(2c+\delta -1)/(2+\delta)}/( 2C^{2/(2+\delta)}) ) ] [1+K( ( Cn^{-c+( 1-\delta ) /2}) ^{1/( 2+\delta ) }+1) ^{2+\delta }]
\end{array}\right\}\to \alpha,  \label{eq:Derivation2SMBLasso}
\end{align}
where the first inequality follows from Eq.\ \eqref{eq:step2_2} and the second inequality follows from the step 1 and Lemmas \ref{lem:LassoNoOverFit}, \ref{lem:BootSize}, and \ref{lem:CvBootIncreasing}. Finally, we note that the convergence in the last line holds uniformly in the manner required by the result.

We next turn to the second part of the result. By the case under consideration, $\mu={\bf 0}_{p}$ and so $J_{I} = \{ 1, \dots, p \}$. Thus, in this case, $\{J_{I} \subseteq \hat{J}_{L}\} = \{ \hat{J}_{L} = J_{I} = \{1,\dots,p\}\}$. By this and Lemma \ref{lem:LassoNoOverFit}, it follows that
\begin{align}
	&P\del[1]{ \hat{J}_{L} = J_{I} = \{1,\dots,p\}} \notag\\
	&\geq 1 -2\exp [ \ln (2k-p)( 1-n^{(2c+\delta -1)/(2+\delta)}/( 2C^{2/(2+\delta)}) ) ] [1+K( ( Cn^{-c+( 1-\delta ) /2}) ^{1/( 2+\delta ) }+1) ^{2+\delta }] -\tilde{K}n^{-c}.
	\label{eq:mehmet1}
\end{align}
In turn, consider the following derivation.
\begin{align}
P ( T_n > c_n^{B,L} (\alpha) ) =
P ( T_n > c_n^{B,1S} (\alpha) )\geq
 P ( T_n > c_n^{B,1S} ( \alpha) ) - P ( \{\hat{J}_{L} = J_{I} = \{1,\dots,p\}\}^{c}) \geq
 \alpha - 2\tilde{C} n^{- \tilde{c}},\label{eq:Derivation2SMBLasso2}
\end{align}
where the first equality follows from the fact that $\{\hat{J}_{L} = J_{I} = \{1,\dots,p\} \}$ implies that $c_n^{B,1S}  (\alpha) = c_n^{B} ({J}_{I},\alpha) = c_n^{B} (\hat{J}_{L},\alpha) = c_n^{B,L} (\alpha)$, and the last inequality follows from the second result in Theorem \ref{thm:B1Scorollary} and Eqs.\ \eqref{eq:RestrictionOnParams} and \eqref{eq:mehmet1}. By combining Eqs.\ \eqref{eq:RestrictionOnParams}, \eqref{eq:Derivation2SMBLasso}, and \eqref{eq:Derivation2SMBLasso2}, the desired result follows.
\end{proof}

\subsection{Results for power comparison}

\begin{proof}[Proof of Theorem  \ref{thm:powercompSN}] This result has several parts. 
	
	\underline{Part 1.} First, note that Eq.\ \eqref{eq:JComparisonSN} implies Eq.\ \eqref{eq:PowerAdvSN2}. Second, note that the arguments in the main text show that Eq.\ \eqref{eq:PowerAdvSN2} occurs if and only if $ c_{n}^{SN,L}(\theta,\alpha)~\leq~ c_{n}^{SN,2S}(\alpha)$ occurs. Finally, note that $ c_{n}^{SN,L}(\theta,\alpha)~\leq~ c_{n}^{SN,2S}(\alpha)$ implies Eq.\ \eqref{eq:PowerComparisonSN}.
	
	\underline{Part 2.} By definition and Lemma \ref{lem:LassoClosedForm},
\begin{align*}
\hat{J}_{SN} &=\{j=1,\dots,p:\hat{\mu}_{j}/\hat{\sigma}_{j}\geq -2c_{n}^{SN,1S}(\beta _{n})/\sqrt{n}\} \\
\hat{J}_{L} &=\{j=1,\dots,p:\hat{\mu}_{j,L}/\hat{\sigma}_{j}\geq -\lambda _{n}\}=\{j=1,\dots ,p:\hat{\mu}_{j}/\hat{\sigma}_{j}\geq -\lambda _{n}3/2\}.
\end{align*}
To complete the proof, it suffices to show that $\hat{J}_{L}\not\subseteq \hat{J}_{SN}$ implies that Eq.\ \eqref{eq:highlevel} does not hold. Suppose $\hat{J}_{L}\not\subseteq \hat{J}_{SN}$ holds. By the previous display, $ \exists j=1,\dots ,p$ s.t.\ $j\in \hat{J}_{L}\cap \hat{J}_{SN}^{c}$, i.e., $-2c_{n}^{SN,1S}(\beta _{n})/\sqrt{n}>{\hat{\mu}_{j}}/{\hat{\sigma}_{j}}\geq - \lambda _{n}3/2$, which implies that $c_{n}^{SN,1S}(\beta _{n})/\sqrt{n}< \lambda _{n}3/4$. By combining this with Eqs.\ \eqref{eq:LambdaConcrete} and \eqref{eq:CVSN_oracle}, we contradict Eq.\ \eqref{eq:highlevel}, as desired.

\end{proof}

\begin{proof}[Proof of Theorem  \ref{thm:power_compB}]
	This result has several parts.

	\underline{Part 1:} This part of the proof is analogous to the the proof of part 1 in Theorem \ref{thm:powercompSN}.
	
	\underline{Part 2:} By definition and Lemma \ref{lem:LassoClosedForm},
		\begin{align*}
		\hat{J}_{B} &~=~\{ j=1,\dots ,p:\hat{\mu}_{j}/\hat{\sigma}_{j}\geq -2c_{n}^{B,1S}( \beta _{n}) /\sqrt{n}\} , \\
		\hat{J}_{L} &~=~\{ j=1,\dots ,p:\hat{\mu}_{j,L}/\hat{\sigma}_{j}\geq -\lambda _{n}\}~=~\{j=1,\dots ,p:\hat{\mu}_{j}/\hat{\sigma}_{j}\geq -\lambda _{n}3/2\}.
		\end{align*}
	It is convenient to divide the remainder of this proof into three steps.
		
	\underline{Part 2.1:} Show that
\begin{equation}
	\{ c_{n}^{B}( \beta _{n}) 4/3\geq \lambda _{n}\sqrt{n} \}  ~\subseteq~ \{ \hat{J}_{L}\subseteq \hat{J}_{B}\}.
	\label{eq:quantiles1}
\end{equation}
	To show Eq.\ \eqref{eq:quantiles1}, suppose that $\{\hat{J}_{L}\subseteq \hat{J}_{B}\}$ does not occur, i.e., $ \exists j\in \hat{J}_{L}\cap \hat{J}_{B}^{c}$ s.t.\ $-2c_{n}^{B}( \beta _{n}) /\sqrt{n}>\hat{\mu}_{j}/\hat{\sigma} _{j}\geq -\lambda _{n}3/2$. From here, we conclude that $\{c_{n}^{B}( \beta _{n}) 4/3< \lambda _{n}\sqrt{n}\}$, as desired.
	
	\underline{Part 2.2:} Under either one of our sufficient conditions, show that
	\begin{equation}
		\{ c_{n}^{B}( \beta _{n}) \geq c_{0}(3\beta _{n})\} ~\subseteq~ \{ c_{n}^{B}( \beta _{n}) 4/3\geq \lambda _{n}\sqrt{n}\},
		\label{eq:quantiles2}
	\end{equation}
	where $c_{0}(3\beta _{n})$ denotes the $(1-3\beta _{n})$-quantile of $ \max_{1\leq j\leq p}Y_{j}$ with $(Y_{1},\ldots ,Y_{p})\sim N(\mathbf{0},E [ ZZ'] )$ with $Z$ as in Assumption \ref{ass:Basic}.
	
	To this end, we consider two strategies. The first strategy relies on Eq.\ \eqref{eq:2Ssuff} and the second strategy relies on Eq.\ \eqref{eq:2Ssuff2}. We begin with the first strategy. First, note that $ \max_{1\leq j\leq p}Y_{j}\geq Y_1$ implies that
	\begin{equation}
	c_{0}(3\beta _{n})~\geq~ \Phi ^{-1}(1-3\beta _{n}).  \label{eq:P2_B}
	\end{equation}
	Second, note that Eqs.\ \eqref{eq:LambdaConcrete} and \eqref{eq:2Ssuff} imply that
	\begin{align}
	\Phi ^{-1}( 1-3\beta _{n}) ~\geq~ ( 1+3\varepsilon /4) ( M_{n,2+\delta }^{2}n^{-\delta
/( 2+\delta ) }-n^{-1}) ^{-1/2} = \sqrt{n} \lambda _{n}3/4
\label{eq:P2_c}
	\end{align}
	By combining Eqs.\ \eqref{eq:P2_B} and \eqref{eq:P2_c}, the implication in Eq.\ \eqref{eq:quantiles2} follows.
	
	We now develop the second strategy. The Borell-Cirelson-Sudakov inequality (see, e.g., \citet[Theorem 5.8]{boucheron/lugosi/massart:2013}) implies that for $x\geq 0$,
	\begin{equation}
	P\del[2]{\max_{1\leq j\leq p}Y_{j}~\leq~ E[ \max_{1\leq j\leq p}Y_{j} ] -x} ~\leq~ \exp({-x^{2}/2}),
	\label{eq:Piece0}
	\end{equation}
	where we used that the diagonal $E[ZZ']$ is a vector of ones. Equating the RHS of Eq.\ \eqref{eq:Piece0} to $(1-3\beta _{n})$ yields $x=\sqrt{ 2\log (1/(1-3\beta _{n}))}$ such that
	\begin{equation}
	c_{0}(3\beta _{n})~\geq~ E[ \max_{1\leq j\leq p}Y_{j}] -\sqrt{2\log (1/[1-3\beta _{n}])}. \label{eq:Piece1}
	\end{equation}
	We now provide a lower bound for the first term on the RHS of Eq.\ \eqref{eq:Piece1}. Consider the following derivation:
	\begin{eqnarray}
	E[ \max_{1\leq j\leq p}Y_{j}] ~\geq~ \min_{i\neq j}\sqrt{ E(Y_{i}-Y_{j})^{2}\log (p)/2} ~\geq ~\sqrt{2(1-\rho )\log (p)/2},\label{eq:Piece2}
	\end{eqnarray}
	where the first inequality follows from Sudakov's minorization inequality (see, e.g., \citet[Theorem 13.4]{boucheron/lugosi/massart:2013}), while the second inequality follows from $E[ZZ']$ having  diagonal elements equal to one and maximal absolute correlation less than $\rho $. Eqs.\ \eqref{eq:Piece1} and \eqref{eq:Piece2} imply that
	\begin{equation}
	c_{0}(3\beta _{n})~\geq~ \sqrt{(1-\rho )\log (p)/2}-\sqrt{2\log (1/(1-3\beta _{n}))}. \label{eq:P2_C}
	\end{equation}
	Combine this with Eqs.\ \eqref{eq:LambdaConcrete} and \eqref{eq:2Ssuff2} to conclude that
	\begin{align*}
	c_0(3\beta_n)\geq\sqrt{( 1-\rho ( \theta ) ) \ln (p)/2}-\sqrt{2\ln
( 1/( 1-3\beta _{n}) ) }\geq ( 1+3\varepsilon
/4) ( M_{n,2+\delta }^{2}n^{-\delta /( 2+\delta )
}-n^{-1}) ^{-1/2}  = \sqrt{n} \lambda _{n}3/4,
	\end{align*}
	and the implication in Eq.\ \eqref{eq:quantiles2} follows.
	
	\underline{Part 2.3:} Use previous results to complete the argument. By combining Eqs.\ \eqref{eq:quantiles1}, \eqref{eq:quantiles2}, and the fact that the function $c_{0}(\cdot )$ is decreasing, we deduce that for any $\mu _{n}\leq 3\beta _{n}$,
	\[
	\{c_{n}^{B}( \beta _{n}) \geq c_{0}(\mu _{n})\} ~\subseteq~\{ c_{n}^{B}( \beta _{n}) \geq c_{0}(3\beta _{n})\} ~\subseteq~ \{ c_{n}^{B}( \beta _{n}) 4/3\geq \lambda _{n}\sqrt{n}\} ~\subseteq~ \{ \hat{J}_{L}\subseteq \hat{J} _{B}\} .
	\]
	To complete the argument, it then suffices to show that for some $\mu _{n}\leq 3\beta _{n}$,
	\begin{equation*}
		P(c_{n}^{B}( \beta _{n}) \geq c_{0}(\mu _{n}) ) ~\geq~ 1- Cn^{-c}.
	\end{equation*}
	To establish this result, we use \citet[Eq.\ (86)]{chernozhukov/chetverikov/kato:2014c_supp} evaluated at $\alpha =\beta _{n}$, $\nu _{n}=Cn^{-c}$, and $(\zeta _{n2},\zeta _{n1})$ s.t.\ $\zeta _{n2}+8\zeta _{n1}\sqrt{\ln p}\leq Cn^{-c}$. Under our assumptions, note that $\mu _{n}\equiv \beta _{n}+\zeta _{n2}+v_{n}+8\zeta _{n1}\sqrt{\log p}\leq \beta _{n}+2Cn^{-c}\leq 3\beta _{n}$, as required.

	\underline{Part 3:} Consider the following argument.
	\begin{align*}
		P(T_{n}\geq c_{n}^{B,2S}( \alpha ) ) &=P(
		T_{n}\geq c_{n}^{B,2S}( \alpha ) \cap \hat{J}_{L}\subseteq \hat{J}_{B}) +P(T_{n}\geq c_{n}^{B,2S}( \alpha )\cap \hat{J}_{L}\not\subseteq \hat{J}_{B})\\
		&\leq  P(T_{n}\geq c_{n}^{B,L}( \alpha )) +P(\hat{J}_{L}\not\subseteq \hat{J}_{B})  \\
		&\leq P(T_{n}\geq c_{n}^{B,L}( \alpha )) + {C}n^{-{c}},
	\end{align*}
	where the first inequality uses Part 1, and the second inequality uses that \eqref{eq:2Ssuff} and \eqref{eq:2Ssuff2} both individually imply Eq.\ \eqref{eq:JComparisonBstock}.
\end{proof}

\end{small}

\bibliography{BIBLIOGRAPHY}

\end{document}